\providecommand{\U}[1]{\protect\rule{.1in}{.1in}}
\newtheorem{theorem}{Theorem}
\newtheorem{acknowledgement}[theorem]{Acknowledgement}
\newtheorem{corollary}[theorem]{Corollary}
\newtheorem{definition}[theorem]{Definition}
\newtheorem{lemma}[theorem]{Lemma}
\newtheorem{proposition}[theorem]{Proposition}
\newtheorem{remark}[theorem]{Remark}
\newenvironment{proof}{\paragraph{\textcolor{black}{Proof:}}}{\hfill\textcolor{black}{$\square$}}
\begin{document}
\title{Existence of a maximal solution of singular parabolic equations with absorptions:  quenching phenomenon and the instantaneous shrinking phenomenon }
\author{Nguyen Anh Dao\\ Faculty of Mathematics and Statistics,\\ Ton Duc Thang University, Ho Chi Minh City, Vietnam
\\
daonguyenanh@tdt.edu.vn}

\maketitle
\tableofcontents
\bigskip
\textbf{Abstract.} This paper deals with nonnegative solutions of the one dimensional degenerate parabolic equations  with zero homogeneous Dirichlet boundary condition.  To obtain an existence result, we prove a sharp gradient estimate of $|u_x|$. 
 Besides, we investigate the behaviors of nonnegative solutions such as the quenching phenomenon, and the finite speed of propagation.
 Our results of the Dirichlet problem will be extended to the associated Cauchy problem. In addition, we show that the phenomenon of the instantaneous shrinking of compact support of the nonnegative solutions occurs if $f$ satisfies some growth condition.
\medskip

\noindent\textbf{Mathematics Subject Classification (2000):} 35K55, 35K65, 35B99.

\noindent Key words: \textit{gradient estimates, quenching type of parabolic
equations, irregular initial datum, free
boundary, instantaneous shrinking of compact support.}

\section{Introduction}

\hspace{0.2in} In this paper, we study the nonnegative solutions of the one dimensional degenerate parabolic equation  on a
given open bounded interval $I=(-l,l)$
\begin{equation}\label{plap1}
\left\{
\begin{array}
[c]{lr}%
\partial_{t}u-(|u_{x}|^{p-2}u_{x})_{x}+u^{-\beta}\chi_{\{u>0\}}  + f(u)=0 &
\text{in}\hspace{0.05in}I\times(0,\infty),\\
u(-l,t)=u(l,t)=0 & t\in(0,\infty),\\
u(x,0)=u_{0}(x) & \hspace{0.05in}\text{in}\hspace{0.05in}I,
\end{array}
\right.  
\end{equation}
where $\beta\in(0,1)$, $p>2$;
and $\chi_{\{u>0\}}$ denotes the
characteristic function of the set of points $(x,t)$ where $u(x,t)>0$, i.e 
\[
\chi_{\{u>0\}}= \left\{
\begin{array}
[c]{lr}%
1,& \text{ if } u>0, \\
0, & \text{ if } u\leq 0.
\end{array}
\right.  
\] 
Note that the absorption term $\chi_{\{u>0\}}u^{-\beta}$ becomes singular when $u$ is near to $0$, and we impose $\chi_{\{u>0\}}u^{-\beta}=0$ whenever $u=0$. Through this paper, we assume that $f: \mathbb{R}\rightarrow \mathbb{R}$, $f\in\mathcal{C}(\mathbb{R})$ is a nonnegative  function. But,  $f$ will be addressed in detail later for the existence of solution, see $(H_1)$ and $(H_2)$ below.  
\\

As already known,  problem (\ref{plap1}) in the semi-linear case ($p=2$, and $f=0$) can be considered as a limit of mathematical  models arising in Chemical Engineering corresponding to catalyst kinetics  of
Langmuir-Hinshelwood type (see, e.g. \cite{Strieder-Aris} p. 68, \cite{Phillips} and reference therein). The semi-linear case was studied in many papers such as \cite{Phillips}, \cite{Kawohl}, \cite{Levine}, \cite{Davila-Montenegro survey}, \cite{AnhDiazPaul}, \cite{Winkler-non uniquenes}, and so forth. These papers focused on studying the existence of solution, and the behaviors of solutions. From our knowledge, the existence result of the semi-linear case  was first proved by Phillips  for the Cauchy problem (see Theorem $1$, \cite{Phillips}). The same result holds for the semi-linear equation with positive Dirichlet boundary condition (see Theorem $2$, \cite{Phillips}).  Moreover, he proved a property of the finite speed of propagation of nonnegative solutions, i.e, any solution with compact support initially has compact support at all later times  $t>0$.  
\\

The semi-linear problem of this type was also extended in many aspects. In \cite{Davila-Montenegro survey}, J. Davila, and M. Montenegro proved the existence of solution with zero Dirichlet boundary condition  with a source term $f(u)$. We emphasize that the equations of this type with zero Dirichlet boundary condition are harder than the one of positive  Dirichlet boundary condition because of the effect of the singular term $u^{-\beta}\chi_{\{u>0\}}$.  Furthermore, they showed that the uniqueness result holds for a particular class of positive solutions, see Theorem $1.10$ in \cite{Davila-Montenegro survey}. Recently, Diaz et al., \cite{AnhDiazPaul}, proved a uniqueness result for a   class of solutions, which is different from the one of \cite{Davila-Montenegro survey}.  However,  Winkler  showed that the uniqueness result fails  in general (see Theorem $1.1$, \cite{Winkler-non uniquenes}). 
\\

After that, the equations of this type was considered under  more general forms. For example, the case of quasilinear diffusion operators was already considered in \cite{Kawohl} (for a different diffusion term). We also mention here  the 
porous medium of this type was studied by B. Kawohl and R. Kersner,  \cite{Kaw-Kersner}. We note that problem
$(\ref{plap1})$ was considered recently by Giacomoni et al., \cite{GiSaSer} with $f(u)$ on the right hand side, but there was  a technical fault in the proof of the existence of  solution. 
\\

Inspired by the above studies, we would like to investigate the existence of nonnegative solutions and the behaviors of solutions  of  equation $(\ref{plap1})$. Before stating our
main results, let us define the notion of a weak solution of equation
$(\ref{plap1})$. 
\begin{definition}
Given $0\leq u_{0}\in L^{1}(I)$. A function $u\geq 0$ is called a weak solution of equation
$(\ref{plap1})$ if $f(u), u^{-\beta}\chi_{\{u>0\}}\in L^{1}(I\times(0,\infty))$, and $u\in L_{loc}^{p}(0,\infty;W_{0}^{1,p}(I))\cap
L_{loc}^{\infty}(\overline{I}\times(0,\infty))\cap\mathcal{C}([0,\infty
);L^{1}(I))$ satisfies equation
$(\ref{plap1})$ in
the sense of distributions $\mathcal{D^{\prime}}(I\times(0,\infty))$, i.e,
\begin{equation}\label{plapdef}
\int_{0}^{\infty}\int_{I}-u\phi_{t}+|u_{x}|^{p-2}u_{x}\phi_{x}+u^{-\beta}\chi
_{\{u>0\}}\phi  + f(u)\phi    \hspace{0.05in} dxdt=0,\quad\forall\phi\in
\mathcal{C}_{c}^{\infty}(I\times(0,\infty)). 
\end{equation}
\end{definition}
\bigskip

Next, if  $f$ satisfies either $(H_1)$ or $(H_2)$ below, we have then an existence of solution of problem $(\ref{plap1})$.
\begin{align*}
& (H_1)\quad   f\in \mathcal{C}^1(\mathbb{R}) \text{ and } f(0)=0.
\\
& (H_2)\quad   f \text{ is a nondecreasing function, and } f(0)=0.
\end{align*}
\begin{theorem}\label{theplapexistence1} Let $0\leq u_{0}\in L^{\infty}(I)$, and $f$ satisfy $(H_1)$. Then, there exists a maximal weak bounded solution $u$ of equation $(\ref{plap1})$. Moreover, we have 
\\

There exists a positive constants $C(\beta,p)$  
such that  
\begin{equation}\label{plap3d}
| u_x (x,t)| \leq
 C . u^{1-\frac{1}{\gamma}} (x, t) \left(  t^{-\frac{1}{p}}  \|u_{0}\|_{\infty}^{\frac{1+\beta}{p}} + M_f(u_0).\|u_{0}\|_{\infty}^{\frac{\beta}{p}} + M_{f^\prime}(u_0).\|u_{0}\|_{\infty}^{\frac{1+\beta}{p}} + 1 \right), 
\end{equation}
for a.e $(x,t)\in I\times(0,\infty)$, with  $\gamma=\displaystyle\frac{p}{p+\beta-1}$,  and 
  $M_g(u_0)  =  \displaystyle \left(\max_{0 \leq s \leq 2\|u_0\|_{\infty}}  |g(s)| \right)^{\frac{1}{p}}$, for any $g\in\mathcal{C}(\mathbb{R})$.
\\

As a consequence of $(\ref{plap3d})$,  for any $\tau>0$
 there is a positive constant $C(\beta
,p,\tau,\Vert u_{0}\Vert_{\infty})$ such that
\begin{equation}\label{plap3f}
|u(x,t)-u(y,s)|\leq C\left(  |x-y|+|t-s|^{\frac{1}{2}}\right)  ,\quad\forall
x,y\in\overline{I},\quad\forall t,s \geq \tau. 
\end{equation}
\end{theorem}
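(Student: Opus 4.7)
The plan is to construct the maximal solution as the monotone limit of classical approximations $u_\epsilon$, to establish the gradient estimate $(\ref{plap3d})$ at the approximate level via a tailored Bernstein argument, and to deduce $(\ref{plap3f})$ from $(\ref{plap3d})$ together with a uniform bound on the time oscillation.

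First I would replace $u^{-\beta}\chi_{\{u>0\}}$ by the bounded, smooth approximation $(u+\epsilon)^{-\beta}$ and the initial datum by a smooth strictly positive $u_{0,\epsilon}$ with $u_{0,\epsilon}\to u_0$ in $L^1(I)$ and $\|u_{0,\epsilon}\|_\infty\le \|u_0\|_\infty$. Classical quasilinear parabolic theory (after adding an auxiliary non-degenerating term $\eta(|u_x|^2+\eta)^{(p-2)/2}u_x$ and letting $\eta\downarrow 0$) yields a smooth strictly positive solution $u_\epsilon$ on $I\times(0,\infty)$. Since the absorption and $f\ge 0$ both dissipate, the maximum principle gives $0<u_\epsilon\le \|u_0\|_\infty$. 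Because $\epsilon\mapsto (s+\epsilon)^{-\beta}$ is decreasing, comparison yields $u_\epsilon$ monotone in $\epsilon$; the limit $u:=\lim_{\epsilon\downarrow 0}u_\epsilon$ is therefore automatically the \emph{maximal} weak solution.

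The technical heart is the Bernstein-type estimate for $u_\epsilon$ uniformly in $\epsilon$. Guided by the exponent $1-1/\gamma=(1-\beta)/p$ appearing on the right of $(\ref{plap3d})$, the natural scale-invariant quantity to control is
\[
w_\epsilon(x,t) := \varphi(t)\,\frac{|u_{\epsilon,x}(x,t)|^p}{u_\epsilon(x,t)^{\,1-\beta}},
\]
where $\varphi(t)$ is a time weight tuned to generate the $t^{-1/p}$ contribution, in the spirit of the Aronson--B\'enilan construction for the porous medium equation. Differentiating the PDE in $x$ and computing the parabolic evolution of $w_\epsilon$, the singular absorption contributes an \emph{absorbing} term precisely thanks to the weight $u_\epsilon^{-(1-\beta)}$; the $f$-terms enter through $f(u_\epsilon)$ and $f'(u_\epsilon)u_{\epsilon,x}$, and after applying Young's inequality they produce exactly the combinations $M_f(u_0)\|u_0\|_\infty^{\beta/p}$ and $M_{f'}(u_0)\|u_0\|_\infty^{(1+\beta)/p}$ recorded in $(\ref{plap3d})$. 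Applying the parabolic maximum principle to $w_\epsilon$ on $I\times(0,T]$ then yields the desired pointwise bound, independent of $\epsilon$. The main obstacle is the algebraic bookkeeping: one must verify that the exponents match so that the singular absorption actually dominates the worst nonlinear gradient term; that the boundary contribution on $\partial I$ (where $u_\epsilon\to 0$) is harmless, e.g.\ by first working on $\{u_\epsilon>\delta\}$ and sending $\delta\downarrow 0$; and that $\varphi$ can be chosen to swallow the initial blow-up of $w_\epsilon$ at $t=0$ while still respecting the interior maximum principle.

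Finally, the uniform estimate $(\ref{plap3d})$ combined with $\|u_\epsilon\|_\infty\le \|u_0\|_\infty$ gives an $\epsilon$-uniform spatial Lipschitz bound on compact subsets of $\overline I\times(0,\infty)$. A standard duality argument (testing the equation against a space-mollified time increment) then yields uniform $1/2$-H\"older continuity in time. Arzel\`a--Ascoli extracts a subsequence converging uniformly on compact subsets of $\overline I\times[\tau,\infty)$, and dominated convergence (using $\beta<1$ together with the uniform $L^\infty$ bound) passes to the limit in the weak formulation; thus $u$ is a weak solution of $(\ref{plap1})$ satisfying $(\ref{plap3d})$, and maximality is inherited from the monotone approximation. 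Estimate $(\ref{plap3f})$ is then immediate: the spatial part follows from $(\ref{plap3d})$ since $u^{1-1/\gamma}\le \|u_0\|_\infty^{1-1/\gamma}$, and the time part from the uniform $1/2$-H\"older control derived above.
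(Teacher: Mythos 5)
Your overall strategy — regularize the absorption, prove a Bernstein-type gradient bound uniform in the regularization parameter, pass to the limit, and use monotonicity of the approximating family to identify the limit as the maximal solution — is the same skeleton the paper uses. Your Bernstein functional $w_\epsilon = \varphi(t)\,|u_{\epsilon,x}|^p/u_\epsilon^{1-\beta}$ is just the $(p/2)$-th power of the paper's choice $w = \xi(t) v_x^2$ with $v = z^{1/\gamma}$, so that part is essentially equivalent. However, there are two genuine gaps.

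First, the regularization $(u+\epsilon)^{-\beta}$ is incompatible with the zero Dirichlet boundary condition: the constant $0$ is \emph{not} a subsolution of the regularized equation, since $(0+\epsilon)^{-\beta} = \epsilon^{-\beta} > 0$ forces the solution down near the boundary where it must be $0$. So the claim that $u_\epsilon$ is strictly positive on $I\times(0,\infty)$ fails, and with it the well-definedness of $w_\epsilon$ (which has $u_\epsilon^{1-\beta}$ in the denominator). The paper circumvents exactly this by (a) using the cutoff $g_\varepsilon(s) = s^{-\beta}\psi(s/\varepsilon)$ which vanishes identically for $s\le\varepsilon$, and (b) lifting the boundary and initial data by $\eta>0$ in problem $(P_{\varepsilon,\eta})$ so that $\eta$ is a genuine subsolution and $z_{\varepsilon,\eta}\ge\eta$ everywhere; only after proving the uniform gradient bound is $\eta\downarrow 0$ taken. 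Your ``work on $\{u_\epsilon > \delta\}$ and send $\delta \to 0$'' remark does not close this gap because the set $\{u_\epsilon > \delta\}$ is an unknown, time-dependent free-boundary region and the maximum of $w_\epsilon$ could sit on its lateral boundary, where you have no information.

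Second, the passage to the limit is too cavalier. With a regularization that never vanishes, $(u_\epsilon+\epsilon)^{-\beta}$ does not converge dominatedly to $u^{-\beta}\chi_{\{u>0\}}$ near $\{u=0\}$; identifying the limit absorption and showing it is integrable requires a Fatou argument followed by testing the limit equation with $\psi_\eta(u)\phi$ and a second passage $\eta\to 0$, together with the gradient estimate to kill the term $\frac{1}{\eta}|u_x|^p\psi'(u/\eta)\phi$. This is precisely the chain the paper carries out in $(\ref{plap47})$–$(\ref{plap56})$; in your write-up it is replaced by a single appeal to ``dominated convergence using $\beta<1$,'' which is not sufficient. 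The algebraic bookkeeping in the Bernstein computation that you flag as ``the main obstacle'' is indeed nontrivial (it is where the condition $3\gamma-4\le 0$ vs.\ $>0$ splits into two cases and where the $\eta^\alpha$-term must be absorbed by choosing $\alpha > 2(\gamma-1)/\gamma$), and since you do not carry it out, the derivation of the precise constants $M_f(u_0)$, $M_{f'}(u_0)$ in $(\ref{plap3d})$ remains unverified.
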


\begin{theorem}
\label{theplapexistence} Let $0\leq u_{0}\in L^{1}(I)$, and $f$ satisfy $(H_2)$. Then, there exists a maximal weak solution $u$ of equation $(\ref{plap1})$. Furthermore, we have 
\\

 For any $\tau>0$, there exist two positive constants $C_1(\beta,p,|I|)$ and $C_2(p, |I|)$ 
such that  
\begin{equation}
|u_x(x,t)|\leq C_1 . u^{1-\frac{1}{\gamma}}(x,t)\left(  \tau^{-\frac{\lambda+\beta+1}{\lambda p}%
}\Vert u_{0}\Vert_{L^{1}(I)}^{\frac{1+\beta}{\lambda}} +  \tau^{-\frac{\beta}{\lambda p}}\Vert u_{0}\Vert_{L^{1}(I)}^{\frac{\beta}{\lambda}} . m_f(\tau, u_0)    +1     \right), \label{plap3b}
\end{equation}
for a.e $(x,t)\in I\times(\tau,\infty)$, with $\lambda=2(p-1)$,  and 
  $m_f(\tau, u_0)= f^\frac{1}{p}\left(C_2 . \tau^{-\frac{1}{\lambda}}\|u_0\|^{\frac{p}{\lambda}}_{L^1(I)} \right)$.
\\

As a consequence of $(\ref{plap3b})$, 
 there is a positive constant $C(\beta
,p,\tau,|I|,\Vert u_{0}\Vert_{L^{1}(I)})$ such that
\begin{equation}
|u(x,t)-u(y,s)|\leq C\left(  |x-y|+|t-s|^{\frac{1}{2}}\right)  ,\quad\forall
x,y\in\overline{I},\quad\forall t,s \geq \tau. \label{plap3c}%
\end{equation}
\end{theorem}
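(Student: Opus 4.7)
The plan is to reduce Theorem~\ref{theplapexistence} to (a variant of) Theorem~\ref{theplapexistence1} through a double monotone approximation combined with the classical $L^1$--$L^\infty$ smoothing effect of the $p$-Laplacian. I approximate $u_0\in L^1(I)$ by $u_{0,n}:=\min(u_0,n)\in L^\infty(I)$ and the continuous nondecreasing $f$ by a sequence $f_m\in\mathcal{C}^1(\mathbb{R})$ of nonnegative nondecreasing functions with $f_m(0)=0$ and $f_m\to f$ locally uniformly from below. Theorem~\ref{theplapexistence1} then produces maximal bounded weak solutions $u_{n,m}$ for each pair $(u_{0,n},f_m)$. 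Because $u_{n,m}^{-\beta}\chi_{\{u_{n,m}>0\}}+f_m(u_{n,m})\geq 0$, each $u_{n,m}$ is a weak subsolution of the pure $p$-Laplacian flow from $u_{0,n}$, which in one space dimension satisfies the smoothing estimate
\[
\|u_{n,m}(\tau/2)\|_{\infty}\leq C(p,|I|)\,\tau^{-1/\lambda}\|u_{0,n}\|_{L^1}^{p/\lambda}\leq C_2\,\tau^{-1/\lambda}\|u_0\|_{L^1}^{p/\lambda},\qquad \lambda=2(p-1).
\]

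I then restart~(\ref{plap1}) at time $\tau/2$ with $u_{n,m}(\tau/2)$ as bounded initial datum and apply~(\ref{plap3d}) on $I\times[\tau/2,\infty)$; for $t\geq\tau$ I substitute $\|u_0\|_{\infty}$ by the smoothing bound and $t$ by $t-\tau/2\geq\tau/2$. Direct exponent bookkeeping yields $-\tfrac{1}{p}-\tfrac{1+\beta}{\lambda p}=-\tfrac{\lambda+\beta+1}{\lambda p}$ and matches the first summand of~(\ref{plap3b}); the factor $\|u_{n,m}(\tau/2)\|_\infty^{\beta/p}$ becomes $\tau^{-\beta/(\lambda p)}\|u_0\|_{L^1}^{\beta/\lambda}$, and $M_{f_m}$ evaluated at twice the smoothing bound converges as $m\to\infty$ to $m_f(\tau,u_0)=f^{1/p}\!\big(C_2\,\tau^{-1/\lambda}\|u_0\|_{L^1}^{p/\lambda}\big)$. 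The main obstacle is the $M_{f'_m}$ term in~(\ref{plap3d}), which is not controlled uniformly in $m$ since $f$ is merely continuous; under~$(H_2)$ I expect it to be dispensable because, in the Bernstein-type derivation of~(\ref{plap3d}), the contribution of $f$ enters through $f'_m(u)\,u_x$ with the sign favorable to the maximum principle that bounds $|u_x|$. A careful inspection of the proof of Theorem~\ref{theplapexistence1} should show this term may simply be dropped under~$(H_2)$, producing an $m$-uniform gradient estimate that, after the smoothing substitution, gives exactly~(\ref{plap3b}).

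Finally, I pass to the limits $m\to\infty$ (monotone convergence $f_m\nearrow f$ and Ascoli--Arzelà on $I\times[\tau,T]$ using the $m$-uniform gradient estimate) and then $n\to\infty$ (monotone convergence $u_{0,n}\nearrow u_0$ in $L^1$, uniformity in $n$ of~(\ref{plap3b}) since it depends on $u_0$ only through $\|u_0\|_{L^1}$, and the $L^1$-contraction of the $p$-Laplacian flow for the required $\mathcal{C}([0,\infty);L^1(I))$-continuity). Maximality is preserved at every step because the construction is order-preserving. The Hölder estimate~(\ref{plap3c}) then follows directly from~(\ref{plap3b}): on $I\times[\tau,\infty)$ the $L^\infty$ bound coming from the smoothing effect controls the prefactor $u^{1-1/\gamma}$, giving uniform spatial Lipschitz continuity, and the $\tfrac{1}{2}$-Hölder continuity in time is a standard testing argument in~(\ref{plap1}), the nonlinearities being already bounded on this set.
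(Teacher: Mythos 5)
Your overall strategy—double monotone approximation of $u_0$ and $f$, $L^1$--$L^\infty$ smoothing of the $p$-Laplacian flow, restart at $\tau/2$, substitution of $\|u_0\|_\infty$ by the smoothing bound, matching exponents—is structurally the same as the paper's (compare the proof of Theorem~\ref{theepsilon1}, estimates~(\ref{plap27})--(\ref{plap29})), and your exponent bookkeeping is correct. But you have left the load-bearing step as an expectation: you write that the $M_{f'_m}$ term ``should'' be dispensable under $(H_2)$ and defer to ``a careful inspection.'' This is precisely the content of the paper's Lemma~\ref{lemgradientestimate1}, and it is where the proof of Theorem~\ref{theplapexistence} actually differs from that of Theorem~\ref{theplapexistence1}. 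The mechanism is a one-line sign observation, but it must be exhibited: in the Bernstein computation the $f$-contribution at the maximum point of $w=\xi v_x^2$ enters (after multiplying by $v^{(1+\beta)\gamma}$) as $-f'(\varphi(v))\psi_\varepsilon\,v^{(1+\beta)\gamma}\,v_x^2$ on the right of $(\gamma-1)\gamma^{p-2}|v_x|^{p+2}\leq\cdots$ (see~(\ref{plap10}) and~(\ref{plap14a})). When $f'\geq 0$ this term is nonpositive and may simply be discarded, which removes $M_{f'}$ from~(\ref{plap14b}) onward. Your informal description ``$f'_m(u)u_x$ with the sign favorable to the maximum principle'' points in the right direction, but without identifying the exact term and its sign the argument is incomplete, since it is not a priori obvious that $f'$ survives only with the good sign through the chain rule and the multiplication by $v^{(1+\beta)\gamma}$.

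Two further points need care. First, the $\mathcal{C}([0,\infty);L^1(I))$-continuity cannot be obtained from ``$L^1$-contraction of the $p$-Laplacian flow'': the full problem~(\ref{plap1}) with the singular absorption $u^{-\beta}\chi_{\{u>0\}}$ has \emph{no} $L^1$-contraction and not even uniqueness (this is a point the paper stresses; see Winkler's counterexample), and the absorption nonlinearity is not Lipschitz near zero, so the usual accretivity argument breaks. The paper proves continuity in two stages: a Cauchy-in-$\mathcal{C}([0,T];L^1)$ argument at fixed $\varepsilon$ using the global Lipschitz regularized absorption $g_\varepsilon$ together with the truncations $T_1,S_1$ (estimates~(\ref{plap41})--(\ref{plap46})), and then a Fatou-type identification~(\ref{plap47})--(\ref{plap47a}) to pass $\varepsilon\to 0$. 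Second, ``maximality is preserved at every step because the construction is order-preserving'' needs justification: the solution map for~(\ref{plap1}) is not known to be order-preserving on weak solutions precisely because uniqueness fails. The paper's Proposition~\ref{promaxsolution} instead compares an arbitrary weak solution $v$ with the $\varepsilon$-regularized solution $u_\varepsilon$ (where Lemma~\ref{uniqueness} gives comparison) and lets $\varepsilon\to 0$. You would need an analogue of this argument for your two-parameter family $u_{n,m}$.
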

\begin{remark}
Note that estimate $(\ref{plap3b})$ does not include the term of $f^\prime$, compare with $(\ref{plap3d})$.  Actually, this one is a combination of estimate $(\ref{plap3d})$ without $M_{f^\prime}(u_0)$, and the smoothing effect $L^1-L^\infty$.
\end{remark}
\begin{remark}
Conclusion $(\ref{plap3f})$ (resp. $(\ref{plap3c})$) implies that $u$ is continuous up to the boundary.
This result answers an open question stated in the Introduction of
\cite{Winkler-non uniquenes} for the semi-linear case.
\end{remark}
\begin{remark}
When $p=2$ and $f=0$, estimate $(\ref{plap3d})$ becomes the gradient estimates in \cite{Phillips}, \cite{Davila-Montenegro survey}, \cite{Winkler-non uniquenes}.  
\end{remark}
\begin{remark}
The condition $f(0)=0$ in $(H_1)$ and $(H_2)$ is necessary for the existence of nonnegative solutions. If $f$ violates this one, i.e, $f(0)>0$ then the existence result  fails, see Corollary \ref{Cornonexist}.
\end{remark}

A second goal of this article is to study the most striking phenomenon of equations of  this type, the so called  quenching phenomenon that solution vanishes after a finite time. This property arises due to the presence of the singular term $u^{-\beta}\chi_{\{u>0\}}$. It occurs even starting with a positive unbounded  initial data and there is a lack of uniqueness of solutions (see Theorem $1.1$, \cite{Winkler-non uniquenes} again). Then we have the following results
\begin{theorem}
\label{theplapqueching1} Assume as in Theorem $\ref{theplapexistence1}$. Let $v$ be
any weak solution of equation $(\ref{plap1})$. Then, there is a finite time
$T_{0}=T_{0}(\beta, p, \Vert u_{0}\Vert_{\infty})$ such that
\[
v(t)=0,\quad\text{for }t\geq T_{0}.
\]
\end{theorem}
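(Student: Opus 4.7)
The plan is to bound any weak solution pointwise by an explicit spatially-constant supersolution coming from an ODE, and then to exploit the maximality of the solution from Theorem~\ref{theplapexistence1}. Set
\[
T_0 = \frac{\|u_0\|_\infty^{\beta+1}}{\beta+1}, \qquad w(t) = \bigl((\beta+1)(T_0 - t)\bigr)_+^{1/(\beta+1)}.
\]
Then $w$ is spatially constant, $w(0) = \|u_0\|_\infty \geq u_0$, and on $\{w > 0\}$ it solves $w'(t) = -w(t)^{-\beta}$. Since $w_x \equiv 0$ and $f(w) \geq 0$, one checks
\[
\partial_t w - (|w_x|^{p-2} w_x)_x + w^{-\beta}\chi_{\{w > 0\}} + f(w) = f(w) \geq 0,
\]
so $w$ is a supersolution of $(\ref{plap1})$ that vanishes identically for $t \geq T_0$.

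The heart of the argument is the comparison $u \leq w$ for the maximal solution $u$. A direct test by $(u - w)_+$ runs into trouble because the absorption $s \mapsto s^{-\beta}\chi_{\{s > 0\}}$ is decreasing, contributing the wrong sign on the set $\{u > w > 0\}$. Instead, I would revisit the approximation scheme used to construct $u$ in Theorem~\ref{theplapexistence1}: regularize the singular term by the bounded Lipschitz function $g_n(s) = (s + 1/n)^{-\beta}$ and obtain smooth approximations $u_n$. For the regularized PDE, $w$ itself is only a subsolution, so in its place use the spatially-constant ODE supersolution $\bar w_n$ solving $\bar w_n' = -g_n(\bar w_n)$ with $\bar w_n(0) = \|u_0\|_\infty$, truncated at zero after its vanishing time
\[
\bar T_n = \frac{(\|u_0\|_\infty + 1/n)^{\beta+1} - (1/n)^{\beta+1}}{\beta+1}.
\]
The classical comparison for Lipschitz reaction terms then gives $u_n \leq \bar w_n$; the superadditivity $(a+b)^{\beta+1} \geq a^{\beta+1} + b^{\beta+1}$ for $\beta+1 \geq 1$ shows $\bar T_n \geq T_0$ with $\bar T_n \to T_0$, and $\bar w_n \to w$ locally uniformly, so passing to the limit in the monotone construction of $u$ yields $u(x, t) \leq w(t)$ for a.e.\ $(x, t)$.

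Since $w(t) = 0$ for $t \geq T_0$ and $u \geq 0$, we conclude $u(\cdot, t) \equiv 0$ for $t \geq T_0$. For an arbitrary weak solution $v$, maximality of $u$ gives $v \leq u$ and hence $v(\cdot, t) \equiv 0$ for $t \geq T_0$. The main obstacle is the comparison step: the singular, non-monotone absorption blocks a direct pointwise argument, so one must work at the level of the regularization from Theorem~\ref{theplapexistence1} and deploy the slightly enlarged supersolution $\bar w_n$ whose quenching time $\bar T_n$ approaches the target $T_0$ from above only in the limit.
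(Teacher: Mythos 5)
Your high-level strategy coincides with the paper's: compare against a spatially constant ODE supersolution at the level of the regularized problem, pass to the limit, and invoke maximality from Proposition~\ref{promaxsolution}. The explicit form of $w$ and $T_0$, and your observation that a direct test by $(u-w)_+$ is blocked by the non-monotone absorption, are both correct. However, you misidentify the paper's regularization, and this matters. The paper approximates $u^{-\beta}\chi_{\{u>0\}}$ by $g_\varepsilon(s)=s^{-\beta}\psi_\varepsilon(s)$, which vanishes identically for $s\le\varepsilon$, not by $g_n(s)=(s+1/n)^{-\beta}$ as you state. The distinction is not cosmetic: the entire quenching argument (and Proposition~\ref{promaxsolution} itself) hinges on the pointwise inequality $g_\varepsilon(v)\le v^{-\beta}\chi_{\{v>0\}}$ holding for \emph{all} $v\ge 0$, which requires $g_\varepsilon(0)=0$. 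Your $g_n$ has $g_n(0)=n^\beta>0$, so on the set $\{v=0\}$ a weak solution $v$ of $(\ref{plap1})$ (equivalently the maximal solution $u$) fails to be a subsolution of your regularized equation. Consequently, the approximations $u_n$ you build need not dominate $u$, their decreasing limit $\tilde u$ need not equal $u$, and the bound $\tilde u\le w$ does not transfer to $u\le w$. As written, the argument does not close.

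The fix is to use the paper's $g_\varepsilon$ throughout. Then $\Gamma_\varepsilon$, the solution of $\Gamma_\varepsilon'=-g_\varepsilon(\Gamma_\varepsilon)$ with $\Gamma_\varepsilon(0)=\|u_0\|_\infty$, is a supersolution of $(P_\varepsilon)$; Lemma~\ref{uniqueness} gives $u_\varepsilon\le\Gamma_\varepsilon$; and since $\Gamma_\varepsilon\to\Gamma=w$ pointwise as $\varepsilon\to 0$, one obtains $u\le w$ and hence $v\le u\le w$ for every weak solution $v$. Note that with this regularization $\Gamma_\varepsilon$ never reaches zero (it stalls once it enters $[\varepsilon,2\varepsilon]$ where $\psi_\varepsilon$ cuts off), so there is no finite extinction time $\bar T_n$ at the approximate level; the quenching emerges only in the limit. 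Your superadditivity computation $\bar T_n\ge T_0$, $\bar T_n\to T_0$ is correct as arithmetic but simply has no role in the paper's scheme and can be dropped.
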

\begin{theorem}
\label{theplapqueching} Assume as in Theorem $\ref{theplapexistence}$. Let $v$ be
any weak solution of equation $(\ref{plap1})$. Then, there is a finite time
$T_{0}=T_{0}(\beta,p,|I|,\Vert u_{0}\Vert_{L^{1}(I)})$ such that
\[
v(t)=0,\quad\text{for }t\geq T_{0}.
\]
\end{theorem}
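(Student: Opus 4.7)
My plan is to dominate $v$ pointwise by a spatially-constant ODE super-solution $W(t)$ that extinguishes in finite time. Fix $\tau>0$ and let $M_\tau:=\|v(\cdot,\tau)\|_{\infty}$, which is finite thanks to the $L^{1}$--$L^{\infty}$ smoothing effect encoded in Theorem \ref{theplapexistence} (one may take $M_\tau\leq C_2\tau^{-1/\lambda}\|u_0\|_{L^1(I)}^{p/\lambda}$). Define
\[
W(t):=\bigl[M_\tau^{1+\beta}-(1+\beta)(t-\tau)\bigr]_+^{1/(1+\beta)},
\]
so that $W_t+W^{-\beta}\chi_{\{W>0\}}=0$, $W(\tau)=M_\tau$, and $W$ vanishes identically for $t\geq T_0:=\tau+M_\tau^{1+\beta}/(1+\beta)$. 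Since $f\geq 0$ and $W_x\equiv 0$, $W$ is a super-solution of $(\ref{plap1})$ with $W(\tau)\geq v(\cdot,\tau)$ and $W\geq v|_{\partial I}=0$.

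The core of the proof is to establish the pointwise bound $v(x,t)\leq W(t)$ on $I\times[\tau,T_0)$. I would insert the test function $(v-W)_+\in W^{1,p}_0(I)$ (admissible because $v(\pm l,t)=0\leq W$) into the weak formulation $(\ref{plapdef})$ and subtract the corresponding identity for $W$. The $p$-Laplacian contributes $-\int_{\{v>W\}}|v_x|^p\,dx\leq 0$, the source term contributes $-\int(v-W)_+f(v)\,dx\leq 0$, and on $\{v>0,W=0\}$ the absorption gives $-\int v^{1-\beta}\,dx\leq 0$. The only sign-indefinite contribution arises on $\{v>W>0\}$, where the mean-value theorem yields $W^{-\beta}-v^{-\beta}\leq\beta W^{-\beta-1}(v-W)$. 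Setting $E(t):=\tfrac12\int_I(v-W)_+^2\,dx$, I obtain the differential inequality
\[
E'(t)\leq 2\beta\,W^{-\beta-1}(t)\,E(t),\qquad t\in(\tau,T_0).
\]
Since $v(\cdot,\tau)\leq M_\tau=W(\tau)$ forces $E(\tau)=0$, Gronwall's lemma applied on each sub-interval $[\tau,T_0-\delta]$ yields $E\equiv 0$ there, hence $v\leq W$ a.e.\ on $I\times(\tau,T_0)$. The $\mathcal{C}([0,\infty);L^1(I))$-continuity of $v$ together with $W(t)\to 0$ as $t\to T_0^-$ then gives $v(\cdot,T_0)\equiv 0$ in $I$.

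To extend the vanishing to all $t\geq T_0$, I would test $(\ref{plap1})$ with $v$ itself on $(T_0,\infty)$ and integrate by parts, obtaining
\[
\tfrac12\tfrac{d}{dt}\int_I v^{2}\,dx+\int_I|v_x|^p\,dx+\int_I v^{1-\beta}\chi_{\{v>0\}}\,dx+\int_I v\,f(v)\,dx=0.
\]
Nonnegativity of the last three terms makes $t\mapsto\int_I v^2\,dx$ non-increasing, and since it vanishes at $T_0$, it vanishes identically on $[T_0,\infty)$. I expect the main obstacle to be the borderline nature of the Gronwall step in the previous paragraph: the factor $W^{-\beta-1}$ fails to be integrable near $T_0$, so the inequality cannot be applied on the entire interval $[\tau,T_0]$ at once. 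This is circumvented by working on compact sub-intervals $[\tau,T_0-\delta]$ and crucially exploiting that $E(\tau)=0$ \emph{exactly} (not merely small), so that the bound $E(t)\leq E(\tau)\exp(2\beta\int_\tau^t W^{-\beta-1})$ forces $E\equiv 0$ regardless of the size of the exponential.
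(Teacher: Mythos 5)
Your proof is essentially correct, but it takes a genuinely different route from the paper's. The paper proves quenching by first reducing to the maximal solution $u$ via Proposition \ref{promaxsolution}, then comparing the regularized solutions $u_\varepsilon$ of $(P_\varepsilon)$ against the \emph{regularized} ODE $\Gamma_\varepsilon'+g_\varepsilon(\Gamma_\varepsilon)=0$ with initial value the $L^1$--$L^\infty$ bound $L(\tau,u_0)$; the comparison principle there is applied only at the smooth $\varepsilon$-level (via Lemma \ref{uniqueness}), and the singular extinction profile $\Gamma$ is recovered by letting $\varepsilon\to0$. You instead compare the weak solution $v$ directly with the singular ODE supersolution $W$, establishing $v\leq W$ through an $L^2$-energy estimate with the truncated test function $(v-W)_+$, the mean-value bound $W^{-\beta}-v^{-\beta}\leq\beta W^{-\beta-1}(v-W)$ on $\{v>W>0\}$, and a Gronwall argument on compact subintervals $[\tau,T_0-\delta]$ exploiting that $E(\tau)$ is identically zero. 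This avoids passing through the regularization and is arguably more self-contained; what it buys is a direct comparison theorem for the singular equation itself, which the paper never states. The trade-off is that your argument needs two justifications you only gesture at: (i) the $L^1$--$L^\infty$ bound $M_\tau\leq C\tau^{-1/\lambda}\|u_0\|_{L^1}^{p/\lambda}$ for an \emph{arbitrary} weak solution $v$ is not directly in Theorem \ref{theplapexistence}, which concerns the constructed maximal solution; you must either invoke Proposition \ref{promaxsolution} (giving $v\leq u$) or observe that $v$ is a subsolution of the $p$-Laplacian flow, exactly as the paper does for $u_{\varepsilon,n}$ in $(\ref{plap23})$--$(\ref{plap27})$; (ii) inserting $(v-W)_+$ and later $v$ into the weak formulation $(\ref{plapdef})$ requires a density/Steklov-averaging argument since Definition 1 admits only $\phi\in\mathcal{C}_c^\infty(I\times(0,\infty))$. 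Both are standard and appear implicitly in the paper's own uniqueness proof in the Appendix, so there is no gap of substance, but they should be made explicit. One small slip of notation: the source contribution should read $-\int(f(v)-f(W))(v-W)_+\,dx$, which is $\leq 0$ by the monotonicity hypothesis $(H_2)$ rather than merely by $f\geq 0$; as written, $-\int(v-W)_+f(v)\,dx$ is not the term produced by subtracting the $W$-equation.
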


Besides, we shall investigate the existence of solution of the Cauchy problem associated to equation $(\ref{plap1})$.
\begin{equation} \label{plapCauchy}
\left\{
\begin{array}
[c]{lr}%
\partial_{t}u-(|u_{x}|^{p-2}u_{x})_{x}+ u^{-\beta}\chi_{\{u>0\}}+ f(u)=0, &
\text{in}\hspace{0.05in}\mathbb{R}\times(0,\infty),
\\
u(x,0)=u_{0}(x), & \hspace{0.05in}\text{in}\hspace{0.05in}\mathbb{R}.
\end{array}
\right. 
\end{equation} 
Moreover, we also study behaviors of solutions  of Cauchy problem such as the quenching phenomenon, and the finite speed of propagation.  In particular, we show that if $f$ satisfies a certain growth condition at infinity, then any weak solution has the instantaneous shrinking of compact support (in short ISS), namely, if $u_0$ only goes to $0$ uniformly as $|x|\rightarrow\infty$, then the support of any weak  solution  is bounded for any $t>0$.  Concerning the ISS phenomenon,  we refer to \cite{BorU}, \cite{EvanKnerr}, \cite{Herrero}, and reference therein. Then, our main result of the Cauchy problem is as follows
\begin{theorem}
\label{theexistenceCauchy1} Let $0\leq u_{0}\in
L^{1}(\mathbb{R})\cap L^{\infty}(\mathbb{R})$. Assume that $f$ satisfies either $(H_1)$ or $(H_2)$. Then, there exists a weak bounded solution
$u\in\mathcal{C}([0,\infty);L^{1}(\mathbb{R}))\cap L^p(0,T; W^{1,p}(\mathbb{R}))$, satisfying
equation $(\ref{plapCauchy})$ in $\mathcal{D^{\prime}}(\mathbb{R}%
\times(0,\infty))$. 
\\

i) Furthermore, any solution with compact support initially has compact support for any $t>0$. And, the solution $u$ constructed above is a maximal solution of equation $(\ref{plapCauchy})$.
\\

ii) In addition, if $u_0(x)\rightarrow 0$ uniformly as $x\rightarrow\infty$, and   $f$ satisfies the following  growth condition at infinity:
 \[
 (H_3)  \quad\text{There is a real number $q_0\in(0,1)$ such that }  f(s)  \geq s^{q_0},\text{ when } s\rightarrow +\infty,
 \]
then such a weak solution of problem $(\ref{plapCauchy})$ has ISS property.
\end{theorem}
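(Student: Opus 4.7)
I would construct $u$ by exhaustion. Let $u_n$ denote the maximal Dirichlet solution of~(\ref{plap1}) on $I_n=(-n,n)$ with initial datum $u_0\vert_{I_n}$, supplied by Theorem~\ref{theplapexistence1} under $(H_1)$ or Theorem~\ref{theplapexistence} under $(H_2)$. Comparison with the constant super-solution $\|u_0\|_\infty$ gives $\|u_n\|_\infty\le\|u_0\|_\infty$ uniformly in $n$, and the gradient estimates~(\ref{plap3d}) and~(\ref{plap3b}) supply a uniform bound for $(u_n)_x$ on any compact set $[-k,k]\times[\tau,T]$ of $\mathbb{R}\times(0,\infty)$. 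Combined with the equation these yield a uniform local control on $\partial_t u_n$, and Aubin--Simon compactness extracts a subsequence converging in $L^1_{\mathrm{loc}}$ to some $u$ lying in the required function space. Strong $L^1_{\mathrm{loc}}$ convergence is enough to pass to the limit in $f(u_n)\to f(u)$ (dominated convergence using the uniform $L^\infty$ bound and $(H_1)$ or $(H_2)$) and in the singular term $u_n^{-\beta}\chi_{\{u_n>0\}}$ (via the Fatou/monotonicity argument already employed in the Dirichlet case).

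\textbf{Maximality and finite propagation.} For maximality, I would insert an extra regularization truncating the singular absorption at level $\varepsilon^{-\beta}$ and imposing constant Dirichlet data $\|u_0\|_\infty$ (rather than $0$) on $\partial I_n$. The resulting auxiliary problems are non-singular and admit a classical comparison principle, so their solutions $u_{n,\varepsilon}$ dominate every weak Cauchy solution $v$ on $I_n$; passing to the double limit $\varepsilon\downarrow0$, $n\uparrow\infty$ and identifying the limit with $u$ yields $v\le u$. Finite speed of propagation, when $\mathrm{supp}\,u_0\subset[-R_0,R_0]$, follows from comparison with an explicit Barenblatt-type super-solution of the pure $p$-Laplacian heat equation (parameters tuned so that it initially majorizes $u_0$); the non-negative absorption $u^{-\beta}\chi_{\{u>0\}}+f(u)$ only reinforces the super-solution inequality.

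\textbf{Instantaneous shrinking.} This is the main obstacle. Fix $t_0>0$. Since $u_0(x)\to0$ uniformly, for each $\eta>0$ I may select $\rho=\rho(\eta)$ with $u_0(x)\le\eta$ for $|x|\ge\rho$. On $\{x\ge\rho\}\times[0,t_0]$ I would construct a super-solution $V(x,t)$ satisfying $V(\rho,t)\ge\|u_0\|_\infty\ge u(\rho,t)$, $V(x,0)\ge u_0(x)$, and $V(x,t_0)=0$ for $x\ge\rho+L$, by gluing (i)~the stationary profile
\[
\Psi(\xi)=A\bigl(L-\xi\bigr)_+^{p/(p-1+\beta)},
\]
whose exponent is the unique one that balances $(|\Psi'|^{p-2}\Psi')'$ against $\Psi^{-\beta}$ at the free boundary $\xi=L$ (a construction possible because $\beta\in(0,1)$ forces the exponent above $1$, so that $\Psi$ matches $C^1$ onto the zero extension), with (ii)~an ODE envelope $y(t)$ satisfying $\dot y=-f(y)-y^{-\beta}$, $y(0)=\eta$, whose extinction time $T(\eta)\to0$ as $\eta\to0$. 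The role of $(H_3)$ is to furnish the sublinear absorption that drives the ODE envelope to zero rapidly in the regime not already controlled by the singular term and, together with the boundedness of $u_0$, to ensure that the envelope is a majorant of $u$ in the tail region $\{x\ge\rho+L\}$. Choosing $\eta$ so small that $T(\eta)\le t_0$ and applying the comparison principle gives $u(x,t_0)=0$ for $x\ge\rho(\eta)+L$; the symmetric argument on $\{x\le-\rho\}$ completes the proof. The main technical difficulty is the compatible gluing of the stationary and ODE barriers into a single global super-solution of~(\ref{plapCauchy}) in the weak sense of~(\ref{plapdef}), which requires a delicate matching of the parameters $A$, $L$, $\eta$ at the moving interface where the two profiles cross.
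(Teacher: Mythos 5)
Your proposal diverges from the paper at several points, and two of your steps contain genuine gaps.

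\textbf{Existence.} You construct $u$ via Aubin--Simon compactness on the exhaustion $u_n$. The paper instead observes that $\{u_r\}_{r>0}$ is \emph{monotone nondecreasing} in $r$ (by the comparison principle on nested intervals), so the limit exists pointwise by monotone convergence; the gradient limits then follow from a Boccardo--Murat-type argument. Your compactness route is viable for local convergence, but it does not by itself give $u\in\mathcal{C}([0,\infty);L^1(\mathbb{R}))$ on the unbounded domain. The paper proves $u\in\mathcal{C}([0,\infty);L^1_{\mathrm{loc}}(\mathbb{R}))$ via Porretta's compactness lemma (which requires bounded domains), and then upgrades to $L^1(\mathbb{R})$ continuity by a tail argument exploiting the $L^1$-contraction $\|u(t)\|_{L^1}\le\|u_0\|_{L^1}$. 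You should address this step explicitly.

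\textbf{Finite speed of propagation and maximality.} Here you take a genuinely different route and it costs you. For finite speed you propose comparison with a Barenblatt-type supersolution of the pure $p$-Laplacian. This does give compact support at each $t>0$, but the support then grows like $t^{1/\lambda}$. The paper instead compares with the \emph{stationary} profile $w(x)=\bigl(\|u_0\|_\infty^{1/\gamma}-\sigma x\bigr)_+^\gamma$, a supersolution of $-(|w'|^{p-2}w')'+w^{-\beta}\chi_{\{w>0\}}=0$ (the absorption balancing the diffusion), which gives the uniform-in-time inclusion $\mathrm{Supp}\,v(t)\subset I_{m_0}$. That uniform bound is precisely what the paper then uses for maximality: any Cauchy solution $v$ is forced to be a zero-Dirichlet solution on the \emph{fixed} interval $I_{m_0}$, so $v\le u_r\le u$ for $r\ge m_0$. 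Your alternative --- regularized problems on $I_n$ with constant positive Dirichlet data $\|u_0\|_\infty$ --- correctly gives $v\le u_{n,\varepsilon}$ on $I_n$, but the step ``passing to the double limit and identifying the limit with $u$'' is a real gap: as $n\to\infty$ the positive boundary data injects a boundary layer, and showing this layer vanishes (so that $\lim_n\lim_\varepsilon u_{n,\varepsilon}=u$ rather than something larger) requires exactly the barrier decay estimate you do not supply. The paper explicitly states it has \emph{no} maximality result for the Cauchy problem without compact support, which should make you skeptical of an argument that appears to bypass that hypothesis.

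\textbf{Instantaneous shrinking.} Your barrier construction (stationary profile $\Psi(\xi)=A(L-\xi)_+^{p/(p-1+\beta)}$ glued with an ODE envelope) has the right profile exponent, but you yourself flag that the gluing at the moving interface is unresolved. The maximum of two supersolutions is not automatically a supersolution for a degenerate parabolic equation, so this is not a cosmetic detail. The paper avoids the issue entirely: using $(H_3)$, it establishes the pointwise lower bound
\begin{equation*}
v^{-\beta}\chi_{\{v>0\}}+f(v)\;\ge\;R_0^{-(\beta+q_0)}\,v^{q_0}\qquad\text{for all }v\ge0,
\end{equation*}
so $v$ is a subsolution of $\partial_t y-(|y_x|^{p-2}y_x)_x+c\,y^{q_0}=0$ with $q_0\in(0,1)$, whose ISS property is known from the literature. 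The role of $(H_3)$ is precisely to make the above lower bound hold \emph{uniformly} (for large $v$, the singular term is too weak and $f$ must take over); your sketch understates this, since the ODE envelope argument you use relies on the singular term being dominant, which is only the case when $v$ is small. You should either finish the gluing or adopt the reduction argument.

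\end{document}
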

\begin{remark}
We note that  our results above also hold for the case where $f$ is only a global Lipschitz function with $f(0)=0$, see Remark \ref{rem20}, Remark \ref{remfinal}, and Theorem \ref{thegloLip}.
\end{remark}

The paper is organized as follows: Section $2$ is devoted to prove 
 a sharp gradient estimate, which is the main key of proving the
existence of solution. In section $3$, we shall give the  proof of
Theorem \ref{theplapexistence}, and  Theorem \ref{theplapexistence1} is proved  in the same way. Section $4$ is devoted to study the quenching phenomenon (including the proofs of Theorem
\ref{theplapqueching} and Theorem \ref{theplapqueching1}). Finally, Section 5 concerns studying  the existence of solution of the associated Cauchy problem, and  behaviors of solutions, thereby includes the proof of Theorem \ref{theexistenceCauchy1}.
\\

Several notations which will be used through this paper are the following: we
denote by $C$ a general positive constant, possibly varying from line to line.
Furthermore, the constants which depend on parameters will be emphasized by
using parentheses. For example, $C=C(p,\beta,\tau)$ means that $C$ only
depends on $p,\beta,\tau$. We also denote by $I_{r}(x)=(x-r,x+r)$ to the open ball
 with center at $x$ and radius $r>0$ in $\mathbb{R}$. If $x=0$, we denote $I_r(0)=I_r$.
Next $\partial_x u$ (resp. $\partial_t u$) means the partial derivative with respect to $x$ (resp. $t$). We also write $\partial_x u=u_x$. Finally, the $L^\infty$-norm of $u$  is denoted by $\|u\|_{\infty} $.
\begin{acknowledgement}
This research  was supported by the ITN FIRST of the Seventh
Framework Program of the European Community (grant agreement number 238702).
\end{acknowledgement}

\section{A sharp gradient estimate}
\hspace{0.2in}In this part, we shall modify Bernstein's technique to obtain estimates on $|u_{x}|$, so called the gradient estimate in $N$-dimension.  Roughly speaking, the gradient estimates that we shall prove are of the type
\begin{equation}\label{plap4b}
|u_x (x,t)|\leq C_1 . u^{1-\frac{1}{\gamma}}(x,t)\left(1  + C_2(f, f^\prime)\right),\quad\text{for a.e
}(x,t)\in I\times(0,\infty),
\end{equation} 
where the constant $C_1$ merely depends on the parameters $\beta, p$, while  $C_2$ involves the terms of $f$ and $f^\prime$.  
It is
well known that such a  gradient estimate  of  $(\ref{plap4b})$ plays a crucial role in
proving the existence of solution (see, e.g. \cite{Phillips},
\cite{Davila-Montenegro survey}, \cite{Winkler-non uniquenes} for the semi-linear case; and see \cite{Kaw-Kersner} for the  porous medium of this type). The degeneracy of the diffusion operator as
$p>2$ leads, obviously, to a considerable amount of additional technical
difficulties.
In the case $f=0$, it is not difficult to show that  estimate $(\ref{plap4b})$
 becomes an equality for a suitable constant $C_1$ ( $C_2=0$), when considering the stationary equation of $(\ref{plap1})$. 
That is the reason why   such a gradient estimate of this type is called 
\textit{a sharp gradient estimate} (since  the power of $u$ in $(\ref{plap4b})$ cannot bigger or smaller than $1-1/\gamma$). By the appearance of the nonlinear diffusion, $p$-laplacian,  we shall establish previously the gradient estimates for
the solutions of the following regularizing problem.
\\

For any $\varepsilon>0$, let us set
\[
g_{\varepsilon}(s)=s^{-\beta}\psi_{\varepsilon}(s),\text{ with }%
\psi_{\varepsilon}(s)=\psi(\frac{s}{\varepsilon}),
\]
 and $\psi\in\mathcal{C}^{\infty}(\mathbb{R})$, $0\leq\psi\leq1$ is a
non-decreasing function such that 
$\psi(s)=\left\{
\begin{array}
[c]{lr}%
0, & \text{if }s\leq1,\\
1, & \text{if }s\geq2.
\end{array}
\right.$
\\
Now fix  $\varepsilon>0$,  we consider the following
problem
\begin{equation}
(P_{\varepsilon, \eta})\left\{
\begin{array}
[c]{lr}%
\partial_{t}z-(a(z_{x})z_{x})_{x}+g_{\varepsilon}(z)  + f(z)\psi_\varepsilon(z) =0, & \text{in}%
\hspace{0.05in}I\times(0,\infty),\\
z(-l,t)=z(l,t)=\eta, & \hspace{0.05in}t\in(0,\infty),\\
z(x,0)=z_{0}(x)+\eta, & \hspace{0.05in}x\in I,
\end{array}
\right.  \label{plap4}%
\end{equation}
with 
$a(s)=b(s)^{\frac{p-2}{2}} $, $b(s)=|s|^{2}+\eta^{\alpha}$;  
$\alpha>0$ will be addressed  later; and $\eta\rightarrow 0^+$.
Note that 
 $a(z_{x})$ is a regularization of $|z_{x}|^{p-2}$. Then, problem $(P_{\varepsilon, \eta})$ can be understood as a regularization of
equation $(\ref{plap1})$. The gradient estimates, presented in
this framework are as follows:

\begin{lemma}
\label{lemgradientestimate} 
Given  $0\leq z_{0} \in\mathcal{C}_{c}^{\infty
}(I),\hspace{0.05in} z\not=0$. Assume that $f\in \mathcal{C}^1(\mathbb{R})$ is a nonnegative function. Then,  for any $\eta\in(0,\displaystyle\min\{\varepsilon, \|z_0\|_{L^{\infty}(I)}\})$, there
exists a unique classical solution $z_{\varepsilon, \eta}$ of equation
$(\ref{plap4})$. Moreover, there is a positive constant $C(\beta, p)$ such
that 
\begin{equation}\label{plapgradient}
|\partial_{x} z_{\varepsilon, \eta}(x,\tau)| \leq
 C . z_{\varepsilon, \eta}^{1-\frac{1}{\gamma}} (x, \tau) \left(  \tau
^{-\frac{1}{p}}  \|z_{0}\|_{\infty}^{\frac{1+\beta}{p}} +    M_f(z_0) .\|z_{0}\|_{\infty}^{\frac{\beta}{p}}   + M_{f^\prime}(z_0) .\|z_{0}\|_{\infty}^{\frac{1+\beta}{p}}+ 1 \right),
\end{equation}
for $(x,\tau) \in I\times(0,\infty)$. Recall here \hspace{0.05in} $M_g(u_0)  =  \displaystyle \left(\max_{0 \leq s \leq 2\|u_0\|_{\infty}}  |g(s)| \right)^{\frac{1}{p}}$.
\end{lemma}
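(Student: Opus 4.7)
The statement contains two assertions: existence/uniqueness of a classical solution of $(P_{\varepsilon,\eta})$, and the gradient bound. I would treat them in that order.

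For existence, I would rely on classical quasilinear parabolic theory. The $\eta^\alpha$-regularisation makes the operator uniformly parabolic since $\partial_s[a(s)s] = b(s)^{(p-4)/2}\bigl((p-1)s^2+\eta^\alpha\bigr) \geq c(\eta,p)>0$ on bounded sets; the zero-order term $g_\varepsilon(z)+f(z)\psi_\varepsilon(z)$ is $\mathcal{C}^1$ in $z$ because the cutoff $\psi_\varepsilon$ kills the singularity of $z^{-\beta}$ and $f\in\mathcal{C}^1$; and, since $z_0$ has compact support, the initial datum $z_0+\eta$ matches the lateral value $\eta$ at the corner $\partial I\times\{0\}$. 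Standard Schauder theory (Ladyzhenskaya--Solonnikov--Ural'tseva, Ch.~V--VI) then yields a unique classical $z_{\varepsilon,\eta}$, and comparison with the constants $\eta$ and $\|z_0\|_\infty+\eta$ gives $\eta\leq z_{\varepsilon,\eta}\leq \|z_0\|_\infty+\eta$.

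For the gradient estimate I would apply Bernstein's method. Writing $z=z_{\varepsilon,\eta}$, $\omega=z_x^2$ and $R(z)=g_\varepsilon(z)+f(z)\psi_\varepsilon(z)$, differentiating the PDE in $x$ and multiplying by $2z_x$ produces
$$\omega_t = B(z_x)\,\omega_{xx} + Q(z_x)\,z_{xx}^2 - 2R'(z)\,\omega,$$
with $B(s)=b(s)^{(p-4)/2}[(p-1)s^2+\eta^\alpha]$ and $Q(s)=2(sB'(s)-B(s))$. The exponent $1-1/\gamma=(1-\beta)/p$ is that of the stationary self-similar profile $z(x)=c|x|^\gamma$ of $(|u_x|^{p-2}u_x)_x=u^{-\beta}$, which realises equality in the target bound. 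I would therefore test with
$$P(x,t) = \omega(x,t) - A\,z(x,t)^{2(1-1/\gamma)} - \Phi(t),$$
where $A$ is calibrated to dominate the singular absorption and the time part
$$\Phi(t) = C_1\|z_0\|_\infty^{2(1+\beta)/p}t^{-2/p} + C_2 M_f(z_0)^2\|z_0\|_\infty^{2\beta/p} + C_3 M_{f'}(z_0)^2\|z_0\|_\infty^{2(1+\beta)/p} + C_4$$
is chosen so that the $t^{-2/p}$ blow-up forces $P\leq 0$ at $t=0^+$ (no bound on $z_0'$ is assumed), the $M_f$-term absorbs the $F_z R(z)$ contribution of $f(z)$, and the $M_{f'}$-term absorbs the $f'(z)\omega$ contribution of $R'(z)\omega$. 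At a positive interior maximum, $P_x=0$ forces $z_{xx}=F_z/2$ with $F=Az^{2(1-1/\gamma)}$, $P_{xx}\leq 0$ bounds $\omega_{xx}\leq F_{zz}\omega+F_z^2/2$, and $P_t\geq 0$ combined with the $\omega$-equation yields an algebraic inequality incompatible with $\omega>F+\Phi$. On the lateral boundary $z\equiv\eta$, so the $z$-term in $P$ is constant and $|z_x|$ is controlled by a Hopf-type barrier from the uniform parabolicity; taking $p$-th roots produces \eqref{plapgradient}.

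\emph{Main obstacle.} The delicate point is the explicit calibration of $A$ and $\Phi$: every error produced by $f$, $f'$, $\psi_\varepsilon'$, and by the $\eta^\alpha$-regularisation (which breaks the exact stationary scaling) must be strictly dominated by the Bernstein coercive term $-B(z_x)F_{zz}\omega$, positive because $F_{zz}<0$ (note $1-2/\gamma<0$ for $p>2$, $\beta>0$). This has to be checked in both regimes $|z_x|^2\lesssim\eta^\alpha$ and $|z_x|^2\gg\eta^\alpha$; moreover the sign of $Q(s)$ switches between $2<p<3$ and $p\geq 3$, so the argument must be organised to work uniformly in $p>2$. Finally, one must verify that the residual $\eta^\alpha$-dependent contributions collapse into the ``$+1$'' inside the parenthesis of \eqref{plapgradient} using $\eta\leq\|z_0\|_\infty$, so that the bound is truly independent of $\eta$.
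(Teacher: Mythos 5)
Your existence/uniqueness discussion and the Bernstein set-up are on the right track: differentiating the PDE to get the $\omega$-equation with the coefficients $B(s)=b^{(p-4)/2}[(p-1)s^2+\eta^\alpha]$ and $Q(s)=2(sB'-B)$ is correct, and the identification of the exponent $1-\tfrac{1}{\gamma}=\tfrac{1-\beta}{p}$ with the stationary profile of $(|u_x|^{p-2}u_x)_x=u^{-\beta}$ is exactly right. The flaw is in the choice of barrier. The \emph{additive} ansatz
\[
P(x,t)=\omega(x,t)-A\,z^{2(1-1/\gamma)}-\Phi(t)
\]
cannot produce the claimed estimate $(\ref{plapgradient})$. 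Even if you succeed in showing $P\leq 0$ everywhere, the conclusion is only $z_x^2\leq A\,z^{2(1-1/\gamma)}+\Phi(t)$, which does \emph{not} factor as $z_x^2\leq C\,z^{2(1-1/\gamma)}(\cdots)^2$. Near $\partial I$ one has $z=\eta$, so $A z^{2(1-1/\gamma)}=A\eta^{2(1-1/\gamma)}\to 0$ while $\Phi(\tau)$ stays fixed; the additive piece dominates and the weight $z^{1-1/\gamma}$ disappears. To recover $(\ref{plapgradient})$ from your bound one would need $\Phi(\tau)\lesssim z^{2(1-1/\gamma)}$, which fails precisely where $z\sim\eta$, and any attempt to salvage it introduces a factor $\eta^{-2(1-1/\gamma)}$ that destroys the $\eta$-uniformity needed to pass to the limit. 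The multiplicative structure is not a cosmetic issue: it is what later gives $|u_x|^p\lesssim u^{1-\beta}$, used e.g. in the proof of $(\ref{plap52a})$.

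The paper's device is a change of variables $z=\varphi(v)=v^{\gamma}$, after which the Bernstein quantity is $w=\xi(t)v_x^2=\xi(t)\gamma^{-2}z^{-2(1-1/\gamma)}z_x^2$, i.e. precisely the quotient $\omega/z^{2(1-1/\gamma)}$ with a time cutoff. A \emph{constant} bound on $w$ then is the multiplicative bound you need, and the time cutoff $\xi$ plays the role your $t^{-2/p}$ blow-up in $\Phi$ was meant to play. If you replace your $P$ by the multiplicative barrier $P=\omega-\Phi(t)z^{2(1-1/\gamma)}$ (or, equivalently, perform the substitution $z=v^\gamma$ and bound $\xi v_x^2$ as the paper does), the maximum-principle computation you outlined goes through in the spirit you intended. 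As a secondary remark, after the substitution the sign bookkeeping is organized slightly differently from what you anticipated: the paper's case split is $3\gamma-4\lessgtr 0$, i.e. $p\lessgtr 4(1-\beta)$, not $p\lessgtr 3$, and the $\eta^\alpha$-term in $\mathcal{B}_2$ is absorbed by choosing $\alpha>2(\gamma-1)/\gamma$ and letting $\eta\to 0^+$, rather than by a direct comparison of the two regimes $|z_x|^2\lessgtr\eta^\alpha$.
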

\begin{proof}
The existence and uniqueness of solution, $z_{\varepsilon,\eta}
\in\mathcal{C}^{\infty}(\overline{I}\times\lbrack0,\infty))$ is well-known (see, e.g.  \cite{Herrero}, \cite{LaSoU}, \cite{WuJingHui}, \cite{Herrero} and
\cite{Zhao}). For sake of brevity, let us drop dependence on $\varepsilon
,\eta$ in the notation  of $z_{\varepsilon, \eta}$, and put 
\[
z=z_{\varepsilon,\eta}.
\]

It is clear that $\eta$ (resp. $\Vert z_{0}\Vert_{L^{\infty}%
(I)}+\eta$) is a sub-solution (resp. super-solution) of equation
$(\ref{plap4})$. Then, the comparison principle yields
\begin{equation}
\eta\leq z\leq\Vert z_{0}\Vert_{L^{\infty}(I)}+\eta  \leq 2\Vert z_{0}\Vert_{L^{\infty}(I)},\quad\text{in }%
I\times(0,\infty).\label{plap5}%
\end{equation}

For any $0<\tau<T<\infty$ , let us consider a test function $\xi
(t)\in\mathcal{C}_{c}^{\infty}(0,\infty)$, $0\leq\xi(t)\leq1$ such that
\[
\xi(t)=\left\{
\begin{array}
[c]{lr}%
1, & \text{on }[\tau,T],\\
& \\
0, & \text{outside }(\frac{\tau}{2},T+\frac{\tau}{2}).
\end{array}
\right.  ,\quad\text{and }|\xi_{t}|\leq\frac{c_{0}}{\tau},
\]
and put
\[
z=\varphi(v)=v^{\gamma},\quad w(x,t)=\xi(t)v_{x}^{2}.
\]
Then, we have
\begin{equation}\label{plap6}
w_{t}-aw_{xx}=\xi_{t}.v_{x}^{2}+2\xi v_{x}(v_{t}-av_{xx})_{x}-2\xi av_{xx}%
^{2}+2\xi a_{x}v_{xx}.
\end{equation}
From the equation satisfied by $z$, we get
\[
v_{t}-av_{xx}=a_{x}v_{x}+av_{x}^{2}\frac{\varphi^{\prime\prime}}
{\varphi^{\prime}}-\frac{g_\varepsilon(\varphi)}{\varphi^{\prime}}  -\frac{f(\varphi)\psi_\varepsilon (\varphi)}    {\varphi^{\prime}},
\]
 Combining the last two equations provides us
\[
w_{t}-aw_{xx}=\xi_{t}v_{x}^{2}+2\xi v_{x}\left(  a_{x}v_{x}+av_{x}^{2}
\frac{\varphi^{\prime\prime}}{\varphi^{\prime}}-\frac{g_\varepsilon(\varphi)}
{\varphi^{\prime}}  -\frac{f(\varphi)\psi_\varepsilon (\varphi)}{\varphi^{\prime}}  \right)_{x}-2\xi av_{xx}^{2}+2\xi a_{x}v_{xx}.
\]

Now, we define
\[
L=\max_{\overline{I}\times\lbrack0,\infty)}\{w(x,t)\}.
\]
If $L=0$, then the conclusion $(\ref{plapgradient})$ is trivial, and
$|z_{x}(x,\tau)|=0,\quad$in $I\times(0,\infty)$. If not we have  $L>0$, then the function $w$ must attain
its maximum at a point $(x_{0},t_{0})\in I\times(0,\infty)$ since $w(x,t)=0$
on $\partial I\times(0,\infty)$ and $w(x,t)|_{t=0}=0$. These facts lead to
\[
\left\{
\begin{array}
[c]{lr}%
w_{t}(x_{0},t_{0})=w_{x}(x_{0},t_{0})=0, & \\
\text{and} & \\
w_{xx}(x_{0},t_{0})\leq 0, &
\end{array}
\right.
\]
and  $v_{x}(x_{0},t_{0})\not =0$, so we get
\begin{equation}
w_{x}(x_{0},t_{0})=0\text{ if and only if }v_{xx}(x_{0},t_{0}%
)=0.\label{plap7b}%
\end{equation}
At the point $(x_{0},t_{0})$, $(\ref{plap6})$ and $(\ref{plap7b})$ provide us
\[
0\leq w_{t}-aw_{xx}=\xi_{t}v_{x}^{2}+2\xi v_{x}\left(  a_{xx}v_{x}+a_{x}%
v_{x}^{2}\frac{\varphi^{\prime\prime}}{\varphi^{\prime}}+av_{x}^{2}\left(
\frac{\varphi^{\prime\prime}}{\varphi^{\prime}}\right)  _{x}- \left(
\frac{g_\varepsilon(\varphi)}{\varphi^{\prime}} \right) _{x}  -\left( \frac{f(\varphi)\psi_\varepsilon (\varphi)}{\varphi^{\prime}} \right)_{x} \right)  .
\]
\begin{equation}\label{plap7}
0\leq\xi_{t}\xi^{-1}v_{x}^{2}+2v_{x}\left(  a_{xx}v_{x}+a_{x}v_{x}^{2}%
\frac{\varphi^{\prime\prime}}{\varphi^{\prime}}+av_{x}^{2}\left(
\frac{\varphi^{\prime\prime}}{\varphi^{\prime}}\right)  _{x}- \left(
\frac{g_\varepsilon(\varphi)}{\varphi^{\prime}} \right) _{x}  -\left( \frac{f(\varphi)\psi_\varepsilon (\varphi)}{\varphi^{\prime}} \right)_{x} \right) .
\end{equation}
By using again $(\ref{plap7b})$, we obtain
\begin{equation}
a_{x}(z_x)(x_{0},t_{0})=(p-2)b^{\frac{p-4}{2}}(z_x)   \varphi^{\prime}
\varphi^{\prime\prime}v^3_x,\label{plap8}%
\end{equation}
and
\begin{equation}
a_{xx}(z_x)(x_{0},t_{0})=(p-2)(p-4)b^{\frac{p-6}{2}}(z_x)  (\varphi^{\prime}%
.\varphi^{\prime\prime})^2 v_{x}^{6}+(p-2)b^{\frac{p-4}{2}}(z_x)(\varphi^{\prime
\prime2}+\varphi^{\prime}\varphi^{\prime\prime\prime})v_{x}^{4}.\label{plap9}%
\end{equation}
Next, we have
\begin{equation}
\left(  \frac{\varphi^{\prime\prime}}{\varphi^{\prime}}\right)  _{x}=\left(
\frac{\varphi^{\prime\prime\prime}\varphi^{\prime}-\varphi^{\prime\prime2}%
}{\varphi^{\prime2}}\right)  v_{x}=-(\gamma-1)v^{-2}v_{x},\label{plap9a}%
\end{equation}
and
\begin{equation*}
\left\{
\begin{array}
[c]{lr}%
v_{x}\left(  \frac{g_\varepsilon(\varphi)}{\varphi^{\prime}}\right)  _{x}=(g_{\varepsilon
}^{\prime}-g_{\varepsilon}\frac{\varphi^{\prime\prime}}{\varphi^{\prime2}%
})v_{x}^{2}=\left(  \psi_{\varepsilon}^{\prime}(\varphi)v^{-\beta}%
-(\beta+\frac{\gamma-1}{\gamma})\psi_{\varepsilon}(\varphi)v^{-(1+\beta
)\gamma}\right)  v_{x}^{2},
\\
\\
v_{x} \left( \frac{f(\varphi)\psi_\varepsilon (\varphi)}{\varphi^{\prime}} \right)_{x}  = \left((f\psi_\varepsilon)^\prime
-   (f\psi_\varepsilon)\frac{\varphi^{\prime\prime}}{\varphi^{\prime 2}}\right) v^2_x
= (f\psi_\varepsilon)^\prime v^2_x - f(\varphi(v)) .\psi_\varepsilon (\varphi(v)). (\frac{\gamma-1}{\gamma} )v^{-\gamma} v^2_x.
\end{array}
\right.  
\end{equation*}
Since $f, \psi_\varepsilon, \psi^\prime_\varepsilon\geq 0$, and $0\leq \psi_\varepsilon\leq 1$, we get 
\begin{equation}\label{plap9b}
\left\{
\begin{array}
[c]{lr}%
v_{x}\left(  \frac{g(\varphi)}{\varphi^{\prime}}\right)  _{x}\geq-(\beta
+\frac{\gamma-1}{\gamma})v^{-(1+\beta)\gamma}v_{x}^{2},
\\
\\
v_{x} \left( \frac{f(\varphi)\psi_\varepsilon (\varphi)}{\varphi^{\prime}} \right)_{x}   \geq  f^\prime(\varphi(v)) \psi_\varepsilon v^2_x
- (\frac{\gamma-1}{\gamma} ) f(\varphi(v))   v^{-\gamma} v^2_x.
\end{array}
\right.  
\end{equation}
Inserting $(\ref{plap8})$, $(\ref{plap9})$, $(\ref{plap9a})$, and
$(\ref{plap9b})$,   into $(\ref{plap7})$ yields
\[
\frac{1}{2}\xi_{t}\xi^{-1}v_{x}^{2}+  \underbrace{(p-2)(p-4)b^{\frac{p-6}{2}}(\varphi
^{\prime}\varphi^{\prime\prime})^2 v_{x}^{8}+(p-2)b^{\frac{p-4}{2}}%
(2\varphi^{\prime\prime2}+\varphi^{\prime}\varphi^{\prime\prime\prime}%
)v_{x}^{6}}_{\mathcal{B}} +
\]
\begin{equation}
(\beta+\frac{\gamma-1}{\gamma})v^{-(1+\beta)\gamma}v_{x}^{2} +  
(\frac{\gamma-1}{\gamma} )f(\varphi(v))v^{-\gamma} v^2_x- f^\prime(\varphi(v)) \psi_\varepsilon v^2_x
\geq (\gamma-1)v^{-2} a(z_x)   v_{x}^{4}.\label{plap10}%
\end{equation}
Next, we make a computation to handle $\mathcal{B}$%
\[
\begin{array}
[c]{c}%
\mathcal{B}=
(p-2)b^{\frac{p-6}{2}} (z_x)  v_{x}^{6}\left(  (p-4)(\varphi^{\prime}\varphi
^{\prime\prime})^2 v_{x}^{2}+(2\varphi^{\prime\prime2}+\varphi^{\prime}%
\varphi^{\prime\prime\prime})b(z_x)  \right)  =
\\
(p-2)\varphi^{\prime2}b^{\frac{p-6}{2}}(z_x)   v_{x}^{8}\left(  (p-2)\varphi
^{\prime\prime2}+\varphi^{\prime}\varphi^{\prime\prime\prime}\right)
+\eta^{\alpha}(p-2)(2\varphi^{\prime\prime2}+\varphi^{\prime}\varphi
^{\prime\prime\prime}) b^\frac{p-6}{2}(z_x) v_{x}^{6}=
\\
\underbrace{(p-2)(p(\gamma-1)-\gamma)\gamma^{2}(\gamma-1)v^{2(\gamma-2)}\varphi^{\prime
2} b^{\frac{p-6}{2}}(z_x)   v_{x}^{8}}_{\mathcal{B}
_{1}}  +  \underbrace{\eta^{\alpha}(p-2)\gamma^{2}(\gamma
-1)(3\gamma-4)v^{2(\gamma-2)}b^{\frac{p-6}{2}}(z_x)  v_{x}^{6}}_{\mathcal{B}_{2}}
\end{array}
\]
We observe that $\mathcal{B}_{1}\leq0$ since $p(\gamma-1)-\gamma<0$, so we
have
\begin{equation}\label{plap11}
\mathcal{B}\leq\mathcal{B}_{2}.
\end{equation}
By $(\ref{plap10})$ and $(\ref{plap11})$, we get
\[
\frac{1}{2}\xi_{t}\xi^{-1}v_{x}^{2}+(\beta+\frac{\gamma-1}{\gamma
})v^{-(1+\beta)\gamma}v_{x}^{2}+   (\frac{\gamma-1}{\gamma} )f(\varphi(v)) v^{-\gamma} v^2_x -f^\prime(\varphi(v)) \psi_\varepsilon v^2_x  +  \mathcal{B}_{2}   \geq(\gamma-1)v^{-2}a(z_x)   v_{x}^{4}.
\]
The fact that $b^{\frac{p-2}{2}}(.)$ is an increasing function since $p>2$ 
leads to
\[
a(z_x) = b^{\frac{p-2}{2}}(z_{x}) \geq  (v_{x}^{2}\varphi^{\prime2})^{\frac{p-2}{2}%
}=|v_{x}|^{p-2}\gamma^{p-2}v^{(\gamma-1)(p-2)}.
\]
From the two last inequalities, we obtain
\begin{align*}
&\frac{1}{2}\xi_{t}\xi^{-1}v_{x}^{2}+(\beta+\frac{\gamma-1}{\gamma
})v^{-(1+\beta)\gamma}v_{x}^{2}+   (\frac{\gamma-1}{\gamma} )f(\varphi(v)) v^{-\gamma} v^2_x   -
\\
& f^\prime(\varphi(v)) \psi_\varepsilon v^2_x + \mathcal{B}_{2}\geq(\gamma-1)\gamma
^{p-2}v^{(\gamma-1)(p-2)-2}|v_{x}|^{p+2}.
\end{align*}
By noting that \hspace{0.05in} $2-(\gamma-1)(p-2)=(1+\beta)\gamma$, we get
\begin{align*}
& \frac{1}{2}\xi_{t}\xi^{-1}v_{x}^{2}+(\beta+\frac{\gamma-1}{\gamma
})v^{-(1+\beta)\gamma}v_{x}^{2}+  (\frac{\gamma-1}{\gamma} )f(\varphi(v)) v^{-\gamma} v^2_x   -  \\
& f^\prime(\varphi(v)) \psi_\varepsilon v^2_x+  \mathcal{B}_{2}\geq(\gamma-1)\gamma
^{p-2}v^{-(1+\beta)\gamma}|v_{x}|^{p+2}.
\end{align*}
Multiplying both sides of the above inequality  by $v^{(1+\beta)\gamma}%
$ yields

\[
\frac{1}{2}\xi_{t}\xi^{-1}v^{(1+\beta)\gamma}v_{x}^{2}+(\beta+\frac{\gamma
-1}{\gamma})v_{x}^{2}+   (\frac{\gamma-1}{\gamma} )f(\varphi(v)) v^{\beta\gamma} v^2_x
- 
\]
\begin{equation} \label{plap14}
f^\prime(\varphi(v)) \psi_\varepsilon  v^{(1+\beta)\gamma} v^2_x + v^{(1+\beta)\gamma}\mathcal{B}_{2}     \geq(\gamma-1)\gamma^{p-2}|v_{x}%
|^{p+2}.
\end{equation}
Now, we  divide the study of inequality $(\ref{plap14})$ in two 
cases: \vspace{0.1in} \newline\textbf{(i) Case:} 
$3\gamma-4\leq0$.
\\

 We have
$\mathcal{B}_{2}\leq 0$. It
follows then from $(\ref{plap14})$ that
\begin{equation}\label{plap14a}
(\gamma-1)\gamma^{p-2}|v_{x}|^{p+2}\leq \left( \frac{1}{2}\xi_{t}\xi^{-1}
v^{(1+\beta)\gamma} + (\beta+\frac{\gamma-1}{\gamma}) + (\frac{\gamma-1}{\gamma}) f(\varphi(v)) v^{\beta\gamma}  - f^\prime(\varphi(v)) \psi_\varepsilon  v^{(1+\beta)\gamma}\right)  v^2_x .
\end{equation}
Remind that  $z=\varphi(v)=v^\gamma$. Thus,  we infer from $(\ref{plap5})$ and $(\ref{plap14a})$
\begin{equation}\label{plap14b}
|v_{x}(x_{0},t_{0})|^{2}\leq C_{1}\left(  |\xi_{t}|\xi^{-1}(t_{0})   \Vert z_{0}\Vert_{\infty}^{1+\beta}  +\Vert z_{0}\Vert_{\infty}^{\beta} . M^p_f(z_{0}) +
 \|z_{0}\|_{\infty}^{1+\beta}. M^p_{f^\prime}(z_{0}) + 1\right)  ^{\frac{2}{p}},
\end{equation}
where $C_{1}=C_{1}(\beta,p)>0$. Using Young's inequality in the right hand side of $(\ref{plap14b})$  deduces
\[
|v_{x}(x_{0},t_{0})|^{2}\leq C_{2}  \left(  |\xi_{t}(t_0)|^{\frac{2}{p}} \xi^{-\frac{2}{p}}(t_{0})   \Vert z_{0}\Vert_{\infty}^{\frac{2(1+\beta)}{p}}  +  \Vert z_{0}\Vert_{\infty}^{\frac{2\beta}{p}}. M^2_f(z_{0})  +
 \|z_{0}\|_{\infty}^{\frac{2(1+\beta)}{p}}. M^2_{f^\prime}(z_{0})  + 1  \right),
\]
with $C_2=C_2(\beta, p)$, which implies 
\begin{align*}
& w(x_{0},t_{0})=\xi(t_{0})|v_{x}(x_{0},t_{0})|^{2}  \leq 
\\
 & C_{2}. \xi(t_0) \left(  |\xi_{t}(t_0)|^{\frac{2}{p}} \xi^{-\frac{2}{p}}(t_{0})   \Vert z_{0}\Vert_{\infty}^{\frac{2(1+\beta)}{p}}  + 
\Vert z_{0}\Vert_{\infty}^{\frac{2\beta}{p}} . M^2_f(z_{0}) +
 \|z_{0}\|_{\infty}^{\frac{2(1+\beta)}{p}}. M^2_{f^\prime}(z_{0})  + 1  \right) =
\\
& C_{2} \left(  |\xi_{t}(t_0)|^{\frac{2}{p}} \xi^{1-\frac{2}{p}}(t_{0})   \Vert z_{0}\Vert_{\infty}^{\frac{2(1+\beta)}{p}}  +\xi(t_0).
\Vert z_{0}\Vert_{\infty}^{\frac{2\beta}{p}} .  M^2_f(z_{0}) +
\xi(t_0). \|z_{0}\|_{\infty}^{\frac{2(1+\beta)}{p}}.  M^2_{f^\prime}(z_{0})  + \xi(t_0) \right).
\end{align*}
Since $\xi(t)\leq1$, $|\xi_{t}(t)|\leq\displaystyle\frac{c_{0}}{\tau}$ and $w(x_{0}%
,t_{0})=\displaystyle\max_{(x,t)\in I\times\lbrack0,\infty)}\{ w(x,t)\}$, the last estimate
induces
\[
w(x,t)\leq C_{2}  \left(\tau^{-\frac{2}{p}} 
\Vert z_{0}\Vert_{\infty}^{\frac{2(1+\beta)}{p}}+  \Vert z_{0}\Vert_{\infty}^{\frac{2\beta}{p}}. M^2_f(z_{0})  +
 \|z_{0}\|_{\infty}^{\frac{2(1+\beta)}{p}}. M^2_{f^\prime}(z_{0})  + 1\right).
\]
Thus, at time $t=\tau$ we have  
\[
w(x,\tau)= \xi(\tau).|v_{x}(x,\tau)|^{2}  \displaystyle\overset{\xi(\tau)=1}{=}  |v_{x}(x,\tau)|^{2}.
\] 
Then it follows from the last inequality
\[
|v_{x}(x,\tau)|^{2}\leq C_{2}  \left(\tau^{-\frac{2}{p}} 
\Vert z_{0}\Vert_{\infty}^{\frac{2(1+\beta)}{p}}+ \Vert z_{0}\Vert_{\infty}^{\frac{2\beta}{p}} .  M^2_f(z_{0}) +
 \|z_{0}\|_{\infty}^{\frac{2(1+\beta)}{p}} . M^2_{f^\prime}(z_{0}) + 1\right),
\]
which implies
\[
 |z_{x}(x,\tau)|\leq C_{3} . z^{1-\frac{1}{\gamma}} \left(\tau^{-\frac{2}{p}} 
\Vert z_{0}\Vert_{\infty}^{\frac{2(1+\beta)}{p}}+  
\Vert z_{0}\Vert_{\infty}^{\frac{2\beta}{p}}. M^2_f(z_{0}) +
 \|z_{0}\|_{\infty}^{\frac{2(1+\beta)}{p}}. M^2_{f^\prime}(z_{0}) + 1\right)^\frac{1}{2}.
\]
Or
\[|z_{x}(x,\tau)| \leq C_{3} . z^{1-\frac{1}{\gamma}}  \left(\tau^{-\frac{1}{p}} 
\Vert z_{0}\Vert_{L^{\infty}(I)}^{\frac{(1+\beta)}{p}}+ 
\Vert z_{0}\Vert_{\infty}^{\frac{\beta}{p}}.  M_f(z_{0}) +
 \|z_{0}\|_{\infty}^{\frac{(1+\beta)}{p}} . M_{f^\prime}(z_{0}) + 1\right).
\]
This inequality holds for any $\tau>0$, so we get conclusion
$(\ref{plapgradient})$. \vspace{0.1in} \newline\textbf{(ii) Case:}
$3\gamma-4>0\Longleftrightarrow p<4(1-\beta)$.
\\

 Now $b^{\frac{p-6}{2}}(.)$ is
a decreasing function, so we have
\[
b^{\frac{p-6}{2}}(z_{x})\leq|z_{x}|^{\frac{p-6}{2}}=(v_{x}^{2}\varphi
^{\prime2})^{\frac{p-6}{2}},
\]
which implies 
\begin{align*}
 v^{(1+\beta)\gamma} \mathcal{B}_2  \leq  \eta^{\alpha}(p-2)\gamma^{2}(\gamma-1)(3\gamma-4)\gamma^{p-6}
v^{2(\gamma-2)+(1+\beta)\gamma+(\gamma-1)(p-6)}|v_x|^{p}.
\end{align*}
Note that \hspace{0.05in} $2(\gamma-2)+(1+\beta)\gamma+(\gamma-1)(p-6)=  -2(\gamma-1)$. Then, we obtain
\begin{equation}\label{plap14c}
 v^{(1+\beta)\gamma} \mathcal{B}_2  \leq 
 \eta^{\alpha}(p-2)\gamma^{2}(\gamma-1)(3\gamma-4)\gamma^{p-6}v^{-2(\gamma
-1)}|v_x|^{p}.
\end{equation}
A combination of  $(\ref{plap14c})$ and $(\ref{plap14})$  gives us
\begin{align*}
& \frac{1}{2}\xi_{t}\xi^{-1}v^{(1+\beta)\gamma}v_{x}^{2}+(\beta+\frac
{\gamma-1}{\gamma})v_{x}^{2}+ (\frac{\gamma-1}{\gamma} ) f(\varphi(v)) v^{\beta\gamma} v^2_x - f^\prime(\varphi(v)) \psi_\varepsilon  v^{(1+\beta)\gamma} v^2_x +
\\
& \eta^{\alpha}(p-2)\gamma^{2}(\gamma-1)(3\gamma-4)\gamma^{p-6}v^{-2(\gamma
-1)}|v_x|^{p}   \geq(\gamma-1)\gamma^{p-2}|v_{x}|^{p+2}.
\end{align*}
The fact $v=z^{\frac{1}{\gamma}}\geq\eta^{\frac{1}{\gamma}}$ implies
$v^{-2(\gamma-1)}\leq\eta^{-\frac{2(\gamma-1)}{\gamma}}$.
Therefore, we get
\begin{align*}
|v_{x}(x_0, t_0)|^{p+2}\leq C_{4}\left(  |\xi_{t}|\xi^{-1}v^{(1+\beta)\gamma}+1+  f(\varphi(v)) v^{\beta\gamma}- f^\prime(\varphi(v)) \psi_\varepsilon  v^{(1+\beta)\gamma}   \right)v_{x}^{2}(x_0, t_0)+
\\
C_{4}. \eta^{\alpha-\frac{2(\gamma-1)}{\gamma}}|v_x(x_0, t_0)|^{p},
\end{align*}
with $C_{4}=C_{4}(\beta,p)>0$. 
\\

Now, if $|v_{x}(x_{0},t_{0})|<1$, then we have \hspace{0.05in}
$\xi(t_{0})|v_{x}(x_{0},t_{0})|^{2}<1$,
likewise\hspace{0.05in} $w(x,t)\leq1$, in $I\times(0,\infty)$.
Thus, the conclusion $(\ref{plapgradient})$ follows immediately. 
If not, we have \hspace{0.05in}  $|v_x(x_0, t_0)|^{p}\leq |v_x(x_0, t_0)|^{p+2}$, thereby proves 
\begin{align*}
|v_{x}(x_0, t_0)|^{p+2}\leq C_{4}\left(  |\xi_{t}|\xi^{-1}v^{(1+\beta)\gamma}+  f(\varphi(v)) v^{\beta\gamma}- f^\prime(\varphi(v)) \psi_\varepsilon  v^{(1+\beta)\gamma} +1  \right)v_{x}^{2}(x_0, t_0)+
\\
C_{4}. \eta^{\alpha-\frac{2(\gamma-1)}{\gamma}}|v_x(x_0, t_0)|^{p+2},
\end{align*}
or
\[
\left(  1-C_{4}.\eta^{\alpha-\frac{2(\gamma-1)}{\gamma}}\right)  |v_x(x_0, t_0)|^{p+2}\leq C_{4}\left(  |\xi_{t}|\xi^{-1}v^{(1+\beta)\gamma}+  f(\varphi(v)) v^{\beta\gamma}- f^\prime(\varphi(v)) \psi_\varepsilon  v^{(1+\beta)\gamma} +1  \right)v_{x}^{2}(x_0, t_0).
\]
Since $\alpha>\frac{2(\gamma-1)}{\gamma}$ and $\eta\rightarrow 0^+$, there exists a positive constant $C_{5}=C_{5}(\beta,p)>0$ such that
\begin{equation*}
 |v_x (x_0, t_0)|^{p+2}\leq C_{5}\left(  |\xi_{t}|\xi^{-1}v^{(1+\beta)\gamma}+  f(\varphi(v)) v^{\beta\gamma}- f^\prime(\varphi(v)) \psi_\varepsilon  v^{(1+\beta)\gamma} +1  \right)v_{x}^{2}(x_0, t_0).
\end{equation*}
This inequality is just a version of $(\ref{plap14a})$. 
By the same analysis as in {\bf(i)},  we also obtain estimate
$(\ref{plapgradient})$. This puts an end to the proof of Lemma
\ref{lemgradientestimate}.
\end{proof}
\medskip

\begin{remark}\label{remLipbound}
If $f$ is only a global Lipschitz function with its Lipschitz constant $C_f$, then by  Rademacher's theorem (see   also in \cite{Juha}), estimate $(\ref{plapgradient})$ becomes
\begin{equation}\label{plapgradient2}
|\partial_{x} z_{\varepsilon, \eta}(x,\tau)| \leq
 C . z_{\varepsilon, \eta}^{1-\frac{1}{\gamma}} (x, \tau) \left(  \tau
^{-\frac{1}{p}}  \|z_{0}\|_{\infty}^{\frac{1+\beta}{p}} +    M_f(z_0) .\|z_{0}\|_{\infty}^{\frac{\beta}{p}}   + C^\frac{1}{p}_f .\|z_{0}\|_{\infty}^{\frac{1+\beta}{p}}+ 1 \right),
\end{equation}
for $(x,\tau) \in I\times(0,\infty)$.
\end{remark}
\bigskip

If $f$ in  Lemma \ref{lemgradientestimate}  is  a nondecreasing function, then we can relax the term containing  $M_{f^\prime}(.)$ in estimate $(\ref{plapgradient})$.    

\begin{lemma}\label{lemgradientestimate1}
Assume that $f\in\mathcal{C}^1(\mathbb{R})$ is a nondecreasing function. Then,  estimate $(\ref{plapgradient})$ can be relaxed as follows
\begin{equation}\label{plapgradient1}
|\partial_{x} z_{\varepsilon, \eta}(x,\tau)| \leq
 C . z_{\varepsilon, \eta}^{1-\frac{1}{\gamma}} (x, \tau) \left(  \tau
^{-\frac{1}{p}}  \|z_{0}\|_{\infty}^{\frac{1+\beta}{p}} +    M_f(z_0) .\|z_{0}\|_{\infty}^{\frac{\beta}{p}}  + 1 \right),
\end{equation}
for $(x,\tau) \in I\times(0,\infty)$. Note that $ M^p_f(z_0) = f(2\|z_0\|_\infty) $  since $f$ is nondecreasing.
\end{lemma}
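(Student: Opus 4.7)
The plan is to reuse the entire proof of Lemma \ref{lemgradientestimate} almost verbatim, exploiting the monotonicity of $f$ at exactly one step to eliminate the $M_{f'}(z_0)$ contribution. The only place where $f'$ entered the previous argument was in the lower bound for $v_{x}\bigl(\frac{f(\varphi)\psi_\varepsilon(\varphi)}{\varphi'}\bigr)_x$, where the identity
\[
v_{x}\Bigl(\tfrac{f(\varphi)\psi_\varepsilon(\varphi)}{\varphi'}\Bigr)_{x}
= \bigl(f'\psi_\varepsilon+f\psi_\varepsilon'\bigr)(\varphi)\,v_x^{2}
-\tfrac{\gamma-1}{\gamma}\,f(\varphi)\,v^{-\gamma}v_x^{2}
\]
was combined with $f\psi_\varepsilon' \geq 0$ to produce the inequality in (\ref{plap9b}) featuring the term $f'(\varphi)\psi_\varepsilon v_x^2$. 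Since $f$ is now nondecreasing, we additionally have $f'(\varphi)\psi_\varepsilon v_x^{2}\geq 0$, so we may sharpen (\ref{plap9b}) to
\[
v_{x}\Bigl(\tfrac{f(\varphi)\psi_\varepsilon(\varphi)}{\varphi'}\Bigr)_{x}
\geq -\tfrac{\gamma-1}{\gamma}\,f(\varphi)\,v^{-\gamma}v_x^{2}.
\]

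With this replacement, the term $-f'(\varphi)\psi_\varepsilon v_x^{2}$ disappears from inequality (\ref{plap10}), and hence from (\ref{plap14}) and (\ref{plap14a}) as well. Running the Case (i) argument ($3\gamma-4\leq 0$) with this modified inequality then produces
\[
(\gamma-1)\gamma^{p-2}|v_x|^{p+2}\leq \Bigl(\tfrac{1}{2}\xi_t\xi^{-1}v^{(1+\beta)\gamma}+\bigl(\beta+\tfrac{\gamma-1}{\gamma}\bigr)+\tfrac{\gamma-1}{\gamma}f(\varphi(v))v^{\beta\gamma}\Bigr)v_x^{2},
\]
and the subsequent Young's-inequality step yields
\[
|v_x(x_0,t_0)|^{2}\leq C\Bigl(|\xi_t|^{2/p}\xi^{-2/p}\|z_0\|_\infty^{2(1+\beta)/p}+\|z_0\|_\infty^{2\beta/p}M_f^{2}(z_0)+1\Bigr),
\]
exactly as before but without any $M_{f'}$ contribution. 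The passage from $w(x_0,t_0)$ back to $z_x(x,\tau)$ via $z=\varphi(v)=v^\gamma$ and $\xi(\tau)=1$ then delivers (\ref{plapgradient1}).

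For Case (ii) ($3\gamma-4>0$, i.e.\ $p<4(1-\beta)$), the same omission applies: the bound (\ref{plap14c}) on $v^{(1+\beta)\gamma}\mathcal{B}_2$ is untouched, the dichotomy $|v_x(x_0,t_0)|<1$ versus $\geq 1$ is unchanged, and the absorption of $C_4\eta^{\alpha-2(\gamma-1)/\gamma}|v_x|^{p+2}$ into the left-hand side (using $\alpha>\tfrac{2(\gamma-1)}{\gamma}$ and $\eta\to 0^+$) proceeds identically, producing the analogue of (\ref{plap14a}) without the $f'$ term. Finally, because $f$ is nondecreasing with $f(0)=0$, we have $f\geq 0$ on $[0,2\|z_0\|_\infty]$ and $\max_{0\leq s\leq 2\|z_0\|_\infty}|f(s)|=f(2\|z_0\|_\infty)$, which gives the closing identification $M_f^{p}(z_0)=f(2\|z_0\|_\infty)$.

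There is no real obstacle here: the argument is a bookkeeping variant of Lemma \ref{lemgradientestimate} in which exactly one sign observation ($f'\geq 0$) removes a single term. The only point requiring any care is confirming that $f'$ does not re-enter the estimate in any other place, which is immediate from inspection of (\ref{plap7})--(\ref{plap14}).
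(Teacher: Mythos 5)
Your proposal is correct and matches the paper's argument exactly: the paper likewise observes that $f'\psi_\varepsilon \geq 0$ (since $f$ is nondecreasing) allows dropping the $f'(\varphi)\psi_\varepsilon v_x^2$ term from the lower bound in (\ref{plap9b}), and then reruns the proof of Lemma \ref{lemgradientestimate} unchanged in both Case (i) and Case (ii). The only slight inaccuracy is attributing the nonnegativity of $f$ to an assumption $f(0)=0$, which the lemma does not state; the paper instead carries over the hypothesis $f\geq 0$ from Lemma \ref{lemgradientestimate}, but this does not affect the validity of the argument.
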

\begin{proof}
The proof of this Lemma is most likely to the one of Lemma \ref{lemgradientestimate}. In fact, we just make a slight change in $(\ref{plap9b})$ in order to remove  the term involving  $f^\prime$. Recall here $(\ref{plap9b})$:
\[
v_{x} \left( \frac{f(\varphi)\psi_\varepsilon (\varphi)}{\varphi^{\prime}} \right)_{x}   \geq  f^\prime(\varphi(v)) \psi_\varepsilon v^2_x
- (\frac{\gamma-1}{\gamma} ) f(\varphi(v))   v^{-\gamma} v^2_x  .
\]
Since $f^\prime, \psi_\varepsilon\geq 0$, we obtain
\[
v_{x} \left( \frac{f(\varphi)\psi_\varepsilon (\varphi)}{\varphi^{\prime}} \right)_{x} 
\geq  - (\frac{\gamma-1}{\gamma} ) f(\varphi(v))   v^{-\gamma} v^2_x  .
\] 
After that, we just repeat the proof of Lemma $\ref{lemgradientestimate}$ without the term containing $f^\prime$. Thus, we get   estimate $(\ref{plapgradient1})$. 
\end{proof}

\begin{remark}\label{remextend}
We can also relax the assumption $f\in \mathcal {C}^1(\mathbb{R})$  in Lemma \ref{lemgradientestimate1} by considering the standard regularization of $f$, i.e, \hspace{0.05in} $f_n=  f*\varrho_n\in \mathcal{C}^1(\mathbb{R})$, where $\{\varrho_n\}_{n\geq 1}$ is the sequence of mollifier functions.  
\end{remark}
\bigskip

Next, we shall show that $z_{\varepsilon,\eta}$ is a
Lipschitz function on $I\times(\tau,\infty)$ with a Lipschitz constant $C$
being independent of $\varepsilon,\eta$.

\begin{proposition}
\label{propLipschitz} Assume $f$ as in Lemma \ref{lemgradientestimate}. Let $z_{\varepsilon,\eta}$ be the solution of equation
$(\ref{plap4})$ above. Then, for any $\tau>0$ there is a positive constant
$C(\beta, p, \tau, |I|, \|z_{0}\|_{\infty})$ such that
\begin{equation}
\label{plap18a}|z_{\varepsilon, \eta} (x,t) - z_{\varepsilon, \eta} (y,s) |
\leq C\left(  |x-y|+|t-s|^{\frac{1}{2}} \right)  , \quad\forall x,
y\in\overline{I}, \quad\forall t, s\geq  \tau.
\end{equation}
\end{proposition}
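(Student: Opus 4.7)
The strategy is to split \eqref{plap18a} into spatial and temporal pieces. The spatial bound is immediate from Lemma \ref{lemgradientestimate}: by \eqref{plap5} we have $\eta \leq z_{\varepsilon,\eta} \leq 2\|z_0\|_\infty$, so the factor $z_{\varepsilon,\eta}^{1-1/\gamma}$ on the right-hand side of \eqref{plapgradient} is uniformly bounded, and for $t \geq \tau$ we obtain $|\partial_x z_{\varepsilon,\eta}(x,t)| \leq L$ with $L = L(\beta, p, \tau, \|z_0\|_\infty)$ independent of $\varepsilon,\eta$. Integration along $x$ gives $|z_{\varepsilon,\eta}(x,t) - z_{\varepsilon,\eta}(y,t)| \leq L\,|x-y|$ for all $x, y \in \overline I$ and $t \geq \tau$.

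For the temporal estimate I shall use a Kruzhkov-type interpolation. Fix $x_0 \in \overline I$ and $\tau \leq t_1 < t_2$, set $h = t_2 - t_1$, and take the triangular cutoff $\phi_\delta(x) = \max(\delta - |x - x_0|,\, 0)$, where $\delta > 0$ will be chosen to balance the estimates. Then $\int \phi_\delta = \delta^2$, $\|\phi_\delta'\|_{L^1} = 2\delta$, and by spatial Lipschitz, $\bigl| z(x_0, t_j)\,\delta^2 - \int_I z(x, t_j)\,\phi_\delta(x)\,dx \bigr| \leq L\,\delta^3$. Testing the (classical) PDE against $\phi_\delta$ on $(t_1, t_2)$ yields
\[
\int_I [z(t_2) - z(t_1)]\,\phi_\delta\,dx = -\int_{t_1}^{t_2}\!\int_I a(z_x)\,z_x\,\phi_\delta'\,dx\,dt \; - \int_{t_1}^{t_2}\!\int_I [g_\varepsilon(z) + f(z)\,\psi_\varepsilon(z)]\,\phi_\delta\,dx\,dt,
\]
where the $p$-Laplace contribution is $O(\delta h)$ (since $|a(z_x)\,z_x| \leq (L^2 + \eta^\alpha)^{(p-1)/2}$ is bounded uniformly in $\varepsilon, \eta$) and the $f$-contribution is $O(\delta^2 h)$ (using $0 \leq f\,\psi_\varepsilon \leq \max_{[0, 2\|z_0\|_\infty]} f$).

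The main obstacle is bounding $\int\!\int g_\varepsilon(z)\,\phi_\delta\,dx\,dt$ uniformly in $\varepsilon, \eta$, as $g_\varepsilon$ is not uniformly bounded pointwise. The key observation is that $v := z^{1/\gamma}$ is also uniformly spatially Lipschitz with some constant $L'$ (because $|v_x| = |z_x|/(\gamma z^{1-1/\gamma}) \leq L/\gamma$ by \eqref{plapgradient}), combined with the algebraic fact $\gamma\beta = p\beta/(p+\beta-1) < 1$ for $\beta < 1$, which ensures $v^{-\gamma\beta}$ is integrable near the zero set of $v$. I then perform a case analysis in $t$. Let $s = \sup\{t \in [t_1, t_2] : v(x_0, t) < 2L'\delta\}$, interpreted as $s = t_1$ if the set is empty. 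On the subinterval $[t_1, s]$, $z(x_0, \cdot) < (2L'\delta)^\gamma$, so the oscillation of $z(x_0, \cdot)$ there is $O(\delta^\gamma)$. On $(s, t_2]$, $v(x_0, t) \geq 2L'\delta$, so the spatial Lipschitz lower bound gives $v(x, t) \geq L'\delta$ on $\mathrm{supp}(\phi_\delta)$ and hence $\int g_\varepsilon(z)\,\phi_\delta\,dx \leq \int v^{-\gamma\beta}\,\phi_\delta\,dx \leq C\delta^{2 - \gamma\beta}$. Applying the Kruzhkov step on $[s, t_2]$ and adding the $[t_1, s]$ contribution gives
\[
|z(x_0, t_2) - z(x_0, t_1)| \leq 2L\delta + C\bigl( h/\delta + h + h\,\delta^{-\gamma\beta} \bigr) + O(\delta^\gamma).
\]
Choosing $\delta = h^{1/2}$ and using $\gamma > 1$ together with $1 - \gamma\beta/2 > 1/2$, all the dominant terms are of order $h^{1/2}$, yielding \eqref{plap18a}.
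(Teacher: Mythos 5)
There is a gap in your time case analysis. You define $s=\sup\{t\in[t_1,t_2]:v(x_0,t)<2L'\delta\}$ and then assert that $z(x_0,\cdot)<(2L'\delta)^\gamma$ on $[t_1,s]$. This is not justified: the set $\{t:v(x_0,t)<2L'\delta\}$ is an arbitrary open subset of $[t_1,t_2]$, not necessarily an initial segment. If $v(x_0,t_1)\geq 2L'\delta$ but $v(x_0,\cdot)$ dips below the threshold at some later time, then $s>t_1$ and yet $z(x_0,t_1)\geq(2L'\delta)^\gamma$, so your claimed $O(\delta^\gamma)$ bound for $|z(x_0,t_1)-z(x_0,s)|$ fails; the difference could be of order $\|z_0\|_\infty$. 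The argument can be repaired symmetrically: let $S=\{t\in[t_1,t_2]:v(x_0,t)\leq 2L'\delta\}$. If $S=\emptyset$ run the Kruzhkov step on all of $[t_1,t_2]$; otherwise put $s_1=\inf S$, $s_2=\sup S$ (both belong to $S$ by continuity of $v(x_0,\cdot)$), apply the Kruzhkov step on $[t_1,s_1]$ and on $[s_2,t_2]$, where $v(x_0,t)>2L'\delta$ holds in each interior so the lower bound $v\geq L'\delta$ on the support of $\phi_\delta$ is available, and control the middle piece by $|z(x_0,s_1)-z(x_0,s_2)|\leq 2(2L'\delta)^\gamma$. You should also note that the integration by parts producing your Kruzhkov identity requires $\phi_\delta$ to vanish on $\partial I$; for $x_0$ within distance $\delta$ of $\partial I$ one should first shift to an interior base point using the spatial Lipschitz bound and absorb the error into the $O(\delta)$ term.

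Beyond this gap, your route is genuinely different from the paper's, and it is worth seeing why the paper's is simpler. The paper multiplies equation $(\ref{plap4})$ by $\partial_t z$ and integrates over $I\times(s,t)$; the singular absorption then enters only through its primitive $G_\varepsilon(z)=\int_0^z g_\varepsilon$, which is a total time derivative, hence gets evaluated on a single time slice and bounded by $z^{1-\beta}/(1-\beta)$. Together with the gradient estimate this gives a uniform $L^2(I\times(\tau,T))$ bound on $\partial_t z$, and the Hölder-$\tfrac12$ time modulus follows by a mean-value argument plus Hölder's inequality. Integrating $g_\varepsilon$ to its antiderivative sidesteps the pointwise singularity of $g_\varepsilon(z)$ entirely, which is precisely the delicate point in your route. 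In exchange, your Kruzhkov argument is local, isolates the useful algebraic observation $\gamma\beta<1$, and would survive in settings where the energy identity is unavailable, so it is not without merit once the case analysis is tightened.
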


\begin{proof}
We first extend $z_{\varepsilon, \eta}$ by $\eta$ outside $I$, (still denoted as $z_{\varepsilon, \eta}$). To simplify the notation, we
denote again $z=z_{\varepsilon,\eta}$.
\\

Fix $\tau>0$. Multiplying equation $(\ref{plap4})$ by $\partial_{t}z$,
and using integration by parts yield
\begin{equation}
\int_{s}^{t}\int_{I}|\partial_{t}z|^{2}+a(z_{x})z_{x}\partial_{t}%
z_{x}+g_{\varepsilon}(z)\partial_{t}z + f(z)\psi_\varepsilon (z) \partial_{t}z  \hspace{0.05in}dxd\sigma=0, \quad \text{for }
t>s\geq\tau.
\label{plap18b}
\end{equation}
We observe that
\[
a(z_{x})z_{x}\partial_{t}z_{x}=\left(  |z_{x}|^{2}+\eta^{\alpha}\right)^{\frac{p-2}{2}}
.\frac{1}{2}\partial_{t}(|z_{x}|^{2})=\frac{1}{p}\partial_{t}(|z_{x}|^{2}+\eta^{\alpha
})^{\frac{p}{2}}.
\]
By this fact, we deduce from  equation $(\ref{plap18b})$
\[
\int_{s}^{t}\int_{I}|\partial_{t}z(x, \sigma)|^{2}dxd\sigma\leq\int_{I}\frac{1}{p}\left(
|z_{x}(x,s)|^{2}+\eta^{\alpha}\right)  ^{\frac{p}{2}}dx+\int_{I}
G_{\varepsilon}(z(x,s))dx  + \int_{I} H_\varepsilon (z(x,s)) dx,
\]
with
\begin{equation*}
\left\{
\begin{array}
[c]{lr}%
G_{\varepsilon}(r)=\displaystyle\int_{0}^{r}g_{\varepsilon}(s)ds\leq\displaystyle\int_{0}^{r}s^{-\beta
}ds=\frac{r^{1-\beta}}{1-\beta},
\\
\\
H_\varepsilon (r)= \displaystyle\int_{0}^{r}f(s)\psi_{\varepsilon}(s)ds\leq rf(r),\quad \text{since $f$ is nondecreasing, and $\psi_\varepsilon\leq 1$}. 
\end{array}
\right.  
\end{equation*}
Then, we obtain
\[
\int_{s}^{t}\int_{I}|\partial_{t}z(x,\sigma)|^{2} dxd\sigma\leq\frac{1}{p}\int_{I}\left(
|z_{x}(x,s)|^{2}+\eta^{\alpha}\right)  ^{\frac{p}{2}}dx+\frac{1}{1-\beta}%
\int_{I}z(x,s)^{1-\beta}dx+ \int_{I}z(x,s)f(z(x,s))  dx,
\]
or
\begin{align*}
\int_{s}^{t}\int_{I}|\partial_{t}z|^{2}dxd\sigma   \overset{(\ref{plap5})}{\leq} \frac{1}{p}\int_{I}\left(
\Vert z_{x}(s)\Vert_{\infty}^{2}+\eta^{\alpha}\right)  ^{\frac{p}{2}%
}dx+ \frac{1}{1-\beta}\int_{I}\left(  2\Vert z_{0}\Vert_{\infty}\right)  ^{1-\beta}dx   + 
\\
 \int_{I}  2\|z_0\|_{\infty}.  M^p_f( z_0) dx.
\end{align*}
We apply Young's inequality to  the first term in the right hand side to get
\begin{equation}
\int_{s}^{t}\int_{I}|\partial_{t}z|^{2}dxd\sigma\leq C_{6}\left(  \Vert
z_{x}(s)\Vert_{\infty}^{p}+\Vert z(s)\Vert_{\infty}^{1-\beta
}  + \|z_0\|_{\infty}.  M^p_f( z_0)\right)  +O(\eta), \label{plap18d}%
\end{equation}
with $C_{6}=C_{6}(\beta,p,|I|)$, and $\displaystyle\lim_{\eta\rightarrow0}O(\eta)=0$.
\\
 By $(\ref{plapgradient})$ (or $(\ref{plapgradient1})$), and $(\ref{plap18d})$, 
there is a constant $C_{7}(\beta,p,\tau,|I|,\Vert z_{0}\Vert_{\infty})>0$ such that
\begin{equation}
\int_{s}^{t}\int_{I}|\partial_{t}z|^{2}dxd\sigma\leq C_{7},\quad\forall
t>s\geq\tau. \label{plap18e}
\end{equation}
Estimate $(\ref{plap18e})$ means that $\Vert\partial_{t}z_{\varepsilon,\eta}\Vert_{L^{2}(I\times(s,t))}$ is
bounded by a constant, which is independent of $\varepsilon$ and $\eta$.
\\

 Next, for any $x,y\in I$ and for $t>s\geq \tau$, we set
\[
r=|x-y|+|t-s|^{\frac{1}{2}}.
\]
According to the Mean Value Theorem, there is a real number $\bar{x}\in I_r(y)$ such that
\begin{equation}\label{plap19}
|\partial_{t}z(\bar{x},\sigma)|^{2}=\frac{1}{|I_{r}(y)|}\int
_{I_{r}(y)}|\partial_{t}z(l,\sigma)|^{2}dl=  \frac{1}{2r}\int
_{I_{r}(y)\cap I}|\partial_{t}z(l,\sigma)|^{2}dl \leq  \frac{1}{2r}\int
_{ I}|\partial_{t}z(l,\sigma)|^{2}dl
\end{equation}
(Note that $\partial_t z(.,t)=0$ outside $I$).
\\
Next, we have  from Holder's inequality 
\[
|z(\bar{x},t)-z(\bar{x},s)|^{2}\leq(t-s)\int_{s}^{t}|\partial_{t}z(\bar
{x},\sigma)|^{2}d\sigma
  \overset{(\ref{plap19})}{\leq}   \frac{(t-s)}{2r}\int_{s}%
^{t}\int_{I}|\partial_{t}z(l,\sigma)|^{2}dld\sigma,
\]
or
\begin{equation}
|z(\bar{x},t)-z(\bar{x},s)|^{2}\leq  \frac{1}{2}(t-s)^{\frac{1}{2}}\int_{s}^{t}\int
_{I}|\partial_{t}z(l,\sigma)|^{2}dld\sigma. \label{plap18f}%
\end{equation}
From $(\ref{plap18e})$ and $(\ref{plap18f})$, we obtain
\begin{equation}
|z(\bar{x},t)-z(\bar{x},s)|^{2}\leq \frac{1}{2}C_{7} (t-s)^{\frac{1}{2}},\quad\forall
t>s\geq\tau. \label{plap18g}%
\end{equation}
Now, it is sufficient to show $(\ref{plap18a})$. Indeed, we have the triangular
inequality
\begin{align*}
|z(x,t)-z(y,s)|\leq   |z(x,t)-z(y,t)|+|z(y,t)-z(y,s)|\leq |z(x,t)-z(y,t)|+
\\
 |z(y,t)-z(\bar{x},t)|   + |z(\bar{x},t)-z(\bar{x},s)|+
  +     |z(\bar{x},s)-z(y,s)|,
\end{align*}
where $\bar{x}\in I_r(y)$ is above. Then, the conclusion $(\ref{plap18a})$ just
follows from $(\ref{plap18g})$, gradient estimates $(\ref{plapgradient})$, $(\ref{plapgradient1})$ and the Mean Value
Theorem. Or, we get  the proof of the above Proposition.
\end{proof}
\begin{remark}
The result of the above Proposition still holds for the case where $f$ is as in Lemma \ref{lemgradientestimate1} or Remark \ref{remLipbound}.
\end{remark}

Note that the estimates in the proof of Lemma \ref{lemgradientestimate} (resp. Lemma \ref{lemgradientestimate1})  and Proposition \ref{propLipschitz} are independent of $\eta$,  $\varepsilon$. This observation allows us to pass to the limit as $\eta\rightarrow0$ in order to get gradient estimates $(\ref{plapgradient})$ (resp. $(\ref{plapgradient2})$, $(\ref{plapgradient1})$)   for  the following problem 
\begin{equation}
(P_\varepsilon) \left\{
\begin{array}
[c]{lr}%
\partial_{t}z -  \partial_x \left(  |\partial_{x}z
|^{p-2}\partial_{x}z  \right) +g_{\varepsilon}(z)  + f(z) \psi_\varepsilon(z) =0 & \text{in}\hspace{0.05in}I\times(0,\infty),\\
z(-l,t)=z(l,t)=0 & \hspace{0.05in}%
t\in(0,\infty),\\
z(x,0)=z_{0}(x) & \text{on}\hspace{0.05in}I.
\end{array}
\right.  \label{plap17}%
\end{equation}

\begin{theorem}
\label{theepsilon} Let $0\leq z_{0}\in \mathcal{C}^\infty_c(I)$, $z_0\not=0$. Assume $f$ as in Lemma $\ref{lemgradientestimate}$.  Then, there
exists a unique bounded weak solution $z_{\varepsilon}$ of problem $(P_\varepsilon)$.
Furthermore, $z_\varepsilon$ also fulfills estimate $(\ref{plapgradient})$, 
and the regularity result $(\ref{plap18a})$. 
\end{theorem}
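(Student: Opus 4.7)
The plan is to obtain $z_\varepsilon$ as the locally uniform limit, as $\eta\to 0^+$, of the classical solutions $z_{\varepsilon,\eta}$ of the doubly regularized problem $(P_{\varepsilon,\eta})$, and then to inherit all the listed properties by passing to the limit. I would first fix $\varepsilon>0$ and observe that, by Lemma \ref{lemgradientestimate} and Proposition \ref{propLipschitz}, the family $\{z_{\varepsilon,\eta}\}_{\eta\in(0,\eta_0)}$ is uniformly bounded on $\overline{I}\times(0,\infty)$ and uniformly equicontinuous on every $\overline{I}\times[\tau,T]$ with $0<\tau<T$, with moduli independent of $\eta$. A standard Ascoli--Arzelà argument combined with a diagonal extraction yields a subsequence $\eta_n\to 0^+$ and $z_\varepsilon\in C(\overline{I}\times(0,\infty))$ with $0\leq z_\varepsilon\leq 2\|z_0\|_\infty$ such that $z_{\varepsilon,\eta_n}\to z_\varepsilon$ locally uniformly. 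Since the constants in the gradient estimate (\ref{plapgradient}) and in the Hölder--Lipschitz estimate (\ref{plap18a}) depend only on $\beta,p,\tau,|I|$ and $\|z_0\|_\infty$, the limit $z_\varepsilon$ automatically satisfies both estimates; the pointwise bound on $|\partial_x z_\varepsilon|$ follows from Rademacher plus the uniform spatial Lipschitz estimate.

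Second, to verify that $z_\varepsilon$ is a weak solution of $(P_\varepsilon)$, I would start from the distributional identity satisfied by $z_{\varepsilon,\eta_n}$,
\[
\int_0^\infty\!\!\int_I \Bigl( -z_{\varepsilon,\eta_n}\phi_t + a(\partial_x z_{\varepsilon,\eta_n})\partial_x z_{\varepsilon,\eta_n}\,\phi_x + g_\varepsilon(z_{\varepsilon,\eta_n})\phi + f(z_{\varepsilon,\eta_n})\psi_\varepsilon(z_{\varepsilon,\eta_n})\phi\Bigr)\,dx\,dt=0,
\]
for every $\phi\in\mathcal{C}_c^\infty(I\times(0,\infty))$. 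The terms involving $\phi_t$, $g_\varepsilon(z_{\varepsilon,\eta_n})\phi$ and $f(z_{\varepsilon,\eta_n})\psi_\varepsilon(z_{\varepsilon,\eta_n})\phi$ pass to the limit by dominated convergence, since for fixed $\varepsilon$ both $g_\varepsilon$ and $f\psi_\varepsilon$ are continuous and globally bounded on $[0,2\|z_0\|_\infty]$. The genuine difficulty, and the main obstacle, is the nonlinear diffusion: the uniform $L^\infty_{loc}$ bound on $\partial_x z_{\varepsilon,\eta_n}$ only gives weak-$\ast$ convergence a priori, whereas the map $s\mapsto (s^2+\eta^\alpha)^{(p-2)/2}s$ is nonlinear. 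I would upgrade this to strong convergence of $\partial_x z_{\varepsilon,\eta_n}$ in $L^p_{loc}$ via the classical Minty--Browder monotonicity trick: testing the equation for $z_{\varepsilon,\eta_n}$ with $(z_{\varepsilon,\eta_n}-z_\varepsilon)\xi$ for a smooth cutoff $\xi$, using the pointwise convergence $a(s)s\to|s|^{p-2}s$ and the strict monotonicity of the $p$-Laplacian, forces a.e.\ convergence of $\partial_x z_{\varepsilon,\eta_n}$ and then $L^p_{loc}$ convergence by dominated convergence using the uniform gradient bound. Once this is secured, the diffusion term passes to the limit and $z_\varepsilon$ satisfies (\ref{plap17}) in $\mathcal{D}'$.

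Third, for uniqueness of bounded weak solutions of $(P_\varepsilon)$ I would exploit that, with $\varepsilon$ frozen, both $g_\varepsilon$ and $f\psi_\varepsilon$ are Lipschitz on $[0,2\|z_0\|_\infty]$: indeed $g_\varepsilon$ vanishes on $[0,\varepsilon]$ and is smooth on $[\varepsilon,\infty)$, while $f\psi_\varepsilon$ is $\mathcal{C}^1$ (resp.\ locally Lipschitz under the $(H_2)$ extension via mollification of Remark \ref{remextend}). Given two bounded weak solutions $z^{(1)},z^{(2)}$, I would subtract the equations, test with the positive part $(z^{(1)}-z^{(2)})_+$, drop the principal term using the monotonicity inequality $\bigl(|a|^{p-2}a-|b|^{p-2}b\bigr)(a-b)\geq 0$, and close the estimate with Gronwall against the Lipschitz constants of $g_\varepsilon+f\psi_\varepsilon$; symmetry then yields $z^{(1)}=z^{(2)}$. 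The hardest step in the whole argument remains the Minty passage in the diffusion term, because the cutoff $\xi$ must be chosen so that no boundary contribution appears in the test function $(z_{\varepsilon,\eta_n}-z_\varepsilon)\xi$ and the initial trace is approached correctly as $\tau\to 0^+$.
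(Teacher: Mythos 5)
Your plan is correct in spirit, but it takes a noticeably longer route than the paper. The paper's proof of Theorem~\ref{theepsilon} is essentially two lines: it cites classical references (\cite{WuJingHui}, \cite{Herrero}, \cite{Zhao}) for the existence and uniqueness of the weak solution of $(P_\varepsilon)$ --- which is, for fixed $\varepsilon>0$, a non-singular quasilinear $p$-Laplacian problem with bounded Lipschitz-type nonlinearity $g_\varepsilon+f\psi_\varepsilon$, smooth compactly supported data, and zero Dirichlet condition, hence well within standard theory --- and then observes that estimates $(\ref{plapgradient})$ and $(\ref{plap18a})$, whose constants are $\eta$-independent, pass to $z_\varepsilon$ by letting $\eta\to0$ and invoking uniqueness to identify the limit. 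You, by contrast, reconstruct existence from scratch: Ascoli--Arzel\`a on $\{z_{\varepsilon,\eta}\}$, a Minty--Browder passage in the diffusion term, and a Gronwall argument for uniqueness (which in fact mirrors the paper's own Appendix Lemma~\ref{uniqueness}, which is stated precisely for equation $(\ref{plap17})$). What your approach buys is self-containedness; what it costs is extra technical care in the Minty step, since the operator $a_\eta(s)s=(s^2+\eta^\alpha)^{(p-2)/2}s$ itself depends on $\eta$, and the competitor in the monotonicity inequality should be an arbitrary smooth $\phi$ rather than the not-yet-verified limit $z_\varepsilon$ --- using $(z_{\varepsilon,\eta_n}-z_\varepsilon)\xi$ directly is a nonstandard variant that needs a bit of justification. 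A cleaner way to close that step, consistent with what the paper itself does later in Section~3 for the $n\to\infty$ and $\varepsilon\to0$ limits, would be to invoke the Boccardo--Murat almost-everywhere convergence of gradients (\cite{BoMu}, \cite{BoGa}) in place of the bespoke Minty argument. No genuine gap, just a more elaborate path to the same destination.
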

\begin{remark}\label{rem20}
The result of Theorem $\ref{theepsilon}$ also holds if $f$ is assumed as in Lemma \ref{lemgradientestimate1} (resp. Remark \ref{remLipbound}). Moreover,  $z_\varepsilon$ fulfills estimate $(\ref{plapgradient1})$ (resp. $(\ref{plapgradient2})$).
\end{remark}
\begin{proof}
 The existence and uniqueness of solution of
problem $(P_\varepsilon)$ is a classical result (see e.g \cite{WuJingHui}, \cite{Herrero}, and  \cite{Zhao}). Thanks to Lemma $\ref{lemgradientestimate}$ and the uniqueness result,  Theorem \ref{theepsilon} follows by passing $\eta\rightarrow 0$. 
\end{proof}

\section{Existence of a maximal solution}

\hspace{0.2in} In this section, we focus on the proof of Theorem $\ref{theplapexistence}$ (Theorem  $\ref{theplapexistence1}$ is proved similarly). Then, we   divide   the proof of Theorem $\ref{theplapexistence}$ into three steps.
In the first step, we prove the existence and uniqueness of solution
$u_{\varepsilon}$ of problem $(P_\varepsilon)$ with initial data $u_{0}\in
L^{1}(\Omega)$. Moreover,  we prove an estimate for $|\partial_x  u_\varepsilon|$ involving $u_\varepsilon^{1-1/\gamma}$ and $\|u_0\|_{L^1(I)}$, see Theorem \ref{theepsilon1} below. After that,  we will go to the limit as
$\varepsilon\rightarrow0$ in order to get $u_\varepsilon\rightarrow u$, a solution of equation $(\ref{plap1})$. Finally, the conclusion   that $u$ is a maximal solution is proved   in Proposition
\ref{promaxsolution} below. 
\\

We first have the following result:

\begin{theorem}\label{theepsilon1} 
Let  $0\leq u_{0}\in L^{1}(I),$ $u_{0}\not=0$. Assume that $f$ satisfies $(H_2)$. Then,
there exists a unique weak solution $u_{\varepsilon}$ of problem
$(P_\varepsilon)$ with initial data $u_0$. Moreover, for any $\tau>0$, there is a constant $C=C(\beta,p,|I|)>0$ such that
\begin{equation}
|\partial_{x}u_{\varepsilon}(x,t)|\leq C . u_{\varepsilon}^{1-\frac{1}{\gamma}}(x,t)\left(  \tau^{-\frac{\lambda+\beta+1}{\lambda p}%
}\Vert u_{0}\Vert_{L^{1}(I)}^{\frac{1+\beta}{\lambda}} +  \tau^{-\frac{\beta}{p}}\Vert u_{0}\Vert_{L^{1}(I)}^{\frac{\beta}{\lambda}} . m_f(\tau, u_0) +  1  \right), \label{plap20}
\end{equation}
for a.e $(x,t)\in(\tau,\infty)$, recall here   \hspace{0.05in} $m_f(t, u_0)= f^\frac{1}{p}\left(2C(p, |I|). t^{-\frac{1}{\lambda}}\|u_0\|^{\frac{p}{\lambda}}_{L^1(I)} \right)$.
\\

As a consequence of  $(\ref{plap20})$ and Proposition \ref{propLipschitz}, $u_{\varepsilon}$ is a Lipschitz function on $\overline{I}\times[t_1,t_2]$, for any $0<t_1<t_2<\infty$. Moreover, the Lipschitz constant of $u_\varepsilon$ is independent of $\varepsilon$.

\end{theorem}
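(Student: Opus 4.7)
The strategy is to upgrade Theorem~\ref{theepsilon}, which supplies a solution and the gradient estimate \eqref{plapgradient1} for smooth compactly supported data, to the $L^1$ setting by combining approximation, an $L^1$--$L^\infty$ smoothing effect for the $p$-Laplacian, and a semigroup (restart) argument.

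First I would approximate: choose $\{u_{0,n}\}\subset\mathcal{C}_c^\infty(I)$ with $0\le u_{0,n}\to u_0$ in $L^1(I)$, and let $u_{\varepsilon,n}$ be the corresponding bounded weak solutions of $(P_\varepsilon)$ furnished by Theorem~\ref{theepsilon} (invoking Remark~\ref{rem20} to use Lemma~\ref{lemgradientestimate1} under the monotonicity hypothesis $(H_2)$). The standard $L^1$-contraction for $p$-Laplacian problems with monotone non-negative absorption---obtained by testing the difference of the equations for $u_{\varepsilon,n}$ and $u_{\varepsilon,m}$ against a smooth approximation of $\operatorname{sgn}(u_{\varepsilon,n}-u_{\varepsilon,m})$, discarding the non-negative $p$-Laplacian contribution, and using the monotonicity of the absorption---gives
\begin{equation*}
\|u_{\varepsilon,n}(t)-u_{\varepsilon,m}(t)\|_{L^1(I)}\le\|u_{0,n}-u_{0,m}\|_{L^1(I)},\qquad t\ge 0,
\end{equation*}
so $\{u_{\varepsilon,n}\}$ is Cauchy in $\mathcal{C}([0,\infty);L^1(I))$ and produces a candidate limit $u_\varepsilon$; the same argument yields uniqueness.

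The quantitative engine is the $L^1$--$L^\infty$ smoothing effect for the one-dimensional $p$-Laplacian,
\begin{equation*}
\|u_{\varepsilon,n}(t)\|_{L^\infty(I)}\le C(p,|I|)\,t^{-1/\lambda}\,\|u_{0,n}\|_{L^1(I)}^{p/\lambda},\qquad\lambda=2(p-1),
\end{equation*}
which I would derive by Moser iteration applied to the inequality $\partial_t u_{\varepsilon,n}-\partial_x(|(u_{\varepsilon,n})_x|^{p-2}(u_{\varepsilon,n})_x)\le 0$, valid because both absorption terms are non-negative (equivalently, compare with an explicit Barenblatt-type supersolution of the pure $p$-Laplacian). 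Now I restart at time $\tau/2$: the translate $\tilde u_{\varepsilon,n}(x,s):=u_{\varepsilon,n}(x,s+\tau/2)$ solves $(P_\varepsilon)$ with $L^\infty$ datum bounded by the smoothing estimate. After a short $x$-mollification of this datum to meet the $\mathcal{C}_c^\infty$ hypothesis, applying \eqref{plapgradient1} to $\tilde u_{\varepsilon,n}$ at time $s=t-\tau/2\ge\tau/2$ and substituting the smoothing bound for $\|\tilde u_{\varepsilon,n}(\cdot,0)\|_\infty$ yields \eqref{plap20}: the first term contributes the exponent $-1/p-(1+\beta)/(\lambda p)=-(\lambda+\beta+1)/(\lambda p)$ in $\tau$ and the power $(1+\beta)/\lambda$ in $\|u_{0,n}\|_{L^1}$, while the middle term produces $m_f(\tau,u_{0,n})$ through $M_f(\tilde u_{\varepsilon,n}(\cdot,0))\le f^{1/p}\!\bigl(2C(p,|I|)\tau^{-1/\lambda}\|u_{0,n}\|_{L^1}^{p/\lambda}\bigr)$.

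The limit $n\to\infty$ is then routine: the uniform (in $n$) pointwise gradient bound \eqref{plap20}, together with the time-Holder regularity of Proposition~\ref{propLipschitz} applied to the restarted solutions, yields a uniform Lipschitz bound on $\overline I\times[\tau,T]$ for every $0<\tau<T$. Arzela--Ascoli combined with the $L^1$-convergence gives local uniform convergence $u_{\varepsilon,n}\to u_\varepsilon$, and the monotonicity of the $p$-Laplacian operator allows passage to the limit in the weak formulation, so all estimates carry over. The principal difficulty, in my view, is the careful derivation of the $L^1$--$L^\infty$ smoothing constant $C(p,|I|)$ with the exponent $\lambda=2(p-1)$ for the homogeneous Dirichlet problem in the presence of the regularized singular absorption; once this is in place, the rest is bookkeeping with the exponents $\beta,p,\gamma,\lambda$.
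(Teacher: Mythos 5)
Your overall strategy matches the paper's: approximate $u_0$ by smooth compactly supported data, exploit the $L^1$--$L^\infty$ smoothing of the $p$-Laplacian (the paper compares $u_{\varepsilon,n}$ with the solution of the pure $p$-Laplacian problem and cites DiBenedetto's Theorem~4.3; your Moser-iteration or Barenblatt-supersolution route is an equivalent alternative), restart at an intermediate time to invoke \eqref{plapgradient1}, and pass to the limit via Arzel\`a--Ascoli plus the pointwise gradient convergence.

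There is, however, a genuine gap in your existence/uniqueness mechanism. You invoke ``standard $L^1$-contraction for $p$-Laplacian problems with monotone non-negative absorption,'' but the regularized term $g_\varepsilon(s)=s^{-\beta}\psi_\varepsilon(s)$ is \emph{not} monotone: it vanishes for $s\le\varepsilon$, rises to a bump on $(\varepsilon,2\varepsilon)$, and then decreases like $s^{-\beta}$. Hence discarding its contribution by sign is not legitimate, and the clean contraction $\|u_{\varepsilon,n}(t)-u_{\varepsilon,m}(t)\|_{L^1}\le\|u_{0,n}-u_{0,m}\|_{L^1}$ is false in general. The paper's Lemma~\ref{uniqueness} instead exploits only that $g_\varepsilon$ is globally Lipschitz, and closes the comparison with Gronwall's inequality, which produces an $e^{C(\varepsilon)t}$ factor---sufficient for uniqueness and for the Cauchy property on each $[0,T]$, but not uniform in $\varepsilon$. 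For the $\mathcal{C}([0,\infty);L^1(I))$ conclusion the paper is even more careful: it tests against $T_1(u_{\varepsilon,n}-u_{\varepsilon,m})$, uses monotonicity only for the $p$-Laplacian and for $f\psi_\varepsilon$ (which \emph{is} nondecreasing under $(H_2)$), and absorbs the $g_\varepsilon$ mismatch as an $o(n,m)$ term using the $L^1$-convergence $(\ref{plap36})$, which is itself obtained only after the Ascoli--Arzel\`a step. So your argument's ordering has to change: you cannot get $L^1$-Cauchy-ness as a free first step; you must first secure the local uniform convergence from the gradient bounds, and then feed that back into the truncation estimate. Apart from this, the restart/smoothing bookkeeping in your proposal is correct (and, incidentally, the paper has a typo in the statement of \eqref{plap20}: the $\tau$-exponent on the middle term should be $-\beta/(\lambda p)$, consistent with $(\ref{plap29})$, as your own exponent count confirms).
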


\begin{proof}
\textbf{(i) Uniqueness:}
 The uniqueness result follows from the Lemma below.
\begin{lemma}\label{uniqueness} 
Let $v_1$ (resp. $v_2$) be a weak sub-solution (resp. super solution) of equation $(\ref{plap17})$. Then, we have
  \[
v_1 \leq v_2, \quad \text{in } I\times(0,\infty).  
  \]
\end{lemma}
\begin{proof}
We skip the proof of Lemma $\ref{uniqueness}$ and give its proof in the Appendix.
\end{proof}
\\

\textbf{(ii) Existence:} We regularize initial data $u_0$ by
considering a sequence,  $\{u_{0,n}\}_{n\geq1}\subset \mathcal{C}%
_{c}^{\infty}(I)$  such that
$u_{0,n}\overset{n\rightarrow\infty}{\longrightarrow}u_{0}$ in 
$L^{1}(I)$, and $\Vert u_{0,n}\Vert_{L^{1}(I)}\leq\Vert u_{0}
\Vert_{L^{1}(I)}$.
Let $u_{\varepsilon,n}$ be a unique (weak) solution of equation  $(\ref{plap17})$ with initial data $u_{0,n}$  (see
e.g \cite{Herrero}, \cite{Zhao}, and \cite{WuJingHui}).
We will  show that $u_{\varepsilon,n}$ converges to $u_{\varepsilon}$, which is
a solution of equation $(\ref{plap17})$ with initial data $u_0$. 
\\

First of all, we  observe that
 $u_{\varepsilon,n}$ is a sub-solution of the following
equation
\begin{equation}
\left\{
\begin{array}
[c]{lr}%
\partial_{t}v_{n}-\left(  |\partial_{x}v_{n}|^{p-2}\partial_{x}v_{n}\right)
_{x}=0 & \text{in}\hspace{0.05in}I\times(0,\infty),\\
v_{n}(-l,t)=v_{n}(l,t)=0 & \hspace{0.05in}\forall t\in(0,\infty),\\
v_{n}(x,0)=u_{0,n}(x) & \text{in}\hspace{0.05in} I,
\end{array}
\right.  \label{plap22}%
\end{equation}
thereby
\begin{equation}
u_{\varepsilon,n}\leq v_{n},\quad\text{in }I\times(0,\infty).\label{plap23}%
\end{equation}
Moreover, there is a positive constant $C(p, |I|)$ such that
\begin{equation}
\|v_{n}(.,t)\|_{\infty}  \leq C(p,|I|). t^{-\frac{1}{\lambda}} 
  \| v_{n}(0) \|_{L^{1}(I)}^{\frac{p}{\lambda}}\leq  C(p,|I|). t^{-\frac{1}{\lambda}} 
    \| u_0 \|_{L^{1}(I)}^{\frac{p}{\lambda}}
,\quad\forall t>0,\label{plap24}%
\end{equation}
(see, e.g  Theorem $4.3$, \cite{Dibe}), so we get from $(\ref{plap23})$ and $(\ref{plap24})$
\begin{equation}
\| u_{\varepsilon,n}(.,t)\|_{\infty}  \leq C(p,|I|) . t^{-\frac{1}{\lambda}}\Vert
u_{0}\Vert_{L^{1}(I)}^{\frac{p}{\lambda}},\quad\forall t>0.\label{plap27}%
\end{equation}

For any  $\tau>0$,  inequality $(\ref{plap27})$ means that $\|u(t)\|_{\infty}$ is bounded for $t\geq\tau$.
Then,  we can apply Theorem $\ref{theepsilon}$ to $u_{\varepsilon
,n}$ by considering $u_{\varepsilon,n}(\tau)$ as the initial data
 in order to get
\[
|\partial_{x}u_{\varepsilon,n}(x,t)|\leq C(\beta,p)u_{\varepsilon,n}%
^{1-\frac{1}{\gamma}}(x,t)\left(  (t-\tau)^{-\frac{1}{p}}\Vert
u_{\varepsilon,n}(\tau)\Vert_{\infty}^{\frac{1+\beta}{p}%
}+    
\Vert u_{\varepsilon,n}(\tau)\Vert_{\infty}^{\frac{\beta}{p}} .
f^{\frac{1}{p}}(2\Vert u_{\varepsilon,n}(\tau)\Vert_{\infty})  +1\right),
\]
for a.e $(x,t)\in I\times(\tau,\infty)$. In particular, we obtain for a.e $(x,t)\in I\times(2\tau,\infty)$
\begin{equation}
|\partial_{x}u_{\varepsilon,n}(x,t)|\leq C u_{\varepsilon,n}%
^{1-\frac{1}{\gamma}}(x,t)\left(  \tau^{-\frac{1}{p}}\Vert
u_{\varepsilon,n}(\tau)\Vert_{\infty}^{\frac{1+\beta}{p}%
}+    
\Vert u_{\varepsilon,n}(\tau)\Vert_{\infty}^{\frac{\beta}{p}} .
f^{\frac{1}{p}}\left( 2\Vert u_{\varepsilon,n}(\tau)\Vert_{\infty}  \right)  +1\right).\label{plap28}
\end{equation}
Recall   $m_f(t, u_0)= f^\frac{1}{p}\left(2C(p, |I|). t^{-\frac{1}{\lambda}}\|u_0\|^{\frac{p}{\lambda}}_{L^1(I)} \right)$. Combining $(\ref{plap27})$ and $(\ref{plap28})$ yields
\begin{equation}\label{plap29}
|\partial_{x}u_{\varepsilon,n}(x,t)|\leq C . u_{\varepsilon
,n}^{1-\frac{1}{\gamma}}(x,t)\left(  \tau^{-\frac{\lambda+\beta+1}{\lambda p}
}\Vert u_{0}\Vert_{L^{1}(I)}^{\frac{1+\beta}{\lambda}} +  \tau^{-\frac{\beta}{\lambda p}}\Vert u_{0}\Vert_{L^{1}(I)}^{\frac{\beta}{\lambda}} . m_f(\tau, u_0)    +1     \right),
\end{equation}
for a.e $(x,t)\in(2\tau,\infty)$.
In view of  $(\ref{plap29})$, 
$|\partial_{x}u_{\varepsilon,n}(x,t)|$ is bounded on $I\times[2\tau, \infty)$ by a positive
constant being independent of $\varepsilon$ and $n$.
Thanks to Proposition $\ref{propLipschitz}$,
we have
\begin{equation}
|u_{\varepsilon,n}(x,t)-u_{\varepsilon,n}(y,s)|\leq C\left(
|x-y|+|t-s|^{\frac{1}{2}}\right)  ,\quad\forall x,y\in\overline{I}%
,\hspace{0.05in}\forall t,s>2\tau.\label{plap29b}%
\end{equation}
Note that $C$ in $(\ref{plap29b})$ only depends on $\beta,p,\tau,|I|$, and $\Vert u_{0}\Vert_{L^{1}%
(I)}$ (instead of $\Vert u_{0}\Vert_{L^{\infty}(I)}$ as in Proposition
$\ref{propLipschitz}$). 
\\
 
Now, we can pass to the limit as $n\rightarrow \infty$. To avoid relabeling after any passage to
the limit, we want to keep the same label. Then, we observe that  $(\ref{plap29b})$ allows us to apply the Ascoli-Arzela Theorem to
$u_{\varepsilon,n}$, so there is a subsequence of $\{u_{\varepsilon
,n}\}_{n\geq1}$ such that for any $2\tau<t_1<t_2<\infty$
\begin{equation}\label{plap33b}
u_{\varepsilon,n}\overset{n\rightarrow\infty}{\longrightarrow}u_{\varepsilon
},\quad\text{uniformly on compact set }\overline{I}\times[t_1, t_2].
\end{equation}
It follows from  the diagonal argument 
that there is a subsequence of $\{u_{\varepsilon,n}\}_{n\geq1}$ such
that
\begin{equation*}
u_{\varepsilon,n}(x,t)\overset{n\rightarrow\infty}{\longrightarrow
}u_{\varepsilon}(x,t),\quad\text{pointwise in }\overline{I}\times
(0,\infty).
\end{equation*}
Thus, it is clear that  $u_\varepsilon$ also fulfills the a priori bound $(\ref{plap27})$ and the  Lipschitz continuity $(\ref{plap29b})$.
\\

After that, we show that for any $T\in(0,\infty)$
\begin{equation} \label{plap36}
g_\varepsilon(u_{\varepsilon, n})  \overset{n\rightarrow\infty}{\longrightarrow}  g_\varepsilon(u_\varepsilon), \quad \text{in }
L^1(I\times(0,T)).
\end{equation}
In fact,   $g_{\varepsilon}(.)$ is a  global Lipschitz function, and it is  bounded by $\varepsilon^{-\beta}$. 
Therefore, the Dominated Convergence Theorem yields the conclusion $(\ref{plap36})$.
\\

Next, we claim that for any $0<t_1<t_2<\infty$
\begin{equation}\label{plap43}
f(u_{\varepsilon,n})\psi_\varepsilon(u_{\varepsilon, n})  \overset{n\rightarrow\infty}{\longrightarrow} f(u_{\varepsilon}) \psi_\varepsilon(u_\varepsilon), \quad \text{in }
L^1(I\times(t_1, t_2)).
\end{equation}
According to   $(\ref{plap27})$ and  the fact $f\in \mathcal{C}(\mathbb{R})$,  we observe that $f(u_{\varepsilon, n}(x,t))$ is  bounded on $I\times(t_1, \infty)$ by a constant not depending on $\varepsilon, n$. By applying Dominated Convergence Theorem,  we get claim $(\ref{plap43})$. 
\\

Besides,  the contraction of $L^1$-norm gives us
\begin{equation}\label{plap37}
\|g_{\varepsilon}(u_{\varepsilon,n})\|_{L^1(I\times (0,\infty))}, \quad \|f(u_{\varepsilon, n})\psi_\varepsilon (u_{\varepsilon, n})\|_{L^1(I\times (0,\infty))}  \leq \|u_0\|_{L^1(I)}.
\end{equation}
It follows from $(\ref{plap37})$, $(\ref{plap43})$, and  $(\ref{plap36})$ that  
\begin{equation}\label{plap42}
\|g_{\varepsilon}(u_{\varepsilon})\|_{L^1(I\times(0,\infty))}, \quad \| f(u_{\varepsilon}) \psi_\varepsilon(u_\varepsilon)\|_{L^1(I\times(0,\infty))} \leq \|u_0\|_{L^1(I)}.
\end{equation}

Next, we show that  there is a subsequence of 
$\{u_{\varepsilon,n}\}_{ n\geq 1}$ such that
\begin{equation}\label{plap34a}
\partial_{x}u_{\varepsilon,n} (x,t) \overset{n\rightarrow\infty}{\longrightarrow}\partial_{x}u_{\varepsilon}(x,t), \quad \text{for a.e }   
(x,t)\in I\times(0,\infty). 
\end{equation}
In order to prove this,  we  borrow a result of L. Boccardo and F. Murat,
\cite{BoMu} and \cite{BoGa}, the so called \textit{almost everywhere convergence of the
gradients}. In fact, thanks to $(\ref{plap37})$, $(\ref{plap29})$, and  $(\ref{plap33b})$, we can imitate the proof in \cite{BoGa}, or \cite{BoMu} to get 
\[
\partial_{x}u_{\varepsilon,n} (x,t) \overset{n\rightarrow\infty}{\longrightarrow}\partial_{x}u_{\varepsilon}(x,t), \quad \text{for a.e }   
(x,t)\in I\times(t_1, t_2),
\]
up to a subsequence, for any $0< t_1<t_2 $. Then, the claim $(\ref{plap34a})$ just follows from the diagonal argument.
As a consequence, $u_\varepsilon$ also fulfills estimate $(\ref{plap29})$, and we have for any   $0<t_1<t_2$
\begin{equation}\label{plap34}
\partial_{x}u_{\varepsilon,n}\overset{n\rightarrow\infty}{\longrightarrow
}\partial_{x}u_{\varepsilon},\quad\text{in }L^{q}(I\times(t_1
, t_2)), \quad \text{for any } q\geq 1.
\end{equation}
By $(\ref{plap34})$, $(\ref{plap36})$, and $(\ref{plap43})$, we observe that $u_\varepsilon$ satisfies equation $(\ref{plap1})$ in the weak sense.
Then, it remains to show that 
\begin{equation}
u_{\varepsilon}\in\mathcal{C}([0,T];L^{1}(I)),\quad\text{for any }%
T\in(0,\infty).\label{plap41}
\end{equation}
Let us set
\begin{align*}
& T_k(u)=\left\{
\begin{array}
[c]{lr}%
u, &
\text{if }|u|\leq k,\\
k. sign(u),& \text{if }|u|> k,  
\end{array} \text{and}
\right.  
\\
& S_k(u)=\int^{u}_0  T_k(s) ds  =  \frac{1}{2}|u|^2\chi_{\{|u|\leq k\}}+  k(|u|-\frac{1}{2}k) \chi_{\{|u|> k\}}.
\end{align*}
We consider the difference between two equations satisfied by $u_{\varepsilon,n}$ and $u_{\varepsilon,m}$:
\begin{align*}
& \partial_t (u_{\varepsilon,n}-u_{\varepsilon,m})- \partial_x \left(  |\partial_{x}u_{\varepsilon,n}
|^{p-2}\partial_{x}u_{\varepsilon,n} \right) 
+ \partial_x \left(  |\partial_{x}u_{\varepsilon,m}
|^{p-2}\partial_{x}u_{\varepsilon,m} \right)
\\
& g_{\varepsilon}(u_{\varepsilon,n})  -g_{\varepsilon}(u_{\varepsilon,m})  +  f(u_{\varepsilon,n})\psi_\varepsilon(u_{\varepsilon, n}) -f(u_{\varepsilon,m})\psi_\varepsilon(u_{\varepsilon, m}) =0.
\end{align*}
Multiplying the above equation with $T_1(u_{\varepsilon, n}-u_{\varepsilon,m})$, and integrating on $I\times(0,t)$ yields
\begin{align*}
&\int_I S_1\left( u_{\varepsilon,n}-u_{\varepsilon,m}\right)(t) dx  + \int^t_{0}\int_I  \left(  |\partial_{x}u_{\varepsilon,n}|^{p-2}\partial_{x}u_{\varepsilon,n} 
-  |\partial_{x}u_{\varepsilon,m}|^{p-2}\partial_{x}u_{\varepsilon,m} \right)(\partial_{x}u_{\varepsilon,n}-\partial_{x}u_{\varepsilon,m}) dx ds
\\
&+\int^t_{0}\int_I \left( g_{\varepsilon}(u_{\varepsilon,n})  -g_{\varepsilon}(u_{\varepsilon,m}) \right)T_1(u_{\varepsilon,n}-u_{\varepsilon,m}) dxds + 
\\
&\int^t_{0}\int_I \left(f(u_{\varepsilon,n})\psi_\varepsilon(u_{\varepsilon, n}) -f(u_{\varepsilon,m})\psi_\varepsilon(u_{\varepsilon, m})  \right) T_1(u_{\varepsilon,n}-u_{\varepsilon,m}) dxds =
\int_I S_1\left( u_{\varepsilon,n}-u_{\varepsilon,m}\right)(0) dx.
\end{align*}
By the monotone of $p$-Laplacian operator and the monotone of the function $f \psi_\varepsilon$, we have
\begin{align*}
& \int_I S_1\left( u_{\varepsilon,n}-u_{\varepsilon,m}\right)(t) dx  \leq \int_I | u_{0,n} -u_{0,m}| dx  +
 \int^t_{0}\int_I | g_{\varepsilon}(u_{\varepsilon,n})  -g_{\varepsilon}(u_{\varepsilon,m}) | dxds  \overset  {(\ref{plap36})}{=}  o(n,m),
\end{align*}
where \hspace{0.05in} $\displaystyle \lim_{n,m\rightarrow\infty}  o(n,m)=0$.
Moreover, we have from the formula of $S_1(.)$ and Holder's inequality
\begin{equation*}
\int_{I} | u_{\varepsilon,n}-u_{\varepsilon,m}| (x,t)dx\leq\sqrt{2|I|\int_{I}S_1( u_{\varepsilon,n}-u_{\varepsilon,m}) (x,t)  dx}+2\int
_{I}S_1( u_{\varepsilon,n}-u_{\varepsilon,m})(x,t))dx,\quad\forall t>0.
\end{equation*}
Combining the two last inequalities yields
\begin{equation}
\int_{I} |u_{\varepsilon,n}-u_{\varepsilon,m}|(x,t)dx\leq C(|I|).\left(  \sqrt{o(n,m)}+o(n,m)\right)
,\quad\forall t>0.\label{plap46}
\end{equation}
Thus, $\{u_{\varepsilon,n}\}_{n\geq 1}$ is a Cauchy sequence in $\mathcal{C}([0,T]; L^1(I))$, or we get $(\ref{plap41})$.
This puts an end to the proof of Theorem \ref{theepsilon1}.
\end{proof}
\\

In the second step,  we will  pass
to the limit as $\varepsilon\rightarrow0$. Let us first claim that  $\{u_{\varepsilon}\}_{\varepsilon>0}$ is a non-decreasing sequence, so there is a nonnegative function $u$ such that $u_{\varepsilon}(x,t)\downarrow u(x,t)$ as $\varepsilon\rightarrow 0$.
Indeed, for any $\varepsilon>\varepsilon^{\prime}>0$, it is clear that
$g_{\varepsilon^{\prime}}(s)  \geq g_{\varepsilon}(s)$, and  $\psi_{\varepsilon^{\prime}}(s)  \geq \psi_{\varepsilon}(s)$\quad for $s\in\mathbb{R}$.
Therefore, $u_{\varepsilon}$ is a super-solution of equation satisfied
by $u_{\varepsilon^{\prime}}$, so Lemma \ref{uniqueness} yields
\begin{equation*}
u_{\varepsilon}(x,t)\geq u_{\varepsilon^{\prime}}(x,t),\quad\text{in }%
I\times(0,\infty), 
\end{equation*}
or  the claim follows. We would like to emphasize  that  the monotonicity of
$\{u_{\varepsilon}\}_{\varepsilon>0}$ will  be intensively used  in what
follows, although one can utilize Ascoli-Azela Theorem to show that $u_\varepsilon\rightarrow u$. Note that $u$ is also a Lipschitz function on $\overline{I}\times [t_1, t_2]$, for any $0<t_1<t_2$.
\\

To be similar to $(\ref{plap34a})$, we obtain \hspace{0.05in}
$\partial_x u_{\varepsilon}\overset{\varepsilon\rightarrow0}{\longrightarrow
}\partial_{x}u$,  for a.e $(x,t)\in I\times(0,\infty)$. As a result, $u_x$  fulfills estimate $(\ref{plap29})$, and 
\begin{equation} \label{plap46a}
\partial_{x}u_{\varepsilon}\overset{\varepsilon\rightarrow0}{\longrightarrow
}\partial_{x}u,\quad\text{in }L^{q}(I\times(t_1, t_2)),\quad \forall q\geq1,
\end{equation}
 
Next, let us show that 
\begin{equation}
u^{-\beta} \chi_{\{u>0\}}\in L^{1}(I\times(0,\infty)). \label{plap50}%
\end{equation}
 From $(\ref{plap43})$, 
applying Fatou's Lemma deduces that there is a function $\Phi\in L^{1}%
(I\times(0,\infty))$ such that
\begin{equation}
\liminf_{\varepsilon\rightarrow0}g_{\varepsilon}(u_{\varepsilon})=\Phi
,\quad\text{in }L^{1}(I\times(0,\infty)). \label{plap48}%
\end{equation}
The monotonicity of $\{u_{\varepsilon}\}_{\varepsilon>0}$ ensures  \hspace{0.05in} $g_{\varepsilon}(u_{\varepsilon})(x,t)\geq g_{\varepsilon}(u_{\varepsilon}
)\chi_{\{u>0\}}(x,t)$, so
\begin{equation}
\liminf_{\varepsilon\rightarrow0}g_{\varepsilon}(u_{\varepsilon})(x,t)=\Phi\geq
u^{-\beta}\chi_{\{u>0\}}(x,t),\quad\text{ for a.e }(x,t)\in I\times(0,\infty).
\label{plap49}%
\end{equation}
Thus,  conclusion $(\ref{plap50})$ just follows from $(\ref{plap48})$ and $(\ref{plap49})$.
Actually, we will show at the end of this step  that 
\begin{equation}
\liminf_{\varepsilon\rightarrow 0} g_{\varepsilon}(u_{\varepsilon})
= \Phi=u^{-\beta}\chi_{\{u>0\}},\quad\text{in }L^{1}(I\times
(0,\infty)). \label{plap47}%
\end{equation}
Let us emphasize that $(\ref{plap47})$ implies the conclusion
\begin{equation}\label{plap47a}
u\in\mathcal{C}([0,\infty);L^{1}(I)),
\end{equation}
by following the proof of $(\ref{plap41})$. 
\\

At the moment, we demonstrate that $u$ must satisfy equation $(\ref{plap1})$ in the sense of distribution.
\\
For any $\eta>0$ fixed, we use the test function $\psi_{\eta
}(u_{\varepsilon})\phi$, $\phi\in\mathcal{C}_{c}^{\infty}(I\times(0,\infty))$, in
the equation satisfied by $u_{\varepsilon}$. Then, using integration by parts yields
\begin{align*}
& \int_{Supp(\phi)}-\Psi_{\eta}(u_{\varepsilon})\phi_{t}+\frac{1}{\eta}%
|\partial_x u_{\varepsilon}|^{p}\psi{^{\prime}}(\frac{u_{\varepsilon}}{\eta
})\phi+   |\partial_{x}u_{\varepsilon}|^{p-2}\partial_{x}u_{\varepsilon}%
. \phi_x  . \psi_{\eta}(u_{\varepsilon})+\\
& g_{\varepsilon}(u_{\varepsilon
})\psi_{\eta}(u_{\varepsilon})\phi + f(u_\varepsilon)\psi_\eta(u_\varepsilon). \psi_\varepsilon(u_\varepsilon) \phi  \hspace{0.05in}dxds=0,
\end{align*}
with  \hspace{0.05in}
$\displaystyle\Psi_{\eta}(u)=\int_{0}^{u}\psi_{\eta}(s)ds.$
There is no problem of going to the limit as $\varepsilon\rightarrow0$ in the indicated equation, so we have
\begin{equation*}
\int_{Supp(\phi)}-\Psi_{\eta}(u)\phi_{t}+\frac{1}{\eta}|u_x|^{p}%
\psi^{\prime}(\frac{u}{\eta})\phi+| u_x|^{p-2}
u_x .\phi_x .\psi_{\eta}(u)+u^{-\beta}\psi_{\eta}(u)\phi + f(u)\psi_\eta(u) \phi \hspace
{0.05in}dxds=0. 
\end{equation*}

Next, we go to the limit as  $\eta\rightarrow0$ in the above equation. From $(\ref{plap27})$, $(\ref{plap29})$, $(\ref{plap46a})$, and $(\ref{plap50})$, it is not difficult to verify
\begin{equation}\label{plap52}
\left\{
\begin{array}
[c]{l}%
\displaystyle\lim_{\eta\rightarrow0}\int_{Supp(\phi)}\Psi_{\eta}(u)\phi_{t} \hspace
{0.05in} dxds=\int
_{Supp(\phi)}u.\phi_{t}\hspace
{0.05in}dxds,
\\
\\
\displaystyle\lim_{\eta\rightarrow0}\int_{Supp(\phi)}|u_x|^{p-2} u_x. \phi_x .\psi_{\eta}(u)\hspace{0.05in}dxds=\int_{Supp(\phi)}|u_x|^{p-2} u_x.\phi_x \hspace{0.05in} dxds,
\\
\\
\displaystyle\lim_{\eta\rightarrow0}\int_{Supp(\phi)}u^{-\beta} \psi_{\eta
}(u)\phi\hspace
{0.05in} dxds=\int_{Supp(\phi)}u^{-\beta}\chi_{\{u>0\}}\phi \hspace
{0.05in}dxds,
\\
\\
\displaystyle\lim_{\eta\rightarrow0}\int_{Supp(\phi)} f(u)\psi_\eta(u) \phi
\hspace{0.05in} dxds=\int_{Supp(\phi)} f(u)  \phi \hspace
{0.05in}dxds. 
\end{array}
\right.  
\end{equation}
Note that the assumption  $f(0)=0$ is used in the final equality of $(\ref{plap52})$.
While, we have
\begin{equation}
\lim_{\eta\rightarrow0}\int_{Supp(\phi)}\frac{1}{\eta}|\partial_{x}u|^{p}%
\psi{^{\prime}}(\frac{u}{\eta})\phi dxds=0. \label{plap52a}
\end{equation}
In fact,  since $u$ satisfies estimate $(\ref{plap29})$, we have
\begin{align*}
\frac{1}{\eta}\int_{Supp(\phi)}|\partial_{x}u|^{p}|\psi{^{\prime}}(\frac
{u}{\eta}).\phi|dxds  &  \leq C\frac{1}{\eta}\int_{Supp(\phi)\cap
\{\eta<u<2\eta\}}u^{1-\beta}dxds\\
&  \leq 2C\int_{Supp(\phi)\cap\{\eta<u<2\eta\}}u^{-\beta}dxds,
\end{align*}
where the constant $C>0$ merely depends on $\beta$, $p$, $Supp(\phi)$, $\Vert
u_{0}\Vert_{L^{1}(I)}$.  Moreover,  $u^{-\beta}\chi_{\{u>0\}}$ is integrable on $I\times(0,\infty)$ by $(\ref{plap50})$. Then, we obtain
\[
\lim_{\eta\rightarrow0}\int_{Supp(\phi)\cap\{\eta<u<2\eta\}}u^{-\beta}dxds=0,
\]
which implies the conclusion $(\ref{plap52a})$. A combination of 
$(\ref{plap52})$ and $(\ref{plap52a})$ deduces
\begin{equation}
\int_{Supp(\phi)}\left(  -u\phi_{t}+|u_x|^{p-2}
u_x \phi_x + u^{-\beta}\chi_{\{u>0\}}\phi  + f(u)\phi \right)  dxds=0. \label{plap54}%
\end{equation}
In  other words, $u$ satisfies equation $(\ref{plap1})$ in $\mathcal{D^{\prime}
}(I\times(0,\infty))$. 
\\

As mentioned above, we prove $(\ref{plap47})$ now. The fact that $u_{\varepsilon}$ is a weak
solution of $(\ref{plap17})$ gives us
\[
\int_{Supp(\phi)}\left(  -u_{\varepsilon}\phi_{t}+|\partial_{x}u_{\varepsilon
}|^{p-2}\partial_{x}u_{\varepsilon}\partial_{x}\phi+g_{\varepsilon
}(u_{\varepsilon})\phi  +  f(u_\varepsilon) \psi_\varepsilon(u_\varepsilon) \phi \right)  dxds=0,
\]
for $\phi\in\mathcal{C}_{c}^{\infty
}(I\times(0,\infty))$, $\phi\geq0$.
Letting $\varepsilon\rightarrow0$ induces
\begin{equation}
\int_{Supp(\phi)}\left(  -u\phi_{t}+|u_x|^{p-2}
u_x \phi_x \right)  dxds + \lim_{\varepsilon\rightarrow0}\int_{Supp(\phi
)}g_{\varepsilon}(u_{\varepsilon})\phi \hspace{0.05in} dxds+ \int_{Supp(\phi)} f(u)\phi  \hspace{0.05in}dxds=0. \label{plap55}%
\end{equation}
By $(\ref{plap54})$ and $(\ref{plap55})$, we get
\begin{equation}
\lim_{\varepsilon\rightarrow0}\int_{0}^{\infty}\int_{I}g_{\varepsilon
}(u_{\varepsilon})\phi \hspace{0.05in}dxds=\int_{0}^{\infty}\int_{I}u^{-\beta}\chi
_{\{u>0\}}\phi \hspace{0.05in}dxds. \label{plap56}%
\end{equation}
According to $(\ref{plap48})$, $(\ref{plap56})$ and Fatou's Lemma, we obtain
\[
\int_{0}^{\infty}\int_{I}u^{-\beta}\chi_{\{u>0\}}\phi dxds\geq\int_{0}%
^{\infty}\int_{I}\Phi\phi dxds,\quad\forall\phi\in\mathcal{C}_{c}^{\infty
}(I\times(0,\infty)),\phi\geq0.
\]
The last inequality and $(\ref{plap49})$ yield
\[
u^{-\beta}\chi_{\{u>0\}}=\Phi,\quad\text{a.e in }I\times(0,\infty).
\]
Thereby, we get $(\ref{plap47})$. In conclusion, $u$ is a weak solution of equation
$(\ref{plap1})$.

\begin{remark}
The reader should note that $(\ref{plap54})$ is not sufficient to conclude
that $u$ is a weak solution of equation $(\ref{plap1})$ according to Definition $1$. Thus, it is
necessary to prove  $(\ref{plap47})$, thereby proves $(\ref{plap47a})$.
\end{remark}
\bigskip

We end this Section by proving that $u$ is the maximal solution of equation
$(\ref{plap1})$.
\begin{proposition}
\label{promaxsolution} Let $v$ be any weak solution of equation $(\ref{plap1}%
)$. Then, we have
\[
v (x,t)\leq u(x,t), \quad\text{for a.e }(x,t)\in I\times(0,\infty).
\]
\end{proposition}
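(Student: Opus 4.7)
The natural approach is to compare $v$ against each approximant $u_\varepsilon$ and then pass to the limit, exploiting the fact that $u_\varepsilon \downarrow u$ as $\varepsilon \to 0^+$ by construction.

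The plan is to fix $\varepsilon > 0$ and show that $v$ is a weak sub-solution of the regularized problem $(P_\varepsilon)$ in \eqref{plap17}. The key observation is that $0 \le \psi_\varepsilon \le 1$, $f \ge 0$, and $v^{-\beta}\chi_{\{v>0\}} \ge g_\varepsilon(v) = v^{-\beta}\psi_\varepsilon(v)$ pointwise (with the convention at $v=0$). Consequently, from the identity satisfied by $v$ as a weak solution of \eqref{plap1},
\begin{equation*}
\partial_t v - (|v_x|^{p-2}v_x)_x + g_\varepsilon(v) + f(v)\psi_\varepsilon(v)
\;\le\; \partial_t v - (|v_x|^{p-2}v_x)_x + v^{-\beta}\chi_{\{v>0\}} + f(v) \;=\; 0,
\end{equation*}
in the distributional sense. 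Moreover, $v(\cdot,0) = u_0$ coincides with the initial datum used for $u_\varepsilon$.

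Next, I would invoke Lemma \ref{uniqueness} (the comparison principle for $(P_\varepsilon)$) with $v_1 = v$ as sub-solution and $v_2 = u_\varepsilon$ as solution (hence super-solution) to conclude
\begin{equation*}
v(x,t) \le u_\varepsilon(x,t), \qquad \text{for a.e. } (x,t) \in I \times (0,\infty).
\end{equation*}
Finally, since $\{u_\varepsilon\}_{\varepsilon>0}$ is a non-increasing family with $u_\varepsilon(x,t) \downarrow u(x,t)$ pointwise as $\varepsilon \to 0^+$ (as established just before Proposition \ref{promaxsolution}), letting $\varepsilon \to 0^+$ in the inequality above yields $v(x,t) \le u(x,t)$ a.e. in $I \times (0,\infty)$, which is the desired maximality.

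The main potential obstacle is checking that Lemma \ref{uniqueness} applies cleanly when the sub-solution $v$ only has the regularity afforded by the definition of weak solution to \eqref{plap1} (namely $v \in L^p_{\mathrm{loc}}(0,\infty;W_0^{1,p}(I)) \cap L^\infty_{\mathrm{loc}}(\bar I \times (0,\infty)) \cap \mathcal{C}([0,\infty);L^1(I))$, with $f(v), v^{-\beta}\chi_{\{v>0\}} \in L^1$), rather than the better regularity enjoyed by $u_\varepsilon$. In the proof of Lemma \ref{uniqueness} (deferred to the Appendix), one typically uses a truncation such as $T_1(v - u_\varepsilon)_+$ as a test function; the sub-solution inequality derived above remains valid against such non-negative test functions, the monotonicity of $s \mapsto g_\varepsilon(s) + f(s)\psi_\varepsilon(s) - $ (which is in fact the feature used in the uniqueness proof) controls the lower-order terms, and the $p$-Laplacian contributes a non-negative quantity by monotonicity. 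Hence the same chain of inequalities establishing uniqueness yields the one-sided comparison $v \le u_\varepsilon$, completing the argument.
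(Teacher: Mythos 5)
Your argument is exactly the one in the paper: observe that $g_\varepsilon(v)\le v^{-\beta}\chi_{\{v>0\}}$ and $f(v)\psi_\varepsilon(v)\le f(v)$ so that $v$ is a sub-solution of $(P_\varepsilon)$, apply Lemma \ref{uniqueness} to obtain $v\le u_\varepsilon$, and pass to the limit $\varepsilon\to 0^+$ using the monotone convergence $u_\varepsilon\downarrow u$. Your closing remark about applying the comparison lemma to a sub-solution with only the regularity of a weak solution of \eqref{plap1} is a reasonable point of care, and it is consistent with the remark after Lemma \ref{uniqueness} in the Appendix that the comparison extends to sub/super-solution pairs.
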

\begin{proof}
 For any $\varepsilon>0$, we observe that
\[
g_{\varepsilon}(v)\leq v^{-\beta}\chi_{\{v>0\}}, \text{and } f(v).\psi_\varepsilon(v)\leq f(v).
\]
Thus, we get
\[
\partial_{t}v-\left(  |v_x|^{p-2} v_x\right)
_{x}+g_{\varepsilon}(v) + f(v).\psi_\varepsilon(v)
\leq \partial_{t}v-\left(  |v_x|^{p-2}
v_x\right)  _{x}+v^{-\beta}\chi_{\{v>0\}}+  f(v)   =0,
\]
which implies that $v$ is a sub-solution of equation $(P_\varepsilon)$. Thanks to Lemma \ref{uniqueness}, we get
\[
v(x,t)\leq u_{\varepsilon}(x,t),\quad\text{for a.e }(x,t)\in I\times
(0,\infty).
\]
Letting $\varepsilon\rightarrow0$ yields the result. This puts an end to the
proof of Theorem $\ref{theplapexistence}$.
\end{proof}
\bigskip

If $f$ is a global Lipschitz function, then we have 
\begin{theorem}\label{thegloLip} 
Let $0\leq u_0\in L^1(I)$, $u_0\not=0$.
Assume that $f$ is a global Lipschitz  function with Lipschitz constant $C_f$, and  $f(0)=0$.   Then there exists a maximal weak solution $u$ of equation $(\ref{plap1})$.  Furthermore, we have 
\\

 For any $\tau>0$, there exist two positive constants $C_1(\beta,p,|I|)$ and $C_2(p, |I|)$ 
such that  
\begin{equation}
|u_x(x,t)|\leq C_1 . u^{1-\frac{1}{\gamma}}(x,t)\left(  \tau^{-\frac{\lambda+\beta+1}{\lambda p}%
}\Vert u_{0}\Vert_{L^{1}(I)}^{\frac{1+\beta}{\lambda}} +  \tau^{-\frac{\beta}{\lambda p}}\Vert u_{0}\Vert_{L^{1}(I)}^{\frac{\beta}{\lambda}} . M_f(u(\tau))  + C^{\frac{1}{p}}_f . \tau^{-\frac{1+\beta}{\lambda p}}\|u_0\|^{\frac{1+\beta}{\lambda}}_{L^1(I)} +1     \right), \label{plap56a}
\end{equation}
for a.e $(x,t)\in I\times(\tau,\infty)$. Note that  \hspace{0.05in}
$M_f(u(\tau))  \leq  \left(\displaystyle\max_{0\leq s\leq C_2 . \tau^{-\frac{1}{\lambda}}\|u_0\|^{\frac{p}{\lambda}}_{L^1(I)}}|f(s)|  \right)^\frac{1}{p}$.
\end{theorem}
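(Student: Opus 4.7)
The proof mirrors the three-step construction of Theorem \ref{theplapexistence}, with Remark \ref{rem20} replacing Lemma \ref{lemgradientestimate1} as the source of the gradient bound. Regularize $u_0$ by $u_{0,n}\in\mathcal{C}_c^\infty(I)$ with $u_{0,n}\to u_0$ in $L^1(I)$ and $\|u_{0,n}\|_{L^1(I)}\le\|u_0\|_{L^1(I)}$, and let $u_{\varepsilon,n}$ be the classical solution of $(P_\varepsilon)$ with datum $u_{0,n}$ provided by Theorem \ref{theepsilon}/Remark \ref{rem20}, obeying (\ref{plapgradient2}) with the $M_{f^\prime}$ contribution replaced by $C_f^{1/p}$. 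Since $g_\varepsilon,f\psi_\varepsilon\ge 0$, comparison with the pure $p$-Laplacian problem (\ref{plap22}) yields the $L^1$--$L^\infty$ smoothing bound $\|u_{\varepsilon,n}(\cdot,t)\|_\infty\le C(p,|I|)\,t^{-1/\lambda}\|u_0\|_{L^1(I)}^{p/\lambda}$. Restarting (\ref{plapgradient2}) at time $\tau/2$ with $u_{\varepsilon,n}(\tau/2)$ as initial datum and substituting the smoothing bound delivers (\ref{plap56a}) for $u_{\varepsilon,n}$ on $I\times(\tau,\infty)$, uniformly in $\varepsilon,n$. Proposition \ref{propLipschitz} requires only a minor adjustment: the estimate on $H_\varepsilon(z)=\int_0^z f(s)\psi_\varepsilon(s)\,ds$ based on monotonicity is replaced by $|H_\varepsilon(z)|\le C_f z^2/2$, so a uniform modulus of Lipschitz continuity on $\overline{I}\times[t_1,t_2]$ still follows.

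The passage $n\to\infty$ proceeds as in Theorem \ref{theepsilon1}: Ascoli--Arzel\`a with a diagonal argument gives a pointwise limit $u_\varepsilon$, the Boccardo--Murat argument gives a.e.\ convergence of $\partial_x u_{\varepsilon,n}$, and the $L^1$-contraction (\ref{plap41})--(\ref{plap46}) survives because $f(u_{\varepsilon,n})-f(u_{\varepsilon,m})$ is controlled by $C_f|u_{\varepsilon,n}-u_{\varepsilon,m}|$ and absorbed by Gronwall. For the limit $\varepsilon\to 0$, the monotone pointwise limit $u_\varepsilon\downarrow u$ used in Theorem \ref{theplapexistence} is unavailable, so I extract a subsequential limit $u$ by Ascoli--Arzel\`a from the uniform Lipschitz regularity. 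The identification $g_\varepsilon(u_\varepsilon)\to u^{-\beta}\chi_{\{u>0\}}$ in $L^1$ proceeds verbatim as in (\ref{plap47})--(\ref{plap47a}) via Fatou combined with the distributional equation, and $f(u_\varepsilon)\psi_\varepsilon(u_\varepsilon)\to f(u)$ in $L^1_{\mathrm{loc}}$ by dominated convergence using the uniform $L^\infty$ bound, $f(0)=0$, and the Lipschitz hypothesis.

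Finally, for maximality, any weak solution $v$ of (\ref{plap1}) satisfies $g_\varepsilon(v)\le v^{-\beta}\chi_{\{v>0\}}$ and $f(v)\psi_\varepsilon(v)\le f(v)$, so $v$ is a subsolution of $(P_\varepsilon)$; a Gronwall-enhanced variant of Lemma \ref{uniqueness} that absorbs the non-monotone Lipschitz zero-order contribution yields $v\le u_\varepsilon$, and letting $\varepsilon\to 0$ gives $v\le u$. The principal obstacle is precisely this loss of monotonicity of $f$: both the $\varepsilon$-limit and the comparison step, which were handled by clean monotone arguments in the proof of Theorem \ref{theplapexistence}, must now be reworked through compactness of the approximating family and a Gronwall absorption of the Lipschitz term in the $L^1$-contraction identities.
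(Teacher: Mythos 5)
Your proposal is essentially correct and follows the route the paper itself only sketches (``same as Theorem \ref{theplapexistence}, with (\ref{plap27}) and (\ref{plapgradient2})''). You correctly identify the two places where $f$'s monotonicity was silently used and must be repaired: the drop of the $f\psi_\varepsilon$ difference in the $L^1$-contraction (\ref{plap46}) and in Lemma \ref{uniqueness}, both now requiring a Gronwall absorption of the Lipschitz term; and the bound on $H_\varepsilon(z)=\int_0^z f\psi_\varepsilon$, replaced by $|H_\varepsilon(z)|\le C_f z^2/2$ using $|f(s)|\le C_f s$. These are exactly the adjustments the paper leaves to the reader, and they work.

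One small overstatement: you claim the monotone $\varepsilon$-limit $u_\varepsilon\downarrow u$ is ``unavailable'' and replace it by an Ascoli--Arzel\`a extraction. In fact it survives. The monotonicity of $\{u_\varepsilon\}_{\varepsilon}$ in the paper rests on (a) $g_{\varepsilon'}\ge g_\varepsilon$ and $\psi_{\varepsilon'}\ge\psi_\varepsilon$ for $\varepsilon'<\varepsilon$, together with (b) the comparison Lemma \ref{uniqueness}. Point (a) is unaffected here: the standing hypothesis $f\ge 0$ gives $f\psi_{\varepsilon'}\ge f\psi_\varepsilon$ on $[0,\infty)$, so $u_\varepsilon$ remains a super-solution of the $u_{\varepsilon'}$-equation. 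Point (b) is exactly what your Gronwall-enhanced comparison lemma provides (treating $f\psi_\varepsilon$ as a Lipschitz perturbation on the bounded range of the solutions, in the same way $g_\varepsilon$ is already treated). So once the comparison lemma is repaired, $u_\varepsilon\downarrow u$ goes through verbatim, and the identification argument for (\ref{plap47})--(\ref{plap47a}) can be kept unchanged. Your compactness alternative is also valid, but it is an unnecessary detour; the only genuine modification to the $\varepsilon$-limit is the one to Lemma \ref{uniqueness} itself, which you already supply for the maximality step.
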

\begin{proof}
The proof of this Theorem is most likely to the one of Theorem \ref{theplapexistence}. Then, we leave it for the  reader, who is interested in detail. Note that estimate $(\ref{plap56a})$ is just a combination of the a priori bound $(\ref{plap27})$ and $(\ref{plapgradient2})$.
\end{proof}

\begin{remark}
We emphasize that our existence results also holds for a class of continuous functions $f(u,x,t): \mathbb{R}\times I\times (0,\infty)\rightarrow  [0,\infty)$, such that
$f(0, x,t)=0$, $\forall (x,t)\in I\times (0,\infty)$; and for any $(x,t)\in I\times (0,\infty)$, $f(., x,t)$ satisfies either
$(H_1)$, or   $(H_2)$,  or a global Lipschitz property.
\end{remark}

\section{Quenching phenomenon of nonnegative solutions}

\hspace{0.2in} In this section, we will show that any weak solution of equation
$(\ref{plap1})$ must quench  (Theorem \ref{theplapqueching1} and Theorem \ref{theplapqueching}).
According to Proposition \ref{promaxsolution}, it is enough to prove that the
maximal solution $u$  vanishes identically after a finite time. Then, we have the following result

\begin{theorem}
\label{theextinction} Let  $u_{0}\in L^{1}(I)$, $u_{0}\geq0$, and $f$ satisfy $(H_2)$.  Then,
there exists a finite time $T_{0}$ such that
\[
u(x,t)=0, \quad\forall x\in\overline{I},  \forall t>T_{0}.
\]
Furthermore, $T_{0}$ can be estimated by  a constant depending on $\beta, p, |I|, \|u_{0}\|_{L^{1}(I)}$.
\end{theorem}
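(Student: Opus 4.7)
By Proposition \ref{promaxsolution} it suffices to show that the maximal solution $u$ itself vanishes after finite time. The idea is to exploit the ODE behaviour encoded in the singular absorption: the scalar ODE $\dot w = -w^{-\beta}$ extinguishes in finite time, and a spatially constant solution $w(t)$ of this ODE, viewed as a function on $I$, is exactly what is needed as a super-solution (the $p$-Laplacian vanishes on constants and $f\geq 0$ only helps). Fix any $\tau>0$; by the $L^1$--$L^\infty$ smoothing estimate \eqref{plap27} of Theorem \ref{theepsilon1}, the approximations satisfy
\[
\|u_\varepsilon(\tau)\|_{L^\infty(I)} \leq M_0 := C(p,|I|)\,\tau^{-1/\lambda}\,\|u_0\|_{L^1(I)}^{p/\lambda},
\]
with $M_0$ independent of $\varepsilon$.

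\textbf{Super-solution construction at level $\varepsilon$.} Because the discontinuous map $u \mapsto u^{-\beta}\chi_{\{u>0\}}$ destroys any direct comparison principle for the limit equation (compare the non-uniqueness theorem of Winkler in \cite{Winkler-non uniquenes}), I work throughout at the regularized level, where Lemma \ref{uniqueness} supplies comparison. On the interval $[\tau, T_\varepsilon]$ set
\[
\bar W_\varepsilon(t) = \bigl[(M_0+2\varepsilon)^{1+\beta} - (1+\beta)(t-\tau)\bigr]^{1/(1+\beta)},
\]
where $T_\varepsilon$ is chosen so that $\bar W_\varepsilon(T_\varepsilon) = 2\varepsilon$, that is, $T_\varepsilon = \tau + \frac{(M_0+2\varepsilon)^{1+\beta}-(2\varepsilon)^{1+\beta}}{1+\beta}$. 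On $[\tau,T_\varepsilon]$ we have $\bar W_\varepsilon \geq 2\varepsilon$, so $\psi_\varepsilon(\bar W_\varepsilon) = 1$ and $g_\varepsilon(\bar W_\varepsilon) = \bar W_\varepsilon^{-\beta}$, whence
\[
\partial_t \bar W_\varepsilon - \bigl(|\partial_x \bar W_\varepsilon|^{p-2}\partial_x \bar W_\varepsilon\bigr)_x + g_\varepsilon(\bar W_\varepsilon) + f(\bar W_\varepsilon)\psi_\varepsilon(\bar W_\varepsilon) = -\bar W_\varepsilon^{-\beta} + \bar W_\varepsilon^{-\beta} + f(\bar W_\varepsilon) = f(\bar W_\varepsilon) \geq 0,
\]
so $\bar W_\varepsilon$ is a classical super-solution of $(P_\varepsilon)$. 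The boundary/initial inequalities $\bar W_\varepsilon > 0 = u_\varepsilon$ on $\partial I$ and $\bar W_\varepsilon(\tau) > M_0 \geq u_\varepsilon(\cdot,\tau)$ in $I$, combined with Lemma \ref{uniqueness} (applied in its natural form allowing non-matching data on the parabolic boundary, through the test function $(u_\varepsilon - \bar W_\varepsilon)_+$, which vanishes on $\partial I$), give $u_\varepsilon \leq \bar W_\varepsilon$ on $I \times [\tau, T_\varepsilon]$. For $t \geq T_\varepsilon$, the constant $2\varepsilon$ is itself a super-solution of $(P_\varepsilon)$ since $g_\varepsilon(2\varepsilon) + f(2\varepsilon) = (2\varepsilon)^{-\beta} + f(2\varepsilon) \geq 0$, so the same comparison gives $u_\varepsilon \leq 2\varepsilon$ on $I \times [T_\varepsilon,\infty)$.

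\textbf{Passage to the limit.} Let $T_0 := \tau + M_0^{1+\beta}/(1+\beta) = \lim_{\varepsilon \to 0} T_\varepsilon$. For any $t > T_0$ and any $\varepsilon$ small enough to ensure $T_\varepsilon < t$, we obtain $0 \leq u(x,t) = \lim_\varepsilon u_\varepsilon(x,t) \leq 2\varepsilon$; sending $\varepsilon \to 0$ yields $u(\cdot,t) \equiv 0$ for every $t > T_0$. The extinction time $T_0$ depends only on $\beta, p, |I|, \|u_0\|_{L^1(I)}$ once $\tau$ is fixed (e.g.\ $\tau = 1$, or any $\tau$ optimizing the expression $\tau + C(p,|I|)^{1+\beta}\tau^{-(1+\beta)/\lambda}\|u_0\|_{L^1(I)}^{p(1+\beta)/\lambda}/(1+\beta)$). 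The principal obstacle is the two-piece structure forced by the regularization: the naive ODE super-solution ceases to be a super-solution of $(P_\varepsilon)$ once it falls below the cutoff threshold $2\varepsilon$ (where $\psi_\varepsilon$ switches off the singular absorption), so it must be spliced together with the constant super-solution $2\varepsilon$, and the sharp extinction time for $u$ is recovered only upon letting $\varepsilon \to 0$.
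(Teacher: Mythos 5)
Your proof is correct and follows essentially the same strategy as the paper: use the $L^1$--$L^\infty$ smoothing estimate \eqref{plap27} to bound $\|u_\varepsilon(\tau)\|_\infty$ by $M_0 = L(\tau,u_0)$, then compare $u_\varepsilon$ against a spatially constant super-solution at the regularized level via Lemma \ref{uniqueness}, and pass to the limit $\varepsilon\to 0$ before optimizing over $\tau$. The only difference is cosmetic: the paper takes $\Gamma_\varepsilon$ to be the exact solution of the regularized ODE $\dot\Gamma_\varepsilon+g_\varepsilon(\Gamma_\varepsilon)=0$ and then asserts $\Gamma_\varepsilon\to\Gamma$, whereas you replace this by an explicit two-piece splice of the unregularized extinction profile with the constant $2\varepsilon$, which avoids the (unproved) ODE convergence step and makes the passage to $T_0$ completely elementary.
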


\begin{proof}
For any $\tau>0$,  we put
\[
L\left(\tau, u_0\right)= C(p, |I|) . \tau^{-\frac{1}{\lambda}}\|u_0\|^{\frac{p}{\lambda}}_{L^1(I)},
\]
 the a priori bound of $u(x,t)$ on  $[\tau, \infty)$, see $(\ref{plap27})$.
\\

Let $\Gamma_\varepsilon(t)$ be a  solution of equation
 \begin{equation}\label{plap57a}
 \left\{
 \begin{array}
 [c]{l}%
 \partial_{t} \Gamma_{\varepsilon}(t)+g_{\varepsilon}(\Gamma_{\varepsilon})=0\quad t>0,\\
 \\
 \Gamma_{\varepsilon}(0)=L\left(\tau, u_0\right).
 \end{array}
 \right.
 \end{equation}
Then, $\Gamma_\varepsilon$ is a super-solution of equation $(P_\varepsilon)$  satisfied by $u_\varepsilon$. Therefore, a comparison deduces
\[
u_\varepsilon (x,s+\tau)\leq \Gamma_\varepsilon(s), \quad   \forall (x,s)\in I\times(0,\infty). 
\]
It is straightforward to show that
 \[
 \Gamma_{\varepsilon}(t)  \overset{\varepsilon\rightarrow  0}{\longrightarrow} \Gamma(t)=\left(  
 L\left(\tau, u_0\right)^{1+\beta}-(1+\beta)t\right)^{\frac{1}{1+\beta}}_+, \quad \text{for } t>0.
 \]
Then, we obtain
 \[
 u(x,s+\tau)\leq \Gamma(s),\quad\text{for }  (x,s)\in I\times(0,\infty),
 \]
 which implies
 \begin{equation}\label{plap57}
 u(x,t)=0, \quad \text{for any } t\geq \tau+  \frac{1}{1+\beta}L^{1+\beta}\left(\tau, u_0\right), \text{ and for }  x\in I.
 \end{equation}
 
 Now, we try to estimate the value of the minimal extinction time $T_{0}$. It follows from $(\ref{plap57})$ that
 \[
 T_0\leq  \tau+  \frac{1}{1+\beta}L^{1+\beta}\left(\tau, u_0\right), \quad \forall \tau>0,
 \]
thereby
\begin{equation*}
 T_0\leq \min_{\tau>0}\{ \tau+  \frac{1}{1+\beta}L^{1+\beta}\left(\tau, u_0\right)\}= 
 C(\beta, p, |I|) \|u_0\|^{\frac{(1+\beta)p}{1+\beta+\lambda}}_{L^1(I)}. 
\end{equation*}
This completes the proof of Theorem $\ref{theextinction}$, thereby proves Theorem \ref{theplapqueching}.
\end{proof}
\begin{remark}
The result of Theorem \ref{theextinction}
still holds if $f$ is assumed as in Theorem \ref{thegloLip}. 
\end{remark}
\begin{remark}\label{rem15} Theorem \ref{theplapqueching1} is proved similarly. Furthermore, 
 $T_0$ can be estimated by the constant $\displaystyle\frac{\|u_0\|^{1+\beta}_\infty}{1+\beta}$, see also Theorem $\ref{theextinctionCauchy}$.
\end{remark}
\bigskip

As a consequence of Theorem $\ref{theextinction}$,  the existence result  fails if $f(0)>0$.
\begin{corollary}\label{Cornonexist}
  Let $f(u, x, t): [0,\infty)\times I\times (0,\infty) \rightarrow [0,\infty)$ be a real nonnegative function. Assume that there is a point $x_0\in I$ such that  $f(0,x_0,t)>0$, for any $t>0$ large enough.  Then, we have no nonnegative weak solution of  problem $(\ref{plap1})$.
  \end{corollary}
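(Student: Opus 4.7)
The plan is to argue by contradiction. Suppose that $u$ is a nonnegative weak solution of (\ref{plap1}) with the nonlinearity $f(u,x,t)$ prescribed in the corollary, defined on some initial datum $u_0\in L^1(I)$.

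\textit{Step 1: $u$ quenches in finite time.} Since $f\ge 0$, the term $+f(u,x,t)$ only reinforces the nonlinear absorption in (\ref{plap1}). Following the proof of Theorem \ref{theextinction}, approximate $u$ by solutions $u_\varepsilon$ of the regularization $(P_\varepsilon)$ (with $f$ replaced by $f\,\psi_\varepsilon\ge 0$), start from the smoothing bound $L(\tau,u_0)$ at time $\tau$, and compare $u_\varepsilon(x,s+\tau)$ with the spatially constant solution $\Gamma_\varepsilon(s)$ of the ODE (\ref{plap57a}). Because $f\psi_\varepsilon\ge 0$, $\Gamma_\varepsilon$ remains a super-solution of the equation satisfied by $u_\varepsilon$, so Lemma \ref{uniqueness} gives $u_\varepsilon(x,s+\tau)\le\Gamma_\varepsilon(s)$. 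Letting $\varepsilon\downarrow 0$ and then optimizing in $\tau>0$ as in (\ref{plap57}) yields a finite time $T_0=T_0(\beta,p,|I|,\|u_0\|_{L^1(I)})$ with $u(x,t)=0$ for all $(x,t)\in\overline I\times[T_0,\infty)$.

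\textit{Step 2: Contradicting the weak formulation after quenching.} Choose $T_1>0$ such that $f(0,x_0,t)>0$ for all $t\ge T_1$, set $t_*=\max\{T_0,T_1\}+1$, and use the continuity of $f$ to obtain $\delta>0$ and an open neighborhood $U\subset I\times(T_0,\infty)$ of $(x_0,t_*)$ on which $f(0,x,t)\ge\delta$. Pick a nonnegative $\phi\in\mathcal{C}_c^{\infty}(I\times(T_0,\infty))$, $\phi\not\equiv 0$, with $\mathrm{supp}(\phi)\subset U$, and substitute it into the weak identity (\ref{plapdef}). Since $u\equiv 0$ on $\mathrm{supp}(\phi)$ and the convention $\chi_{\{u>0\}}u^{-\beta}=0$ holds where $u=0$, every term except the one involving $f$ is zero, and one is left with
\[
0\;=\;\int_{0}^{\infty}\!\!\int_{I}f(0,x,t)\,\phi(x,t)\,dx\,dt\;\ge\;\delta\int_{U}\phi\,dx\,dt\;>\;0,
\]
which is the desired contradiction.

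The main obstacle is justifying the quenching claim of Step 1, since Theorem \ref{theextinction} is stated under assumption $(H_2)$ while the corollary allows a much more general $f(u,x,t)$. I expect this to go through essentially verbatim: the proof uses $f$ only through its sign, via the fact that a nonnegative $f\psi_\varepsilon$ preserves the super-solution property of $\Gamma_\varepsilon$ in $(P_\varepsilon)$, and then through the comparison Lemma \ref{uniqueness}, neither of which needs any monotonicity or smoothness structure beyond $f\ge 0$ and continuity. Once quenching is secured, Step 2 is a short application of the weak formulation together with the continuity of $f$, so no further difficulty is anticipated.
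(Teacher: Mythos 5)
Your argument is essentially the paper's: show that a hypothetical weak solution quenches in finite time by comparison with the ODE super-solution $\Gamma_\varepsilon$, then contradict the weak formulation on $\{t>T_0\}$ once $u\equiv 0$ there. One adjustment worth making in Step 1: you do not need (and the paper does not invoke) the existence of regularized solutions $u_\varepsilon$ of $(P_\varepsilon)$ for this general $f(u,x,t)$, which would require separate justification -- it suffices to note that a nonnegative weak solution $u$ of $(\ref{plap1})$ is \emph{itself} a sub-solution of $\partial_t z-(|z_x|^{p-2}z_x)_x+g_\varepsilon(z)=0$, because $g_\varepsilon(u)\le u^{-\beta}\chi_{\{u>0\}}$ and $f\ge 0$, and compare $u$ directly with $\Gamma_\varepsilon$ via Lemma~\ref{uniqueness}, sidestepping the approximating sequence entirely. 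Your Step 2 is the same as the paper's terse closing claim ($f(0,x_0,t)=0$ for $t>T_0$), just written out explicitly with the test function and continuity of $f$, which is the correct way to justify it.
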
 
\begin{proof}
If the conclusion were false, there would exist then a weak solution  of $(\ref{plap1})$, say $\bar{u}$. Thus, $\bar{u}$ is a sub-solution of equation $(\ref{plap17})$.
Use the same analysis as in the proof of Theorem \ref{theextinction} to obtain
 \[
 \bar{u}(x,s+\tau)\leq \Gamma(s),\quad\text{for }  (x,s)\in I\times(0,\infty),
 \]
 which implies that $\bar{u}(x,t)$ must vanish identically after a finite time $T_0$. In particular, we have from equation satisfied by $\bar{u}$ that $f(0, x_0, t)=0$,  for any $t>T_0$. This contradicts the above assumption, or we get Corollary \ref{Cornonexist}.
\end{proof}
\begin{remark} It is of course that
our conclusion in Corollary \ref{Cornonexist} also holds for the inequalities  $u\geq 0$ and
\begin{equation*}
\left\{
\begin{array}
[c]{lr}%
\partial_{t}u-(|u_{x}|^{p-2}u_{x})_{x}+u^{-\beta}\chi_{\{u>0\}}   + f(u,x,t)\leq 0 &
\text{in}\hspace{0.05in}I\times(0,\infty),\\
u(-l,t)=u(l,t)=0 & t\in(0,\infty),\\
u(x,0)=u_{0}(x) & \hspace{0.05in}\text{in}\hspace{0.05in}I.
\end{array}
\right.  
\end{equation*}
\end{remark}

\section{On the associated Cauchy problem}
\hspace{0.2in} In this section, we  prove  the existence results for the Cauchy problem  $(\ref{plapCauchy})$.
Furthermore, the behaviors of nonnegative solutions are considered such as the quenching phenomenon, the finite speed of propagation, and the ISS property.

\subsection{Existence of a weak solution}
\hspace{0.2in} As mentioned in the Introduction, we first have an existence result of problem $(\ref{plapCauchy})$. 

\begin{theorem}
\label{theexistenceCauchy} Let $0\leq u_{0}\in
L^{1}(\mathbb{R})\cap L^{\infty}(\mathbb{R})$. Assume that $f$ satisfies either $(H_1)$, or $(H_2)$, or a global Lipschitz property. Then, there exists a weak bounded solution
$u\in\mathcal{C}([0,\infty);L^{1}(\mathbb{R}))\cap L^p(0,T; W^{1,p}(\mathbb{R}))$, satisfying
equation $(\ref{plapCauchy})$ in $\mathcal{D^{\prime}}(\mathbb{R}%
\times(0,\infty))$. Furthermore,  $u$ satisfies estimate $(\ref{plapgradient})$ corresponding to $(H_1)$ (resp. estimate $(\ref{plapgradient1})$ corresponding to $(H_2)$, and estimate $(\ref{plapgradient2})$ corresponding to the assumption global Lipschitz).
\end{theorem}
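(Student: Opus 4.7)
The plan is to approximate the Cauchy problem $(\ref{plapCauchy})$ by the Dirichlet problems $(\ref{plap1})$ posed on expanding intervals $I_R = (-R,R)$. For each $R>0$, set $u_{0,R} = u_0 \chi_{I_R}$ and invoke Theorem \ref{theplapexistence1} (or Theorem \ref{theplapexistence}, or Theorem \ref{thegloLip} according to which hypothesis on $f$ is in force) to obtain a maximal weak bounded solution $u_R$ of problem $(\ref{plap1})$ on $I_R \times (0,\infty)$. Extend $u_R$ by zero outside $I_R$ so that every $u_R$ lives on $\mathbb{R}\times(0,\infty)$, and study the limit $R \to \infty$.

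The first key step is the monotonicity $u_R \leq u_{R'}$ on $I_R$ whenever $R < R'$. I would verify it at the regularized level: the solution $u_{\varepsilon,R'}$ of $(P_\varepsilon)$ on $I_{R'}$, restricted to $I_R$, is a supersolution of the corresponding Dirichlet problem on $I_R$ (identical equation, same initial datum $u_{0,R}$ but nonnegative boundary values at $\pm R$), so Lemma \ref{uniqueness} gives $u_{\varepsilon,R} \leq u_{\varepsilon,R'}$ on $I_R$. Sending $\varepsilon \to 0$ preserves the inequality and yields a monotone family, so $u(x,t) := \lim_{R\to\infty} u_R(x,t)$ is well defined. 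Next I would collect the $R$-independent bounds: the $L^\infty$-bound $\|u_R(t)\|_\infty \leq \|u_0\|_\infty$ by comparison with the constant supersolution (as in $(\ref{plap5})$); the $L^1$-contraction $\|u_R(t)\|_{L^1(\mathbb{R})} \leq \|u_0\|_{L^1(\mathbb{R})}$ already valid for the regularizations and preserved in the limit; and, crucially, the pointwise gradient estimate $(\ref{plap3d})$ (resp.\ $(\ref{plapgradient1})$ or $(\ref{plapgradient2})$), whose constants depend only on $\beta$, $p$ and $\|u_0\|_\infty$, \emph{not} on $|I_R|$. A local-in-$x$ variant of Proposition \ref{propLipschitz}, obtained by multiplying the equation by $\zeta(x)\partial_t u_{\varepsilon,R}$ for a cutoff $\zeta\in\mathcal{C}^\infty_c(\mathbb{R})$ and exploiting $(\ref{plap3d})$, provides parabolic Lipschitz regularity on every compact $K\times[\tau,T]\subset\mathbb{R}\times(0,\infty)$ uniformly in $R$. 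Thus $u$ inherits the $L^\infty$ bound, the gradient estimate, and the local Lipschitz regularity.

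Finally, I would pass to the limit in the weak formulation $(\ref{plapdef})$ against any $\phi\in\mathcal{C}^\infty_c(\mathbb{R}\times(0,\infty))$. Fix $R_0$ so that $\mathrm{supp}(\phi) \subset I_{R_0}\times(0,\infty)$; for $R\geq R_0$ the uniform gradient bound and the Boccardo--Murat almost-everywhere convergence of gradients (invoked as for $(\ref{plap34a})$) together with dominated convergence handle the $p$-Laplacian term and the $f(u)$ term, while the identification of the singular absorption proceeds exactly as in the derivation of $(\ref{plap47})$, giving $u^{-\beta}\chi_{\{u>0\}}\in L^1_{\mathrm{loc}}$ as the weak limit of $g_\varepsilon(u_\varepsilon)$. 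For the $L^p(0,T;W^{1,p}(\mathbb{R}))$ regularity I would test the regularized equation by $u_{\varepsilon,R}$ itself to obtain the standard energy bound, uniform in $R,\varepsilon$. Membership $u\in\mathcal{C}([0,\infty);L^1(\mathbb{R}))$ is obtained by reproducing the Cauchy argument $(\ref{plap46})$ with $T_1(u_R - u_{R'})$ as test function and exploiting $L^1$-convergence of the initial data. The principal technical obstacle is ensuring that the local Lipschitz constant does not deteriorate as $R\to\infty$: this is the reason why the cutoff must be applied in $x$ when running the energy identity $(\ref{plap18b})$, which produces an additional $\int |\partial_x u_R|^p |\zeta'|$ contribution that is controlled uniformly thanks to the $|I|$-free gradient estimate of Theorem \ref{theplapexistence1}.
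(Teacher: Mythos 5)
Your overall strategy — approximate $(\ref{plapCauchy})$ by the Dirichlet problems on expanding intervals $I_r$, use the monotonicity in $r$ coming from Lemma \ref{uniqueness} at the $\varepsilon$-level, invoke the $|I|$-independent gradient estimates $(\ref{plapgradient})$, $(\ref{plapgradient1})$, $(\ref{plapgradient2})$, and pass to the limit through Boccardo--Murat a.e.\ gradient convergence together with the $\psi_\eta$-truncated weak formulation — is exactly what the paper does, and these parts of your proposal are sound. You are also right to flag that one must use the $\|u_0\|_\infty$-based gradient bounds rather than the $L^1$-smoothing variant $(\ref{plap3b})$, since the latter constants depend on $|I_r|$.

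There is, however, a genuine gap in your final step. You propose to establish $u\in\mathcal{C}([0,\infty);L^{1}(\mathbb{R}))$ by "reproducing the Cauchy argument $(\ref{plap46})$ with $T_1(u_R-u_{R'})$ as test function." This does not go through, and the paper itself explicitly remarks that the proof of $(\ref{plap47a})$ is \emph{not} applicable here because the Cauchy solution is built differently. The Section 3 argument works only at the $\varepsilon$-regularized level: the crucial estimate $(\ref{plap36})$ exploits that $g_\varepsilon$ is globally Lipschitz and bounded by $\varepsilon^{-\beta}$, so $\|g_\varepsilon(u_{\varepsilon,n})-g_\varepsilon(u_{\varepsilon,m})\|_{L^1}\to 0$ by dominated convergence. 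For the Cauchy problem the approximation parameter is $r$, and each $u_r$ already solves the genuinely singular problem: the absorption is the full $u_r^{-\beta}\chi_{\{u_r>0\}}$, which is neither bounded nor Lipschitz nor monotone as a function of $u_r$ (there is a jump at $0$), so you cannot control $\int(u_{R'}^{-\beta}\chi_{\{u_{R'}>0\}}-u_R^{-\beta}\chi_{\{u_R>0\}})\,T_1(u_{R'}-u_R)$ in the way the $g_\varepsilon$-term is controlled. There is also a boundary contribution, since $u_{R'}|_{\partial I_R}$ is nonzero, so the $p$-Laplacian integration by parts on $I_R$ produces an uncontrolled term. The paper's route avoids all of this: it first obtains $u\in L^p(0,T;W^{1,p}(\mathbb{R}))$ and $u_t\in L^{p'}(0,T;W^{-1,p'})+L^1$ from the energy identity and the $L^1$ bounds $(\ref{plap71f})$, $(\ref{plap73})$, invokes Porretta's compactness (Lemma \ref{Lemcompactness}) to obtain $u\in\mathcal{C}([0,\infty);L^1_{\mathrm{loc}}(\mathbb{R}))$, and then upgrades to $L^1(\mathbb{R})$ by a tail estimate: the $L^1$-contraction $\|u(t)\|_{L^1}\leq\|u_0\|_{L^1}$ converts the problem of controlling $\int_{\mathbb{R}\setminus I_m}u(\cdot,t)$ into controlling $\int_{\mathbb{R}\setminus I_m}u_0$, which is small for $m$ large. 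You should replace your $T_1$-Cauchy argument by this compactness-plus-tail argument.
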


\begin{proof} We only give the proof of the case $(H_2)$. The case $(H_1)$ (resp. global Lipschitz) is proved similarly. 
\\

Let $u_r$ be the maximal solution of the following equation 
\begin{equation}
\left\{
\begin{array}
[c]{lr}%
\partial_{t}u-(|u_{x}|^{p-2}u_{x})_{x}+ u^{-\beta}\chi_{\{u>0\}} + f(u)=0 & \text{in}%
\hspace{0.05in} I_r\times(0,\infty),\\
u(-r,t)=u(r,t)=0, & \hspace{0.05in}\forall t\in(0,\infty),\\
u(x,0)=u_{0}(x), & \hspace{0.05in}\text{in}\hspace{0.05in} I_r,
\end{array}
\right.  \label{plap70}%
\end{equation}
see Theorem $\ref{theplapexistence}$.  It is clear that  $\{u_r\}_{r>0}$ is a nondecreasing sequence. Moreover, 
 the strong comparison principle deduces
\begin{equation}
 u_{r}(x,t) \leq\| u_{0}
\|_{L^{\infty}(\mathbb{R})},\quad\text{for } (x,t)\in  I_r\times(0,\infty).
\label{plap71}%
\end{equation}
Thus, there exists a function $u$ such that $u_r \uparrow u$ as $r\rightarrow\infty$. We will show that $u$ is  a solution of problem $(\ref{plapCauchy})$.
\\

First, $L^{1}$-contraction provides us
\begin{equation}
\left\{
\begin{array}
[c]{l}%
\Vert u_{r}(.,t)\Vert_{L^{1}(I_r)}\leq\Vert u_{0}\Vert
_{L^{1}(\mathbb{R})},\quad\text{for any } t\in(0,\infty), 
\\
\\
 \|f(u_r)\|_{L^{1}(I_r\times(0,\infty))}, \hspace{0.05in}\|u^{-\beta}_r\chi_{\{u_r>0\}}\|_{L^{1}(I_r\times(0,\infty))}\leq \Vert u_{0}\Vert
_{L^{1}(\mathbb{R})}.
\end{array}
\right.  \label{plap71g}
\end{equation}
It follows immediately from the Monotone Convergence Theorem  that $ u_{r}(t)$ converges to $u(t)$ in $L^{1}(\mathbb{R})$ and $f(u_r)$ converges to $f(u)$ in $L^{1}(\mathbb{R}\times(0,\infty))$ as $r\rightarrow\infty$, likewise
\begin{equation}\label{plap71f}
\left\{
\begin{array}
[c]{l}%
\Vert u(.,t)\Vert_{L^{1}(\mathbb{R})}\leq\Vert u_{0}\Vert
_{L^{1}(\mathbb{R})},\quad\text{for any }t\in(0,\infty), 
\\
\\
\|f(u)\|_{L^{1}(\mathbb{R}\times(0,\infty))}\leq\Vert u_{0}\Vert
_{L^{1}(\mathbb{R})}.
\end{array}
\right. 
\end{equation}

Next, for any $r>0$, we have from $(\ref{plapgradient})$ (see also Theorem $\ref{theepsilon}$) 
\begin{equation}\label{plap71h}
|\partial_{x} u_{r}(x, t)| \leq
 C . u_{r}^{1-\frac{1}{\gamma}} (x, t) \left(  t^{-\frac{1}{p}} 
  \|u_{0}\|_{L^{\infty}(\mathbb{R})}^{\frac{1+\beta}{p}} + \|u_{0}\|_{L^{\infty}(\mathbb{R})}^{\frac{\beta}{p}}. f^{\frac{1}{p}} \left( 2\|u_{0}\|_{L^{\infty}(\mathbb{R})}\right )   + 1 \right), 
\end{equation}
for a.e $(x,t) \in I_r\times(0,\infty)$. By using the same argument as in the proof of   $(\ref{plap34a})$,   there is a subsequence of $\{u_r\}_{r>0}$ such that
 \hspace{0.05in}
$\partial_x u_r  \overset{r\rightarrow \infty}{\longrightarrow}  u_x$,  \text{ for a.e } $(x,t)\in \mathbb{R}\times(0,\infty)$. From this result and 
 $(\ref{plap71h})$, we obtain
\begin{equation}\label{plap72a}
| u_x(x, t)| \leq
 C . u^{1-\frac{1}{\gamma}} (x, t) \left(  t^{-\frac{1}{p}} 
  \|u_{0}\|_{L^{\infty}(\mathbb{R})}^{\frac{1+\beta}{p}} + \|u_{0}\|_{L^{\infty}(\mathbb{R})}^{\frac{\beta}{p}}. f^{\frac{1}{p}} \left( 2\|u_{0}\|_{L^{\infty}(\mathbb{R})}\right )   + 1 \right),
\end{equation}
for a.e $(x,t) \in \mathbb{R}\times(0,\infty)$, and
\begin{equation}\label{plap72d}
\partial_x u_r  \overset{r\rightarrow \infty}{\longrightarrow}  u_x, \quad \text{in } L^q_{loc}(\mathbb{R}\times(0,\infty)), \quad \forall q\geq 1.
\end{equation} 

Now, we show that $u$ satisfies equation $(\ref{plapCauchy})$ in the sense of distribution. Indeed, using the test function $\psi_\eta(u_r).\phi$ for the equation satisfied by $u_r$ gives us
\begin{align*}
& \int_{Supp(\phi)} -\Psi_\eta(u_r) \phi_t  + |\partial_x u_r|^{p-2}\partial_x u_r .\phi_x \psi_\eta(u_r) + \frac{1}{\eta}|\partial_x u_r|^{p-2}\partial_x u_r  . \psi^{\prime}(\frac{u_r}{\eta})\phi
\\
& +u^{-\beta}_r\chi_{\{u_r>0\}}\psi_\eta(u_r)\phi+ f(u_r)\psi_\eta(u_r)\phi  \hspace{0.05in} dsdx
=0, \quad \forall \phi\in\mathcal{C}^\infty_{c}(\mathbb{R}\times(0,\infty)).
\end{align*}
We first take care of the term  $u^{-\beta}_r \chi_{\{u_r>0\}}\psi_\eta(u_r)\phi$ in passing $r\rightarrow \infty$ and $\eta\rightarrow 0$. It is not difficult to see that  
 $u^{-\beta}_r\chi_{\{u_r>0\}}\psi_\eta(u_r)=u^{-\beta}_r \psi_\eta(u_r)$ is bounded by $\eta^{-\beta}$. Then for any $\eta>0$, the Dominated Convergence Theorem yields    
 $u^{-\beta}_r \psi_\eta(u_r)  \displaystyle\overset{r\rightarrow \infty}{\longrightarrow } u^{-\beta} \psi_\eta(u)$ \hspace{0.05in} in  $L^1_{loc}(\mathbb{R}\times(0,\infty))$,  which  implies 
\[
\|u^{-\beta} \psi_\eta(u)\|_{L^1(\mathbb{R}\times(0,\infty))}  \overset{(\ref{plap71g})}{\leq} \|u_0\|_{L^1(\mathbb{R})}.
\]
Next, using the Monotone Convergence Theorem deduces $ u^{-\beta} \psi_\eta(u)\uparrow u^{-\beta}\chi_{\{u>0\}}$ as $ \eta\rightarrow 0$, thereby proves
\begin{equation}\label{plap73}
 \|u^{-\beta}\chi_{\{u>0\}}\|_{L^1(\mathbb{R}\times(0,\infty))}  \leq \|u_0\|_{L^1(\mathbb{R})}.
\end{equation}
Thanks to $(\ref{plap72d})$, $(\ref{plap71g})$ and $(\ref{plap71})$,  there is no problem of  passing to the limit as $r\rightarrow \infty$ in the indicated variational equation  in order to get 

\begin{align*}
& \int_{Supp(\phi)}- \Psi_\eta(u) \phi_t  + |u_x|^{p-2}  u_x .\phi_x \psi_\eta(u) + \frac{1}{\eta}| u_x|^{p-2} u_x  . \psi^{\prime}(\frac{u}{\eta})\phi
\\
& +u^{-\beta} \psi_\eta(u) \phi+ f(u)\psi_\eta(u)\phi  \hspace{0.05in} dsdx
=0, \quad \forall \phi\in\mathcal{C}^\infty_{c}(\mathbb{R}\times(0,\infty)).
\end{align*}
By $(\ref{plap71f})$,  $(\ref{plap72a})$, and $(\ref{plap73})$,  we can make the same argument as in $(\ref{plap52})$ and  $(\ref{plap52a})$ to obtain after   letting $\eta\rightarrow0$
\begin{equation}\label{plap73c}
 \int_{Supp(\phi)} -u \phi_t  + |u_x|^{p-2}  u_x .\phi_x  + 
u^{-\beta} \chi_{\{u>0\}} \phi+ f(u)\phi  \hspace{0.05in} dxds
=0, \quad \forall \phi\in\mathcal{C}^\infty_{c}(\mathbb{R}\times(0,\infty)).
\end{equation}
Or $u$ satisfies  equation $(\ref{plap1})$ in the sense of distribution. 
\\

Then, it remains to prove that $u\in \mathcal{C}([0,\infty); L^1(\mathbb{R}))$. 
Let us first claim that 
\begin{equation}\label{plap73b}
u\in \mathcal{C}([0,\infty); L^1_{loc}(\mathbb{R})).
\end{equation}
In order to prove $(\ref{plap73b})$, we borrow a compactness result of A. Porretta \cite{Porretta}. We present it here for a convenience
\begin{lemma}[Theorem $1.1$, \cite{Porretta}]\label{Lemcompactness}
Let $p>1$ and $p'$ its conjugate exponent $\left(\frac{1}{p}+ \frac{1}{p'}=1 \right)$,
$a, b\in\mathbb{R}$, and define the space
\begin{align*}
V^p_1(a,b)  =  \{u:  \Omega\times(a,b) \rightarrow \mathbb{R}; \quad u\in L^p(a,b; W^{1,p}_0(\Omega) ),\\
u_t \in L^{p'}(a,b; W^{-1, p'}(\Omega))+ L^1(\Omega\times(a,b))   \},
\end{align*} 
where  $\Omega$ is a bounded set in  $\mathbb{R}^N$. Then, we have 
\[
V^p_1(a,b)   \subset \mathcal{C}([a,b]; L^1(\Omega)).
\]
\end{lemma}
\begin{proof}
See its proof in Theorem $1.1$, \cite{Porretta}.
\end{proof}
\\

For any $r>0$, we extend $u_r$ by $0$ outside $I_r$, still denoted as $u_r$.  Use $u_r$ as a test function to the equation satisfied by $u_r$ to  get 
\[
\int^T_0\int_{\mathbb{R}} |\partial_x u_r|^p dxds \leq \frac{1}{2}\int_{I_r}u^2_0(x)dx\leq \frac{1}{2}\|u_0\|_{L^1(\mathbb{R})} \|u_0\|_{L^\infty(\mathbb{R})}, \quad \text{for } T>0 .
\]
Thus $\displaystyle \|u_x\|^p_{L^p(\mathbb{R}\times(0,T))}$ is also bounded by $\frac{1}{2}\|u_0\|_{L^1(\mathbb{R})} \|u_0\|_{L^\infty(\mathbb{R})}$. By  $(\ref{plap71f})$ and the boundedness of $u$, it follows from the Interpolation Theorem  that $u\in L^p(\mathbb{R}\times(0,T))$, for any $T>0$. Thus, we have \hspace{0.05in}
$u\in L^p(0,T ; W^{1,p}(\mathbb{R}) )$.
 \\
According to this conclusion, $(\ref{plap71f})$ and  $(\ref{plap73})$, we have from the equation of $u$ 
\[
u_t\in L^{p'}(a,b; W^{-1, p'}(\mathbb{R}))+ L^1(\mathbb{R}\times(0, T)).
\]
Then, a local argument of Lemma \ref{Lemcompactness} yields the claim $(\ref{plap73b})$.
(Note that the last conclusion does not implies $u\in\mathcal{C}([0,\infty); L^1(\mathbb{R}))$ since the proof of Theorem $1.1$, \cite{Porretta} depends on the boundedness of $\Omega$. Moreover,   the proof of $(\ref{plap47a})$ is not applicable to prove $(\ref{plap73b})$, since the solution $u$ of the Cauchy problem is constructed in a different way)
\\

Now, to prove $u\in \mathcal{C}([0,\infty); L^1(\mathbb{R}))$, it suffices to show that $u(t)$ is continuous at $t=0$ in $L^1(\mathbb{R})$, i.e
\[\displaystyle\lim_{t\rightarrow 0}\int_{\mathbb{R}} |u(x,t)-u_0(x)| dx=0,\]
and the conclusion for $t>0$ is  proved  in the same way. In fact, we have for any $m\geq 1$
\begin{align*}
&\int_{\mathbb{R}} |u(x,t)-u_0(x)| dx\leq \int_{I_m} |u(x,t)-u_0(x)| dx  + \int_{\mathbb{R}\backslash I_m} |u(x,t)-u_0(x)| dx  
\\
& \leq\int_{I_m} |u(x,t)-u_0(x)| dx + \int_{\mathbb{R}\backslash I_m} u(x,t) dx +\int_{\mathbb{R}\backslash I_m} u_0(x) dx  =
\\
&    \int_{I_m} |u(x,t)-u_0(x)| dx -\left(\int_{I_m}( u(x,t)-u_0(x)) dx\right)   + \int_{\mathbb{R}} u(x,t) dx-  \int_{I_m}u_0(x) dx +\int_{\mathbb{R}\backslash I_m} u_0(x) dx
\\
&\overset{(\ref{plap71f})}{\leq} 2\int_{I_m} |u(x,t)-u_0(x)| dx+ \int_{\mathbb{R}} u_0(x) dx-  \int_{I_m}u_0(x) dx +\int_{\mathbb{R}\backslash I_m} u_0(x) dx  =
\\
& 2\int_{I_m} |u(x,t)-u_0(x)| dx+ 2\int_{\mathbb{R}\backslash I_m} u_0(x) dx. 
\end{align*}
 Taking $\displaystyle\limsup_{t\rightarrow0}$ both sides of the indicated inequality deduces
\[
\displaystyle\limsup_{t\rightarrow0} \int_{\mathbb{R}} |u(x,t)-u_0(x)| dx \leq  2\displaystyle\limsup_{t\rightarrow0} \int_{I_m} |u(x,t)-u_0(x)| dx+ 
2\int_{\mathbb{R}\backslash I_m} u_0(x) dx.
\]
By $u\in \mathcal{C}([0,\infty); L^1_{loc}(\mathbb{R}))$, we obtain from the last inequality
\[
\displaystyle\limsup_{t\rightarrow0} \int_{\mathbb{R}} |u(x,t)-u_0(x)| dx \leq 2\int_{\mathbb{R}\backslash I_m} u_0(x) dx.
\]
Then the result follows as $m\rightarrow\infty$.   Or, we complete the proof of Theorem $\ref{theexistenceCauchy}$.
\end{proof}
\begin{remark}
It is obvious that $u\in\mathcal{C}([0,\infty); L^q(\mathbb{R}))$, for any $q\geq 1$. Thus,
$u(0)=u_0$ in $L^q(\mathbb{R})$.
\end{remark}
\bigskip

Next, we show that the quenching phenomenon still holds for any weak nonnegative solutions of the Cauchy problem $(\ref{plapCauchy})$.
\begin{theorem}\label{theextinctionCauchy}
Let $v$ be such a solution of problem $(\ref{plapCauchy})$. Then, $v$ must vanish identically after a finite time $T_0>0$. Moreover,  $T_0$ can be estimated by  $\frac{\|u_0\|^{1+\beta}_{L^\infty(\mathbb{R})}}{1+\beta}$. 
\end{theorem}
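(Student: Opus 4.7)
The plan is to reduce to the maximal solution constructed in Theorem \ref{theexistenceCauchy}, and then dominate that solution by a spatially constant super-solution built from an ODE. By the maximality assertion in Theorem \ref{theexistenceCauchy1}(i), any weak solution $v$ satisfies $v(x,t)\leq u(x,t)$ a.e., where $u$ is the solution produced as a monotone limit $u_r\uparrow u$ of the Dirichlet problems on $I_r=(-r,r)$. Hence it suffices to prove the extinction estimate $T_0\leq\|u_0\|_{L^\infty(\mathbb{R})}^{1+\beta}/(1+\beta)$ for $u$, which will follow if we establish the same bound for each $u_r$ with a constant independent of $r$.

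For each $r>0$, I would repeat the ODE-comparison argument of Theorem \ref{theextinction} but starting from $L=\|u_0\|_{L^\infty(\mathbb{R})}$ in place of the smoothing-effect bound $L(\tau,u_0)$. That is, let $\Gamma_\varepsilon$ be the solution of
\[
\Gamma_\varepsilon'(t)+g_\varepsilon(\Gamma_\varepsilon(t))=0,\qquad \Gamma_\varepsilon(0)=\|u_0\|_{L^\infty(\mathbb{R})}.
\]
Since $\Gamma_\varepsilon$ is spatially constant and $f\geq 0$, $\psi_\varepsilon\geq 0$,
\[
\partial_t\Gamma_\varepsilon-\partial_x(|\partial_x\Gamma_\varepsilon|^{p-2}\partial_x\Gamma_\varepsilon)+g_\varepsilon(\Gamma_\varepsilon)+f(\Gamma_\varepsilon)\psi_\varepsilon(\Gamma_\varepsilon)=f(\Gamma_\varepsilon)\psi_\varepsilon(\Gamma_\varepsilon)\geq 0,
\]
so $\Gamma_\varepsilon$ is a super-solution of the approximate problem $(P_\varepsilon)$ on $I_r$ with boundary and initial datum dominating those of $u_{r,\varepsilon}$. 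Lemma \ref{uniqueness} then yields $u_{r,\varepsilon}(x,t)\leq\Gamma_\varepsilon(t)$.

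I would then pass to the limits in the standard order: first $\varepsilon\to 0$, which gives $\Gamma_\varepsilon(t)\to\Gamma(t)=\bigl(\|u_0\|_{L^\infty(\mathbb{R})}^{1+\beta}-(1+\beta)t\bigr)_+^{1/(1+\beta)}$ and therefore $u_r(x,t)\leq\Gamma(t)$ on $I_r\times(0,\infty)$; then $r\to\infty$, yielding $u(x,t)\leq\Gamma(t)$ on $\mathbb{R}\times(0,\infty)$. Since $\Gamma(t)\equiv 0$ for $t\geq\|u_0\|_{L^\infty(\mathbb{R})}^{1+\beta}/(1+\beta)$, we obtain the claimed bound on $T_0$. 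Combining with $v\leq u$ from the maximality property completes the proof.

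The only delicate point is the uniform (in $r$) applicability of the comparison inequality; but since $\Gamma_\varepsilon$ is independent of $x$ and dominates $u_{0,r}$ and the Dirichlet datum $0$, Lemma \ref{uniqueness} applies on every $I_r\times(0,\infty)$ without change. All other steps are direct analogues of the corresponding arguments in Theorem \ref{theextinction}, with the key simplification that the a priori $L^\infty$-bound $\|u_0\|_{L^\infty(\mathbb{R})}$ available here replaces the time-dependent smoothing bound $L(\tau,u_0)$ used there, which is why the optimal extinction time takes the simpler form $\|u_0\|_{L^\infty(\mathbb{R})}^{1+\beta}/(1+\beta)$ (as already foreshadowed in Remark \ref{rem15}).
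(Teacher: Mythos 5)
Your reduction to the maximal solution $u$ is the problematic step. You invoke ``the maximality assertion in Theorem \ref{theexistenceCauchy1}(i)'' to get $v\leq u$, but the paper only establishes that $u$ is a maximal solution of the Cauchy problem \emph{under the additional assumption that $u_0$ has compact support} (see the opening sentence of Section 5.2: ``In general, we have no answer for the existence of a maximal solution of the Cauchy problem. However, we will show that the solution $u$ $\ldots$ is a maximal solution if the initial data has compact support,'' and Theorem \ref{themaximalsol}, which assumes $Supp(u_0)\subset I_{R_0}$). Theorem \ref{theextinctionCauchy} is stated for general $0\leq u_0\in L^1(\mathbb{R})\cap L^\infty(\mathbb{R})$, so your first step is not justified at the stated level of generality. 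The rest of your argument (the ODE comparison $u_r\leq\Gamma_\varepsilon$ on each $I_r$, followed by $\varepsilon\to 0$ and $r\to\infty$) is sound, but it only yields the extinction estimate for $u$, not for an arbitrary weak solution $v$, once the domination $v\leq u$ is unavailable.

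The paper avoids this issue by not going through the maximal solution at all. It observes that any weak solution $v$ satisfies the differential inequality
\[
\partial_t v-(|v_x|^{p-2}v_x)_x+g_\varepsilon(v)+f(v)\psi_\varepsilon(v)\leq 0\quad\text{in }\mathbb{R}\times(0,\infty),
\]
since $g_\varepsilon(v)\leq v^{-\beta}\chi_{\{v>0\}}$ and $f(v)\psi_\varepsilon(v)\leq f(v)$, and then compares $v$ \emph{directly} with the spatially constant super-solution $\Gamma_\varepsilon$ (whose initial value is $\|u_0\|_{L^\infty(\mathbb{R})}$) on $\mathbb{R}\times(0,\infty)$, using the Lipschitz continuity of $g_\varepsilon$ and the monotonicity of $f\psi_\varepsilon$. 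Letting $\varepsilon\to 0$ gives $v(x,t)\leq\Gamma(t)=\bigl(\|u_0\|_{L^\infty(\mathbb{R})}^{1+\beta}-(1+\beta)t\bigr)_+^{1/(1+\beta)}$. Your ODE-comparison ingredient is exactly the right one; the fix is simply to apply it to $v$ itself rather than routing through the constructed $u$.
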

\begin{proof}
It is not difficult to observe that 
\begin{equation}\label{plap73a}
\left\{
\begin{array}
[c]{lr}%
\partial_{t}v-(|v_{x}|^{p-2}v_{x})_{x}+ g_\varepsilon(v) + f(v)\psi_\varepsilon(v)\leq 0, &
\text{in}\hspace{0.05in}\mathbb{R}\times(0,\infty),
\\
v(x,0)=u_{0}(x), & \hspace{0.05in}\text{in}\hspace{0.05in}\mathbb{R}.
\end{array}
\right.  
\end{equation}
Remind that $g_\varepsilon(.)$ is a global Lipschitz function, while $f(.)\psi_\varepsilon(.)$ is a non-decreasing function. These facts allow us to apply  the strong comparison principle in order to get
\[
v(x,t)  \leq \Gamma_\varepsilon(t), \quad \forall (x,t)\in \mathbb{R}\times(0,\infty), 
\]
where $\Gamma_\varepsilon$ is in $(\ref{plap57a})$ with initial data $\|u_0\|_{L^\infty(\mathbb{R})}$.
\\
Letting $\varepsilon\rightarrow 0$ deduces
\[
v(x,t)  \leq \Gamma(t)=\left(\|u_0\|^{1+\beta}_{L^\infty(\mathbb{R})}-(1+\beta)t\right)_{+}^{\frac{1}{1+\beta}}  , \quad \forall (x,t)\in \mathbb{R}\times(0,\infty).
\]
This completes the proof of Theorem \ref{theextinctionCauchy}.
\end{proof}

\subsection{Existence of a maximal solution with compact support initially}
\hspace{0.2in} In general, we have no answer for  the existence of a maximal solution of the Cauchy problem. However, we will show that the solution $u$, constructed in Theorem \ref{theexistenceCauchy} is a maximal solution if the initial data has compact support.

\begin{theorem}\label{themaximalsol}
Assume that  $Supp(u_0)\subset I_{R_0}$. Then,  the solution $u$ constructed as in Theorem \ref{theexistenceCauchy} is a maximal solution of equation $(\ref{plapCauchy})$. Moreover, $Supp\left(u(t)\right)$ is bounded for all $t>0$.
\end{theorem}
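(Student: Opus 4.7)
The plan is to reduce both assertions to two ingredients: (a) finite speed of propagation for every weak solution of $(\ref{plapCauchy})$ with compactly supported initial datum, which immediately gives the bounded-support conclusion for $u$; (b) the bounded-domain maximality statement of Proposition \ref{promaxsolution}, which gives the maximality of $u$ once (a) localizes any competitor.

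For (a), I would construct an explicit compactly supported stationary super-solution. The pervasive exponent $\gamma=p/(p+\beta-1)$ suggests the ansatz $U(x)=c(R_{1}-|x|)_{+}^{\gamma}$; a direct calculation on $\{U>0\}$ gives
\[
(|U_{x}|^{p-2}U_{x})_{x}=c^{p-1+\beta}\gamma^{p-1}(\gamma-1)(p-1)\,U^{-\beta},
\]
the exponents matching precisely because $\gamma(p+\beta-1)=p$. Fix $c=c(p,\beta)$ so small that the prefactor is strictly less than $1$, and then choose $R_{1}=R_{1}(\|u_{0}\|_{L^{\infty}},R_{0},p,\beta)$ with $c(R_{1}-R_{0})^{\gamma}\geq\|u_{0}\|_{L^{\infty}(\mathbb{R})}$: this makes $U$ satisfy $-(|U_{x}|^{p-2}U_{x})_{x}+U^{-\beta}\chi_{\{U>0\}}+f(U)\geq 0$ in the weak sense (using $f\geq 0$) and $U\geq u_{0}$ on $\mathbb{R}$. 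To transfer this upper bound to the approximating solutions $u_{\varepsilon,r}$ of $(P_{\varepsilon})$ on $I_{r}$, $r>R_{1}$, I would use the inflated barrier $\widetilde{U}_{\varepsilon}=U+C\varepsilon$ on $I_{r}$: adding a pedestal with $C\varepsilon\geq 2\varepsilon$ forces $\widetilde{U}_{\varepsilon}\geq 2\varepsilon$, so that $g_{\varepsilon}(\widetilde{U}_{\varepsilon})=\widetilde{U}_{\varepsilon}^{-\beta}$ and $\psi_{\varepsilon}(\widetilde{U}_{\varepsilon})=1$, and the strict margin in the prefactor lets one verify that $\widetilde{U}_{\varepsilon}$ is a super-solution of $(P_{\varepsilon})$. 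Since $\widetilde{U}_{\varepsilon}\geq C\varepsilon>0$ on $\partial I_{r}\times(0,\infty)$ and $\widetilde{U}_{\varepsilon}(\cdot,0)\geq u_{0}$, Lemma \ref{uniqueness} yields $u_{\varepsilon,r}\leq\widetilde{U}_{\varepsilon}$; passing first $\varepsilon\to 0$ (so $\widetilde{U}_{\varepsilon}\to U$) and then $r\to\infty$ gives $u\leq U$ on $\mathbb{R}\times(0,\infty)$, whence $Supp(u(t))\subset\overline{I_{R_{1}}}$ for every $t>0$. The same argument, applied to any weak solution $v$ of $(\ref{plapCauchy})$ (which, as in the proof of Proposition \ref{promaxsolution}, is a sub-solution of each $(P_{\varepsilon})$), establishes the same finite speed of propagation for $v$.

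For maximality, let $v$ be any weak solution of $(\ref{plapCauchy})$ and fix $T>0$. Step (a) applied to $v$ gives $R_{T}>R_{0}$ with $Supp(v(\cdot,t))\subset I_{R_{T}}$ for all $t\in[0,T]$; for any $R>R_{T}$, $v$ vanishes in a neighborhood of $\partial I_{R}\times[0,T]$, so $v|_{I_{R}\times(0,T)}$ is a weak solution of the Dirichlet problem $(\ref{plap70})$ on $I_{R}$ with initial datum $u_{0}$. Proposition \ref{promaxsolution} yields $v\leq u_{R}$ on $I_{R}\times(0,T)$, and since $\{u_{r}\}_{r>0}$ is nondecreasing with $u_{r}\uparrow u$, we obtain $v\leq u$ there. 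Letting $R\to\infty$ and then $T\to\infty$ concludes $v\leq u$ on $\mathbb{R}\times(0,\infty)$. The main technical obstacle is the super-solution check for $\widetilde{U}_{\varepsilon}$ in step (a): one needs uniformity in $\varepsilon$, which hinges on the strict inequality $c^{p-1+\beta}\gamma^{p-1}(\gamma-1)(p-1)<1$ providing a fixed gap to absorb the $O(\varepsilon)$ correction coming from $\widetilde{U}_{\varepsilon}^{-\beta}-U^{-\beta}$ and from the $f\psi_{\varepsilon}$ term.
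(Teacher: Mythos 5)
Your overall architecture matches the paper exactly: first prove finite speed of propagation for every weak solution of $(\ref{plapCauchy})$ with compactly supported datum via a stationary barrier and the $(P_\varepsilon)$ comparison principle, then localize any competitor $v$ to a bounded interval and invoke Proposition \ref{promaxsolution} plus $u_r\uparrow u$. The maximality half is correct (the extra $T$-localization is harmless but unnecessary, since the support bound is uniform in time).

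However, the barrier step has a genuine gap. The vertically lifted profile $\widetilde U_\varepsilon=U+C\varepsilon$ with $U(x)=c(R_1-|x|)_+^\gamma$ is \emph{not} a super-solution of $(P_\varepsilon)$ near the free boundary $\{U=0\}$, and the strict margin $K:=c^{p-1+\beta}\gamma^{p-1}(\gamma-1)(p-1)<1$ cannot absorb the $\varepsilon$-correction there. On $\{U>0\}$ with $C\geq 2$ the super-solution condition reads
\[
g_\varepsilon(\widetilde U_\varepsilon)+f(\widetilde U_\varepsilon)\psi_\varepsilon(\widetilde U_\varepsilon)
=(U+C\varepsilon)^{-\beta}+f(\widetilde U_\varepsilon)\;\geq\; (|U_x|^{p-2}U_x)_x= K\,U^{-\beta},
\]
and since $f\geq 0$ only helps, the critical inequality is $(U+C\varepsilon)^{-\beta}\geq K U^{-\beta}$, i.e.\ $U\geq C\varepsilon/\bigl(K^{-1/\beta}-1\bigr)$. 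In the strip $0<U<C\varepsilon/\bigl(K^{-1/\beta}-1\bigr)$ near $|x|=R_1$, the right-hand side $K U^{-\beta}\sim K c^{-\beta}(R_1-|x|)^{-\gamma\beta}$ blows up while the left-hand side stays bounded by $(C\varepsilon)^{-\beta}$, so the inequality fails — and since $\gamma\beta\in(0,1)$ this failure survives integration against test functions supported near $|x|=R_1$ (the negative part $\int (R_1-x)^{-\gamma\beta}$ dominates the $O(h)$ positive part over a window of width $h\to0$). Thus the comparison $u_{\varepsilon,r}\leq\widetilde U_\varepsilon$ is not justified.

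The paper sidesteps this entirely: instead of verifying an inequality for a hand-built barrier, it takes $w_\varepsilon$ to be the \emph{exact} nonnegative solution of the $\varepsilon$-regularized stationary ODE $-\partial_x(|\partial_x w_\varepsilon|^{p-2}\partial_x w_\varepsilon)+g_\varepsilon(w_\varepsilon)=0$ on $\mathbb{R}^+$ with $w_\varepsilon(0)=\|u_0\|_\infty$ and $w_\varepsilon(\infty)=0$ (problem $(\ref{plap74})$). Being an exact stationary solution, $w_\varepsilon$ is automatically a super-solution of $(P_\varepsilon)$ (the $f\psi_\varepsilon$ term is $\geq 0$), so no inequality needs to be checked near the free boundary, and $w_\varepsilon\to w(x)=(\|u_0\|_\infty^{1/\gamma}-\sigma x)_+^\gamma$ recovers exactly the compactly supported profile you aim for, with the sharp constant $\sigma$ (equality, not strict inequality, in the stationary equation). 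Your limiting profile $U$ is correct and your algebraic computation of $(|U_x|^{p-2}U_x)_x$ is right; it is only the route through the vertical pedestal that breaks down. If you want to salvage your construction you would need a barrier that tracks the $\varepsilon$-regularized ODE near its free boundary, which is precisely what $w_\varepsilon$ does.
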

\begin{proof} First, we have the following Lemma, which refers to  the finite speed of propagation of nonnegative solutions.
\begin{lemma}\label{lemsupp} Let $v$ be a weak solution of equation $(\ref{plapCauchy})$. Then,  $v$ has compact support at all later time $t>0$. Moreover, we have 
\[
Supp \left( v(t)\right) \subset I_{m_0}, \quad\text{for any } t>0, 
\]
with $m_0=R_{0} + \displaystyle\frac{ \|u_0\|_{L^\infty(\mathbb{R})}^{\frac{1}{\gamma}} } {\sigma}$, and  $\sigma=(\displaystyle\frac{1}{\gamma^{p-1}(\gamma-1)(p-1)})^{\frac{1}{p}}$.
\end{lemma}
\begin{proof}
For any $\varepsilon>0$, let $w_{\varepsilon}$ be a nonnegative solution of
the following equation
\begin{equation}
\left\{
\begin{array}
[c]{lr}%
-\partial_x \left(|\partial_x w_{\varepsilon}|^{p-2} \partial_x w_{\varepsilon}\right)+g_{\varepsilon}(w_{\varepsilon})=0, & \text{in}\hspace{0.05in}\mathbb{R}%
^{+},\\
w_{\varepsilon}(0)=\|u_{0}\|_{L^\infty(\mathbb{R})}, \\
\displaystyle\lim_{x\rightarrow\infty}w_{\varepsilon}(x)=0. &
\end{array}
\right.  \label{plap74}%
\end{equation}
It is straight forward  that
\[
w_{\varepsilon}(x)\overset{\varepsilon\rightarrow 0}{\longrightarrow} w(x)=\left(  \|u_{0}\|_{L^\infty}^{\frac{1}{\gamma}}%
-\sigma .x\right)  _{+}^{\gamma},\quad\text{for }x>0.
\]
 If we can  show
that
\begin{equation}
v(x,t)\leq w(x-R_{0}),\quad\text{for }x> R_{0},\hspace{0.05in}t>0,
\label{plap75}
\end{equation}
then $v(x,t)=0$, for any $x\geq  m_0$, and for $t>0$. The same argument for the case $x<-R_{0}$ implies $v(x,t)=0$, for any $x\leq - m_0$, and for $t>0$, thereby proves the above Lemma.
\\

Now, we prove $(\ref{plap75})$. Recall that $v$ satisfies $(\ref{plap73a})$ in $(R_0, \infty)\times(0,\infty)$.   Moreover, we have
\[
\left\{
\begin{array}
[c]{l}%
 v (x,t)\mid_{x=R_{0}} \leq \|u_{0}\|_{L^\infty} = w_{\varepsilon}(x-R_{0}%
)\mid_{x=R_{0}},\\
\\
v(x,0)=0\leq w_{\varepsilon}(x-R_{0}),\quad\text{for }x> R_{0}.
\end{array}
\right.
\]
By comparison principle, we obtain
\[
v(x,t)\leq w_\varepsilon (x), \quad \text{for } (x, t)\in(R_0, \infty)\times(0,\infty).
\]
Letting $\varepsilon\rightarrow 0$ yields conclusion $(\ref{plap75})$. This puts an end to the proof of Lemma \ref{lemsupp}.
\end{proof}
\\

It suffices to prove that $u$ is a maximal solution of problem $(\ref{plapCauchy})$. Indeed, let $v$ be a weak solution of problem $(\ref{plapCauchy})$. Thanks to Lemma \ref{lemsupp}, 
$v\displaystyle\mid_{I_{m_0}\times(0,\infty)}$ is a  weak solution of the following problem
\[
\left\{
\begin{array}
[c]{lr}%
\partial_{t}v-(|v_{x}|^{p-2}v_{x})_{x}+v^{-\beta}\chi_{\{v>0\}}   + f(v)=0 &
\text{in}\hspace{0.05in}I_{m_0}\times(0,\infty),\\
v(-m_0,t)=v(m_0,t)=0 & t\in(0,\infty),\\
v(x,0)=u_{0}(x) & \hspace{0.05in}\text{in}\hspace{0.05in}I_{m_0}.
\end{array}
\right.  
\]
This implies that $v(x,t)\leq u_r(x,t)$, in $\mathbb{R}\times(0,\infty)$, for any $r\geq m_0$. Passing $r\rightarrow\infty$  completes the proof of Theorem \ref{themaximalsol}. 
\end{proof}
\begin{remark}
Thanks to  Lemma \ref{lemsupp},  we observe that $u_r=u$, in $\mathbb{R}\times(0,\infty)$, for any $r\geq m_0.$ Thus, considering the Cauchy problem $(\ref{plapCauchy})$  is equivalent to considering Dirichlet problem $(\ref{plap1})$  in the case of compact support initially.
\end{remark}

\subsection{Instantaneous shrinking of compact support}
\hspace{0.2in} In this section, we will show that if $f$ satisfies a certain  growth condition at infinity, see $(H_3)$,  the ISS phenomenon occurs then for any nonnegative solution of equation $(\ref{plapCauchy})$. 
It is of course that there are many functions  satisfying either   $(H_1)$ and $(H_3)$, or  $(H_2)$ and $(H_3)$.  We can take for example: $f(s)=s^q$, for some $q>0$; or  $f(s)=e^s-1$; and so forth. After that, we have the following result
\begin{theorem}\label{theISS}
Let $f$ satisfy either $(H_1)$ and $(H_3)$, or  $(H_2)$ and $(H_3)$. Assume that $u_0\in L^1(\mathbb{R})\cap L^\infty(\mathbb{R})$, and $u_0(x)$ tends to $0$ uniformly as $|x|\rightarrow\infty$. Then any nonnegative solution of equation $(\ref{plapCauchy})$ has ISS property.
\end{theorem}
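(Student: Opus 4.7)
My plan is to reduce the ISS question for $(\ref{plapCauchy})$ to the analogous one for a $p$-Laplacian equation with a sublinear polynomial absorption, which is a classical setting (cf.\ \cite{BorU, EvanKnerr, Herrero}), and then transfer the result by comparison. Since any weak solution $v$ is uniformly bounded by $\|u_0\|_{L^\infty(\mathbb{R})}$ in view of Theorem \ref{theextinctionCauchy}, I will first establish the pointwise lower bound
\[
v^{-\beta}\chi_{\{v>0\}} + f(v) \;\geq\; c\, v^{q_0}, \qquad \text{on } \mathbb{R}\times(0,\infty),
\]
for some positive constant $c = c(\beta, q_0, f)$. The proof is a case split on the value of $v$: on $(0,1]$ the singular term yields $v^{-\beta} \geq v^{q_0}$; on the range $v\geq s_0$ where $(H_3)$ applies, $f(v) \geq v^{q_0}$ is immediate; and on the bounded intermediate range $[1, s_0]$, one has $v^{-\beta} \geq s_0^{-\beta-q_0}\, v^{q_0}$ by a trivial comparison. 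It follows that $v$ is a distributional sub-solution of the reduced equation
\[
U_t - (|U_x|^{p-2}U_x)_x + c\, U^{q_0} \;=\; 0, \qquad U(\cdot, 0) = u_0.
\]

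For the reduced equation I will build a compactly supported stationary super-solution of Strauss--Bellman type,
\[
W(x; a) = \kappa\bigl(a - |x|\bigr)_+^{\nu}, \qquad \nu = \frac{p}{p-1-q_0},
\]
with $\kappa = \bigl(c/(\nu^{p-1}(\nu-1)(p-1))\bigr)^{1/(p-1-q_0)}$ chosen so that the exponent identity $(\nu-1)(p-1)-1 = \nu q_0$ makes the $p$-Laplacian balance the absorption term. Note that $q_0\in(0,1)\subset(0, p-1)$ (since $p>2$) is what guarantees $\nu$ is positive and finite, hence the profile has compact support. Using the uniform decay $u_0(x)\to 0$ as $|x|\to\infty$, I will pick $a$ large enough that $W(\cdot; a) \geq u_0$ on $\mathbb{R}$. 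An extension of the comparison principle from Lemma \ref{uniqueness} to the whole line (the proof parallels the half-line argument in Lemma \ref{lemsupp}) then gives $v(x, t) \leq W(x; a)$ pointwise, hence $\mathrm{Supp}\,(v(\cdot, t)) \subset [-a, a]$ for every $t > 0$.

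The main obstacle will be dominating $u_0$ globally by a single stationary profile: $W(\cdot; a)$ vanishes at $|x| = a$, while $u_0$ is only assumed to tend to zero uniformly (with no prescribed rate), so $u_0$ may exceed $W$ in a thin tail near $|x| = a$. I plan to resolve this either by (i) a piecewise gluing of finitely many translated Strauss--Bellman profiles, arranged according to the modulus of decay of $u_0$, or (ii) interposing a time-dependent barrier that starts large everywhere and collapses to the compactly supported profile in arbitrarily short time, the singular absorption $v^{-\beta}$ rapidly erasing any thin tail outside $[-a,a]$. Either route requires a careful verification that the resulting comparison function is a distributional super-solution, especially across gluing interfaces; once this is in place, ISS for $v$ follows immediately from the comparison.
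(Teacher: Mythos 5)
Your first reduction matches the paper's: you establish the pointwise lower bound $v^{-\beta}\chi_{\{v>0\}} + f(v)\geq c\,v^{q_0}$ (with $c = s_0^{-(\beta+q_0)}$ in your notation, $c=R_0^{-(\beta+q_0)}$ in the paper's), and in both cases this reduces the ISS question to the model equation $U_t - (|U_x|^{p-2}U_x)_x + c\,U^{q_0}=0$, $U(\cdot,0)=u_0$, by comparison. From there the paths diverge: the paper simply cites \cite{Herrero} for the ISS property of this reduced problem and concludes by comparison, whereas you undertake to reprove that ISS result directly via explicit barriers.

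This is where the gap lies. Your stationary Strauss--Bellman profile $W(x;a)=\kappa(a-|x|)_+^{\nu}$ is constructed correctly (the identity $(\nu-1)(p-1)-1=\nu q_0$ and the value of $\kappa$ check out, and $q_0<1<p-1$ does make $\nu$ positive and finite), but it cannot by itself dominate $u_0$: the profile vanishes outside $[-a,a]$, while $u_0$ is only assumed to go to zero uniformly with no prescribed rate. You acknowledge the obstacle, but neither proposed remedy is carried out, and remedy (i) as stated cannot work: a finite minimum (or any finite assembly) of compactly supported profiles is itself compactly supported, so the identical obstruction recurs; an infinite family raises new issues about whether the resulting infimum is a distributional supersolution. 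Remedy (ii) is the right idea and is essentially the standard argument (cf.\ \cite{EvanKnerr}, \cite{Herrero}): for each $\rho>0$ set $\delta(\rho)=\sup_{|x|>\rho}u_0(x)$ and use the spatially flat ODE barrier $\Gamma_\rho(t)=\bigl(\delta(\rho)^{1-q_0}-c(1-q_0)t\bigr)_+^{1/(1-q_0)}$, which quenches at time $T(\rho)=\delta(\rho)^{1-q_0}/\bigl(c(1-q_0)\bigr)$, together with a compactly supported profile anchored at $|x|=\rho$ to handle the interface, and then exploit $T(\rho)\to0$ as $\rho\to\infty$ and finite speed of propagation to conclude that $\operatorname{Supp}\bigl(u(\cdot,t)\bigr)$ is bounded for every $t>0$. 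As written your proposal only gestures at this two-stage barrier construction, so it stops short precisely at the step the paper discharges by citation.
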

\begin{proof}
Let $v$ be a solution of equation $(\ref{plapCauchy})$. 
From $(H_3)$, there is a real number $R_0>0$ large enough such that $f(s)\geq s^{q_0}$, for $s\geq R_0$. Thus, we have 
\[
v^{-\beta}\chi_{\{v>0\}} + f(v)  \geq    R^{-(\beta+q_0)}_0 . v^{q_0}, 
\]
which leads to
\[
\partial_t v  -  (|v_x|^{p-2}v_x)_x  + R^{-(\beta+q_0)}_0 . v^{q_0} \leq 0, \quad \text{in }\mathbb{R}\times(0,\infty).
\]
Let $y$ be a unique solution of the following problem 
\begin{equation*}
\left\{
\begin{array}
[c]{lr}%
\partial_{t} y-(|y_{x}|^{p-2}y_{x})_{x}+  R^{-(\beta+q_0)}_0 . y^{q_0}=0, &
\text{in}\hspace{0.05in}\mathbb{R}\times(0,\infty),
\\
y(x,0)=u_{0}(x), & \hspace{0.05in}\text{in}\hspace{0.05in}\mathbb{R},
\end{array}
\right.  
\end{equation*}
(see e.g, \cite{Herrero}, \cite{WuJingHui},  \cite{Zhao}, and \cite{Dibe}).  By the strong comparison principle, we get
\[
v(x,t)\leq y(x,t),\quad \text{in }\mathbb{R}\times(0,\infty).
\]
Moreover, $y$ has the ISS property, see \cite{Herrero}, so does $v$. This puts an end to the proof of the above Theorem. 
\end{proof}
\begin{remark}\label{remfinal}
We also note that the result of  Theorem $\ref{theextinctionCauchy}$, Theorem $\ref{themaximalsol}$, and Theorem $\ref{theISS}$ still hold for the case where $f$ is a 
global Lipschitz function, and $f(0)=0$. 
\end{remark}

\section{Appendix}
{\bf Proof of Lemma \ref{uniqueness}}:
A subtraction between two equations satisfied by $v_1$ and $v_2$ gives us
\[
\partial_t(v_1-v_2) - \partial_x  \left( | \partial_x v_1|^{p-2}\partial_x v_1 -  | \partial_x v_2|^{p-2}\partial_x v_2\right) + g_{\varepsilon}(v_1)- g_{\varepsilon}(v_2) + f\psi_\varepsilon(v_1) -  f\psi_\varepsilon(v_2) \leq 0.
\] 
Multiplying both sides of the above equation with the test function $T_1(w)$, $w=  (v_1-v_2)_+$; and using integration by parts yield 
\begin{align*}
&\int_{I} S_1 (w(x,t)) dx + \int^{t}_{\tau}\int_{I} \left( | \partial_x v_1|^{p-2}\partial_x v_1 -  | \partial_x v_2|^{p-2}\partial_x v_2 \right)\left(\partial_x T_1(w)\right) dxds  +
\\
 & \int^{t}_{\tau}\int_{I}\left( g_{\varepsilon}(v_1)- g_{\varepsilon}(v_2)\right). T_1(w) dxds  + \int^{t}_{\tau}\int_{I}\left(f\psi_\varepsilon(v_1) -  f\psi_\varepsilon(v_2) \right). T_1(w) dxds\leq \int_{I} S_1 (w(x,\tau)) dx,
\end{align*}
for $t>\tau>0$.
It follows from the monotone of $p$-Laplacian, and the monotone of $f\psi_\varepsilon$, 
and the fact that $g_\varepsilon$ is a global Lipschitz function
\[
\int_{I} S_1 (w(x,t)) dx  \leq C(\varepsilon) \int^{t}_{\tau}\int_{I} |v_1-v_2|T_1(w) dxds+ \int_{I} S_1 (w(x,\tau)) dx,
\]
where $C(\varepsilon)>0$ is the Lipschitz constant of $g_\varepsilon$. Letting $\tau\rightarrow0$ in the above inequality deduces
\[
\int_{I} S_1 (w(x,t)) dx  \leq C(\varepsilon) \int^{t}_{0}\int_{I} |v_1-v_2|T_1(w) dxds.
\]
\\
In addition, we have
\[
|v_1-v_2|T_1(w)(x,t)  \leq 2  S_1 (w(x,t)).
\] 
Inserting this fact into the indicated inequality yields
\[
\int_{I} S_1 (w(x,t)) dx  \leq 2C(\varepsilon) \int^{t}_{0}\int_{I}  S_1 (w(x,t)) dxds.
\]
Then, we arrive to the following ordinary differential equation 
\[
\left\{
\begin{array}
[c]{lr}%
\frac{d}{dt} y(t) \leq 2C(\varepsilon)y(t), \quad  t>0,  \\
\\
y(0)=0.
\end{array}
\right.
\]
with \[y(t)=  \int_{I} S_1(w(x,t))dx .\]
It follows from Gronwall's lemma that
\[
y(t)=0,\quad \forall t>0,
\]
which implies 
\[
w(t)=0, \quad\forall t>0.
\] 
In other words, we get the above lemma.
\begin{remark} The result of Lemma \ref{uniqueness} also holds for  any sub-solution $v_1$ and super-solution $v_2$ of equation $(\ref{plap17})$ satisfying $v_2 \geq v_1$ on the boundary.
\end{remark}
\bigskip

\end{document}